\numberwithin{equation}{section}
\theoremstyle{plain}
\newtheorem{theorem}{Theorem}[section]
\newtheorem{lemma}[theorem]{Lemma}
\newtheorem{corollary}[theorem]{Corollary}
\newtheorem{proposition}[theorem]{Proposition}
\theoremstyle{definition}
\newtheorem{definition}[theorem]{Definition}
\newtheorem{example}[theorem]{Example}
\newtheorem{obs}{Observation}
\theoremstyle{remark}
\newtheorem{remark}[theorem]{Remark}
\newtheorem{case[theorem]}{Case}
\def \R{{\mathbb R}}
\def \F{{\mathcal F}}
\def \Z{{\mathbb Z}}
\def \T{{\mathbb T}}
\def\G{{\mathbb G}}
\def\H{{\mathbb H}}
\def\norm#1.#2.{\lVert#1\rVert_{#2}}
\def\R{\mathbb R}
\def \V{{\mathcal V}}
\def \W{{\mathcal W}}
\def \G{{\mathcal G}}
\def \H{{\mathcal H}}
\def \HS{{\mathcal{HS}}}
\title[Localization Operator and Weyl Transform on $\G$]
{Localization Operator and Weyl Transform on Reduced Heisenberg Group with Multi-dimensional Center
}
\author{Aparajita Dasgupta}\address{Aparajita Dasgupta  \endgraf Department of Mathematics	\endgraf Indian Institute of Technology Delhi	\endgraf New Delhi - 110 016, India.} \email{adasgupta@maths.iitd.ac.in}
\author{Santosh Kumar Nayak}\address{Santosh Kumar Nayak  \endgraf Department of Mathematics	\endgraf Indian Institute of Technology Delhi	\endgraf New Delhi - 110 016, India.} \email{nayaksantosh212@gmail.com, mathnayak@gmail.com}
\keywords{Representation theory, Reduced Heisenberg group with multidimensional center, Square-integrable representations, Localization operators, Weyl transforms} \subjclass{Primary 47G10, 47G30, Secondary 42C40}
\date{\today}
\begin{document}
	\thanks{The research of A. Dasgupta was supported by MATRICS Grant(RP03933G), Science and Engineering Research Board (SERB), DST, India and S. K. Nayak is supported by Institute fellowship.
	}
	
	\maketitle

	\allowdisplaybreaks
	
	\begin{abstract}
		In this article, we study two different types of operators, the localization operator and Weyl transform, on the reduced Heisenberg group with multidimensional center $\G$. The group $\G$ is a quotient group of non-isotropic Heisenberg group with multidimensional center $\H^m$ by its center subgroup.  Firstly, we define the localization operator using a wavelet transform on $\G$ and obtain the product formula for the localization operators. 
		Next, we define the Weyl transform associated to the Wigner transform on $\G$ with the operator-valued symbol. Finally, we have shown that the Weyl transform is not only a bounded operator but also a compact operator when the operator-valued symbol is in $L^p,1\leq p\leq 2,$ and it is an unbounded operator when $p>2$.
	\end{abstract}

	
	\section{Introduction}
	In	\cite{Dau1}, I. Daubechies  introduced the localization operator using  the short-time Fourier transform.
	These operators are used to localize the signal on a time-frequency plane to get  information about the position and frequency in signal analysis, image processing, etc., (see \cite{Dau2,Dau1}). The localization operators are pseudo-differential operators associated with the so called anti-wick symbols and their applications can be found in the framework of PDEs and quantum mechanics, \cite{Mari,Kohn}.
	In \cite{wong2,Liu} Wong et al. studied localization operators corresponding to wavelet transforms and proved that under suitable conditions on the symbols the operators are paracommutators, paraproducts and Fourier multipliers.
	

	Recall that the symbol of the composition of pseudo-differential operators on $\R^n$ can be expressed in terms of the asymptotic expansion. Similar kind of results for localization operators are obtained in \cite{Cor}, 
	whereas in \cite{Du}, Wong et al., expressed the symbol of the product of localization operators in terms of some new convolution using the Weyl transform on $\R^n$. 
	
	Motivated by \cite{Du, Cor}, first we establish the localization operator with respect to the wavelet transform on the reduced Heisenberg group with multidimensional center, $\G$, and obtain the product formula of two localization operators. We expressed the product formula using a new convolution among the corresponding symbols.  To obtain this result the important tool is the $k$-Weyl transform which has been extensively studied in  \cite{shahla}.
	%

	The classical Weyl transform was first introduced by H. Weyl in \cite{Weyl} to solve the quantization problem in quantum mechanics.  The Weyl transform  has been broadly used in the field of  PDEs, harmonic analysis, time-frequency analysis, etc., (see \cite{Simon,stien}). It is one of the powerful tool in solving problems like regularity, ellipticity, spectral asymptotic, etc., the reader can follow the book \cite{wong1} for their applications.
	
	Let $\sigma$ be a suitable function on $\R^{2n}$. Then the classical Weyl transform $\W_{\sigma}$ of $\sigma$ is an operator on the Schwartz space $S(\R^n)$  defined by $$(\W_{\sigma}f)(x)=(2\pi)^{-n}\int_{\R^{2n}}e^{i(x-y)\cdot \xi}\sigma(\frac{x+y}{2},\xi)f(y)dyd\xi,\quad f\in S(\R^n).$$
	Also we can represent the Weyl transform, $\W_{\sigma}$, in terms of Wigner transform, for any $f,g\in S(\R^n)$
	
	$$\langle\W_{\sigma}f,g\rangle_{L^2(\R^n)}=(2\pi)^{-n/2}\langle \W(f,g),\overline{\sigma}\rangle_{L^2(\R^{2n})},$$ where 
	$$\W(f,g)(x,\xi)=(2\pi)^{-n/2}\int_{\R^{n}}e^{-i\xi\cdot p}f(x+p/2)\overline{g(x-p/2)}dp,$$ for all $ x,y\in\R^n.$  Here we recall the following theorem, \cite{wong1}.
	\begin{theorem}
		Let $\sigma$ be in $L^2(\R^{2n})$. Then the Weyl transform $\W_{\sigma}:L^2(\R^n)\to L^2(\R^n)$ is a Hilbert-Schmidt operator.
	\end{theorem}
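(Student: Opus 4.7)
The plan is to realize $\W_\sigma$ as an integral operator on $L^2(\R^n)$ and identify its kernel as (up to a constant and a change of variables) a partial inverse Fourier transform of $\sigma$, so that Plancherel's theorem reduces the Hilbert--Schmidt norm of $\W_\sigma$ to the $L^2$ norm of $\sigma$.

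First I would argue for $\sigma$ in the Schwartz space $\S(\R^{2n})$ where Fubini and the oscillatory integrals cause no issues. Writing
\begin{equation*}
(\W_\sigma f)(x) \;=\; \int_{\R^n} K_\sigma(x,y)\, f(y)\, dy, \qquad K_\sigma(x,y) \;=\; (2\pi)^{-n}\!\int_{\R^n} e^{i(x-y)\cdot\xi}\, \sigma\!\left(\tfrac{x+y}{2},\xi\right) d\xi,
\end{equation*}
the kernel is, up to the normalization $(2\pi)^{-n/2}$, the inverse Fourier transform of $\sigma$ in its second variable evaluated at the point $(\frac{x+y}{2},\,x-y)$.

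Next, I would perform the change of variables $u=\frac{x+y}{2},\ v=x-y$, which has Jacobian $1$, to decouple the two groups of variables:
\begin{equation*}
\int_{\R^{2n}} |K_\sigma(x,y)|^2\, dx\, dy \;=\; (2\pi)^{-n} \int_{\R^{2n}} \bigl|(\F_2^{-1}\sigma)(u,v)\bigr|^2 du\, dv,
\end{equation*}
where $\F_2^{-1}$ denotes the inverse Fourier transform in the second variable. Applying Plancherel's theorem in $v$ (with $u$ as a parameter) and then Fubini gives
\begin{equation*}
\|K_\sigma\|_{L^2(\R^{2n})}^2 \;=\; (2\pi)^{-n}\, \|\sigma\|_{L^2(\R^{2n})}^2.
\end{equation*}
Since an integral operator on $L^2(\R^n)$ with kernel in $L^2(\R^{2n})$ is Hilbert--Schmidt with $\|T\|_{\HS}=\|K\|_{L^2}$, this proves the bound for Schwartz symbols.

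Finally, I would extend to arbitrary $\sigma \in L^2(\R^{2n})$ by density: the map $\sigma\mapsto \W_\sigma$ is a linear isometry (up to the constant $(2\pi)^{-n/2}$) from the dense subspace $\S(\R^{2n})\subset L^2(\R^{2n})$ into the Hilbert space $\HS(L^2(\R^n))$ of Hilbert--Schmidt operators, hence extends uniquely to a bounded map on all of $L^2(\R^{2n})$, and the extension agrees with the distributional definition of $\W_\sigma$. The main (though minor) obstacle is precisely this justification of the interchange of integrals and the well-definedness of the operator when the oscillatory integral defining $K_\sigma$ only converges in a distributional sense; this is handled cleanly by approximation from the Schwartz class.
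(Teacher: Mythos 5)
Your proof is correct. Note that the paper itself does not prove this statement; it merely recalls it from Wong's book \cite{wong1}. Your argument --- identifying the kernel $K_\sigma(x,y)=(2\pi)^{-n/2}(\F_2^{-1}\sigma)\left(\tfrac{x+y}{2},x-y\right)$, changing variables $(x,y)\mapsto\left(\tfrac{x+y}{2},x-y\right)$ with unit Jacobian, and applying Plancherel in the second variable to get $\|\W_\sigma\|_{\HS}=(2\pi)^{-n/2}\|\sigma\|_{L^2(\R^{2n})}$, then extending by density from $S(\R^{2n})$ --- is the standard proof and is exactly the technique the authors themselves use later for the kernel $N_k(x,p)=(2\pi)^{n/2}\widehat{f^k}\left(-B_k\left(\tfrac{x+p}{2}\right),p-x\right)$ in their Plancherel theorem for $\G$, so it is fully consistent with the paper's methods.
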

	Assume $\sigma\in L^p(\R^{2n}), 1\leq p\leq 2$, then the Weyl transform $\W_{\sigma}:L^2(\R^n)\to L^2(\R^n)$ is a compact operator. But for each $2< p< \infty$, there exists a symbol $\sigma\in L^p(\R^{2n})$ such that the Weyl transform $\W_{\sigma}:L^2(\R^n)\to L^2(\R^n)$ is not bounded operator. One can refer Wong's book \cite{wong1} for the proofs .  
	
	The above results for Weyl transform associated with the Wigner transform have been also studied in other settings like the Heisenberg group \cite{Peng}, upper half plane \cite{Peng1}, quaternion Heisenberg group \cite{Chen}. In \cite{Rachdi,Zhao}, authors defined the Weyl transform associated to Wigner transform using spherical mean operator and also studied their properties.  In Heisenberg group the group center, $Z(\H)$, coincide with the set
	$\{(0,0,t):t\in\R\}$. Here the periodicity in $t\in\R$ of the Schr\"{o}dinger representation, $\rho_{\lambda}$ can be inconvenient in the context of some applications. It is neither faithful nor square integrable. Therefore it is useful to occasionally use the so called reduced Heisenberg group,
	$$\H_{\text{red}}=\H/\{(0,0,t):t\in\R\}.$$ Here we consider the reduced Heisenberg group with multidimensional center,  denoted by $\G$. Our aim is to study the Weyl transform using the operator-valued symbol on the reduced Heisenberg group $\G$, and then investigate all aforementioned properties of Weyl transform for this group $\G$.

	Apart from the introduction the paper  is organized as follows. In Section \ref{s2}, we describe the square-integrable representation, group Fourier transform, Plancherel formula and inverse Fourier transform. In Section \ref{s3}, we define the localization operator and also find their connection with the $k$-Weyl transform. Then obtain the product formula of the localization operators. In the final section, we define the Weyl transform on the reduced Heisenberg group with multidimensional center associated to the Wigner transform and characterize  symbol spaces in terms of boundedness and unboundedness of the Weyl transform(operator).
	\section{Harmonic Analysis on Reduced Heisenberg group with multi-dimensional center}\label{s2}
	Let us fix $(c_1,c_2,\cdots,c_n)$ in $\R^n$. The nonisotropic Heisenberg group on $\R^n\times\R^n\times\R$ is defined by
	\begin{equation}\label{grouplaw}
		(x,y,t)\ast (x^{\prime},y^{\prime},t^{\prime})=\Big(x+x^{\prime},y+y^{\prime},t+t^{\prime}+\frac{1}{2}\sum_{j=1}^n c_j(x_jy_j^{\prime}-x_j^{\prime}y_j)\Big),
	\end{equation} for all $(x,y,t),(x^{\prime},y^{\prime},t^{\prime})$ in $\R^n\times \R^n\times \R$. Put $c_j=1$ for all $1\leq j\leq n$. Then we get the ordinary Heisenberg group (see \cite{thang}), with one dimensional center, 
	$Z=\{(0,0,t)\in \R^n\times\R^n\times\R:t\in\R\}$. One can observe that the group operation in \eqref{grouplaw} does not contain $x_ix_j^{\prime},i\neq j$. In \cite{shahla}, the author introduced the terms $x_ix_j^{\prime},i\neq j$, in \eqref{grouplaw} in a suitable manner by taking some orthogonal matrices in the group operation \eqref{grouplaw}. More precisely, the group is defined on $\R^n\times\R^n\times\R^m$
	with the operation 
	$$(x,y,t)\ast (x^{\prime},y^{\prime},t^{\prime})=\Big(x+x^{\prime},y+y^{\prime},t+t^{\prime}+\frac{1}{2}[z,z^{\prime}]\Big),$$ for all $(x,y,t),(x^{\prime},y^{\prime},t^{\prime})$ in $\R^n\times \R^n\times \R^m$, where $z=(x,y),z^{\prime}=(x^{\prime},y^{\prime})\in \R^{2n}$. Here $[z,z^{\prime}]$ in $\R^m$, is defined by 
	$$[z,z^{\prime}]_j=x^{\prime}\cdot B_j y-x\cdot B_j y^{\prime}, j=1,2,\cdots,m,$$ 
	where $B_1,B_2,\cdots,B_m$ are $n\times n$ orthogonal matrices such that 
	$$B_j^{-1}B_k=-B_k^{-1}B_j,j\neq k.$$
	We  denote the group by $\H^m$, and the dimension of the center $Z$, given by
	$$Z=\{(0,0,t)\in \R^n\times\R^n\times\R^m:t\in\R^m\},$$ is $m$. This group  $\H^m$ is known as nonisotropic Heisenberg group with multi-dimensional center. The relation between dimensions $n$ and $m$ is $m\leq n^2$, \cite[Proposition 2]{shahla}.
	The irreducible, unitary representations of $\H^m$, namely the Schr\"odinger representations on $L^2(\R^n)$, are given by 
	\begin{equation}
		\pi_{\lambda}(q,p,t)\varphi(x)=e^{i\lambda\cdot t}e^{iq\cdot B_{\lambda}(x+p/2)}\varphi(x+p), x\in\R^n,0\neq\lambda\in\R^{m\ast},
	\end{equation}
	for all $\varphi\in L^2(\R^n)$ and $(q,p,t)\in \R^n\times \R^n\times \R^m,$ where $B_{\lambda}=\lambda_1B_1+\cdots +\lambda_mB_m,$ and $\lambda=(\lambda_1,\lambda_2,\cdots,\lambda_m)\in\R^m$. It is easy to check that $$B_{\lambda}B_{\lambda}^{t}=|\lambda|^2I,$$ where $I$ is the identity $n\times n$ matrix, and $|\lambda|=\sqrt{\lambda_1^2+\cdots+\lambda_m^2}$. In particular $\det B_{\lambda}=|\lambda|^n$.
	
	Let $H=\{(0,0,2\pi t):t\in\Z^m\}$. It can be easily shown that, this is a normal subgroup of $\H^m$ under the operation $\ast$. Now define the quotient group $\G={\H^m}/{H}$ with the same operation $\ast$, $$(x,y,t)\ast (x^{\prime},y^{\prime},t^{\prime})=\Big(x+x^{\prime},y+y^{\prime},t+t^{\prime}+\frac{1}{2}[z,z^{\prime}]\Big),$$ where the coordinates of  $t+t^{\prime}+\frac{1}{2}[z,z^{\prime}]$ are modulo $2\pi$. We call the group, $\G=\R^n\times\R^n\times \T^m$, the reduced Heisenberg group with multidimensional center, where $\T^m$ is the $m$-dimensional torus. 
	
	The one-dimensional representations of the nonisotropic Heisenberg group $\H^m$ are still representations
	of $\G$ but the Schr\"odinger representations, $\pi_{\lambda}$ of $\H^m$ are  representations
	of $\G$ only when $0\neq\lambda\in\Z^m$.  For better understanding we refer \cite{thang}. These representations are relevant to the Plancherel theorem.

	\begin{definition}[Fourier transform]
		Let $f$ be in $L^1(\G)$	and $k\in \Z^{m\ast}$. Define the group Fourier transform $(\F f)(k)$ of $f$ at $k$	by
		$$(\F f)(k)\varphi=\int_{\G}f(q,p,t)\pi_k(q,p,t)\varphi d\mu(q,p,t),\quad \varphi\in L^2(\R^n),$$ where $d\mu(q,p,t)=\dfrac{dqdpdt}{(2\pi)^m}$.
	\end{definition}
	For $k\in \Z^{m}$, let us denote
	$$f^k(q,p):=\int_{\T^m}e^{ik\cdot t}f(q,p,t)\frac{dt}{(2\pi)^m},$$
	the inverse Fourier transform of $f$ in $t=(t_1,t_2,\cdots,t_m)$ on $\T^m$. So the group Fourier transform can be rewritten $$(\F f)(k)=\int_{\R^{2n}}f^k(q,p)\pi_{k}(q,p)dqdp,$$ where $\pi_k(q,p)\varphi(x)=e^{iq\cdot B_{k}(x+p/2)}\varphi(x+p), x\in\R^n.$
	Next we prove the Plancherel formula for $\G$.
	\begin{theorem}[Plancherel Theorem]\label{Plancherel}
		Let $f$ be in $L^2(\G)$ and $k\in\Z^{m\ast}$. Then $(\F f)(k):L^2(\R^n)\to L^2(\R^n)$ is a Hilbert-Schmidt operator and
		\begin{equation}
			\sum_{k\in\Z^{m\ast}}\|(\F f)(k)\|_{\HS}^2(2\pi)^{-n}\|k\|_2^n=\|f-f^{0}\|_{L^2(\G)}^2,
		\end{equation}
		where $\|k\|_2=\left(|k_1|^2+\cdots+|k_m|^2\right)^{1/2},$ and $$f^{0}(q,p,t)=\int\limits_{\T^m}f(q,p,t)\dfrac{dt}{(2\pi)^m}.$$ Furthermore, for $f,g\in L^2(\G)$, we have
		$$\sum_{k\in\Z^{m\ast}}\operatorname{tr}\left[(\F f)(k)(\F g)(k)^{\ast}\right](2\pi)^{-n}\|k\|_2^n=\langle f-f^{0},g-g^0\rangle_{L^2(\G)}.$$ 
	\end{theorem}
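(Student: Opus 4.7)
The strategy is to reduce the claim to the $k$-Weyl Plancherel identity on $\R^{2n}$ via a partial Fourier expansion in the toral variable. Since $f \in L^2(\G)$ may be viewed as an element of $L^2(\R^{2n} \times \T^m)$ with the normalized measure $dq\,dp\,dt/(2\pi)^m$, and $\{e^{-ik\cdot t}\}_{k\in\Z^m}$ is an orthonormal basis of $L^2(\T^m, dt/(2\pi)^m)$, the Fourier coefficients of $f(q,p,\cdot)$ are exactly the functions $f^k(q,p)$ introduced earlier. Parseval on $\T^m$ followed by integration in $(q,p)$ gives
\[
\|f\|_{L^2(\G)}^2 \;=\; \sum_{k\in\Z^m} \|f^k\|_{L^2(\R^{2n})}^2,
\]
so isolating the $k=0$ mode yields $\|f-f^0\|_{L^2(\G)}^2 = \sum_{k\in\Z^{m\ast}} \|f^k\|_{L^2(\R^{2n})}^2$, using that $f^0$ is independent of $t$ and $\int_{\T^m} dt/(2\pi)^m = 1$.

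Next, observe that $(\F f)(k) = \int_{\R^{2n}} f^k(q,p)\, \pi_k(q,p)\, dq\,dp$ is precisely the $k$-Weyl transform $W_k(f^k)$ studied in \cite{shahla}. I would then invoke, or directly verify, the Plancherel identity
\[
\|W_k(F)\|_{\HS}^2 \;=\; (2\pi)^n\,\|k\|_2^{-n}\,\|F\|_{L^2(\R^{2n})}^2, \qquad 0\ne k\in \Z^m,\ F\in L^2(\R^{2n}).
\]
To prove it directly, I would compute the Schwartz kernel of $W_k(F)$ through the substitution $y = x+p$; the $q$-integration then identifies the kernel as $K_F(x,y) = (\mathcal{F}_1 F)\bigl(-B_k(x+y)/2,\, y-x\bigr)$, where $\mathcal{F}_1$ denotes the Fourier transform in the first $\R^n$ argument. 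Changing variables $u=(x+y)/2$, $v=y-x$ (unit Jacobian) and then $w=-B_k u$ (Jacobian $|\det B_k|=\|k\|_2^n$, thanks to $B_kB_k^{t}=|k|^2 I$), followed by the ordinary Plancherel formula in $\R^n$, delivers the identity.

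Multiplying by $(2\pi)^{-n}\|k\|_2^n$ and summing over $k\in\Z^{m\ast}$ then collapses the left-hand side to $\sum_{k\in\Z^{m\ast}} \|f^k\|_{L^2(\R^{2n})}^2$, which by the first step equals $\|f-f^0\|_{L^2(\G)}^2$; this is the first identity. The polarized version follows either by applying the first identity to the linear combinations $f\pm g$ and $f\pm ig$, or more directly from the sesquilinear form of the $k$-Weyl Plancherel relation, $\operatorname{tr}\bigl[W_k(F) W_k(G)^{\ast}\bigr] = (2\pi)^n\|k\|_2^{-n}\langle F,G\rangle_{L^2(\R^{2n})}$, which is proved by the same kernel computation.

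The one genuinely non-routine ingredient is the $k$-Weyl Plancherel identity itself, whose proof rests crucially on the orthogonality relation $B_kB_k^t = \|k\|_2^2 I$ inherited from the hypotheses $B_j^{-1}B_k=-B_k^{-1}B_j$: it is exactly this that makes the change of variable $w = -B_k u$ legitimate and produces the correct power $\|k\|_2^n$ needed to match the weight in the Plancherel sum. Once that identity is in hand, everything else is bookkeeping built on Parseval's theorem on $\T^m$.
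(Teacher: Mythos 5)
Your proposal is correct and follows essentially the same route as the paper: the paper likewise computes the Schwartz kernel of $(\F f)(k)$ as a partial Fourier transform of $f^k$ evaluated at $\left(-B_k\left(\frac{x+p}{2}\right),p-x\right)$, uses the change of variables and $|\det B_k|=\|k\|_2^n$ to get $\|(\F f)(k)\|_{\HS}^2=(2\pi)^n\|k\|_2^{-n}\|f^k\|_{L^2(\R^{2n})}^2$, and then applies Parseval on $\T^m$ to identify $\sum_{k\neq 0}\|f^k\|^2$ with $\|f-f^0\|_{L^2(\G)}^2$. The only differences are cosmetic (order of the two steps, and your explicit framing of the kernel identity as the $k$-Weyl Plancherel relation).
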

	\begin{proof}Let $\varphi$ be in $L^2(\R^n)$. Then by using the Fourier transform of $\R^n$, we get
		\begin{align*}
			(\F f)(k)\varphi(x)&=\int_{\R^{2n}}f^k(q,p)\pi_{k}(q,p)\varphi(x)dqdp\\
			&=\int_{\R^{2n}}f^k(q,p)e^{iq\cdot B_k(x+\frac{p}{2})}\varphi(x+p)dqdp\\
			&=\int_{\R^{n}}\left(\int_{\R^{n}}f^k(q,p-x)e^{iq\cdot B_k(\frac{x+p}{2})}dq\right)\varphi(p)dp\\
			&=\int_{\R^{n}}N_k(x,p)\varphi(p)dp,
		\end{align*}
		where $$N_k(x,p)=\int_{\R^{n}}f^k(q,p-x)e^{iq\cdot B_k(\frac{x+p}{2})}dq=(2\pi)^{n/2}\widehat{f^k}\left(-B_k\left(\frac{x+p}{2}\right),p-x\right).$$ By using the Plancherel theorem for the Fourier transform on $\R^{2n}$, we obtain
		\begin{align*}
			\|(\F f)(k)\|_{\HS}^2&=\|N_k(x,p)\|_{L^2(\R^{2n})}^2\\
			&=(2\pi)^n\int_{\R^{2n}}\left|\widehat{f^k}\left(-B_k\left(\frac{x+p}{2}\right),p-x\right)\right|^2dxdp\\
			&= (2\pi)^n\int_{\R^{2n}}\left|\widehat{f^k}\left(x,p\right)\right|^2 \|k\|_2^{-n}dxdp\\
			&=(2\pi)^n\|k\|_2^{-n}\|f^k\|_{L^2(\R^{2n})}^2<\infty.
		\end{align*}
		Using the Plancherel formula for $\T^m$, we get
		\begin{align}
			\sum_{k\in\Z^{m\ast}}\|(\F f)(k)\|_{\HS}^2(2\pi)^{-n}\|k\|_2^n&= \sum_{k\in\Z^{m\ast}}\|f^k\|^2_{L^2(\R^{2n})}\nonumber\\
			&=\int_{\R^{2n}}\sum_{k\in\Z^{m\ast}}|f^k(q,p)|^2dqdp\label{p}\\
			&=\int_{\R^{2n}}\sum_{k\in\Z^{m}}|h^k(q,p)|^2dqdp\nonumber \\
			&=\int_{\R^{2n}}\int_{\T^m}|h(q,p,t)|^2\dfrac{dqdpdt}{(2\pi)^m}=\|f-f^0\|_{L^2(\G)}^2,
		\end{align}
		where $h(q,p,t)=f(q,p,t)-f^0(q,p,t)$, and then $h\in L^2(\G)$ and 
		$$h^k(q,p)=
		\begin{cases}
			f^k(q,p), &  \text{for}\quad k\neq 0\\
			0, & \text{for} \quad k=0.
		\end{cases}$$
	\end{proof}
	\begin{remark}
		If we add $\int\limits_{\R^{2n}}|f^0(q,p)|^2dqdp$ in \eqref{p}, we obtain the exact analogue of Plancherel formula
		\begin{equation}
			\sum_{k\in\Z^{m\ast}}\|(\F f)(k)\|_{\HS}^2(2\pi)^{-n}\|k\|_2^n+\int_{\R^{2n}}|\widehat{(f^0)}(x,y)|^2dxdy=\int_{\G}|f(q,p,t)|^2\frac{dqdpdt}{(2\pi)^m}.
		\end{equation}
	\end{remark}
	In the next theorem we derive the inverse Fourier transform.
	\begin{theorem}[Inverse Fourier transform]
		Let $f$ be in the Schwartz space $S(\G)$. Then the inversion formula is given by
		$$f(q,p,t)=\sum_{k\in\Z^{m\ast}}\operatorname{tr}\left(\pi_{k}(q,p,t)^{\ast}(\F f)(k)\right)(2\pi)^{-n}\|k\|_2^n+f^0(q,p,t).$$
		Moreover,
		\begin{align*}
			f(q,p,t)=\sum_{k\in\Z^{m\ast}}\operatorname{tr}\left(\pi_{k}(q,p,t)^{\ast}(\F f)(k)\right)(2\pi)^{-n}\|k\|_2^n+(2\pi)^{-n}\int_{\R^{2n}}e^{i(q\cdot x+q\cdot y)}\widehat{(f^0)}(x,y)dxdy.
		\end{align*}
	\end{theorem}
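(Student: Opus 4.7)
The plan is to recover $f$ from $\F f$ by combining two complementary inversions: a Fourier series expansion in the torus variable $t \in \T^m$ to extract the partial coefficients $f^k(q,p)$, together with, for each nonzero $k$, a trace identity that recovers $f^k(q,p)$ from the operator $(\F f)(k)$.

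Since $f \in S(\G)$, the coefficients $f^k(q,p)$ decay rapidly in $k$, so Fourier inversion on $\T^m$ gives the absolutely convergent expansion $f(q,p,t) = f^0(q,p,t) + \sum_{k\in\Z^{m*}} e^{-ik\cdot t} f^k(q,p)$. Using $\pi_k(q,p,t)^* = e^{-ik\cdot t}\pi_k(q,p)^*$, the theorem reduces to proving, for each $k \in \Z^{m*}$, the scalar identity
\[
f^k(q,p) = (2\pi)^{-n}\|k\|_2^n\,\operatorname{tr}\bigl(\pi_k(q,p)^*(\F f)(k)\bigr).
\]

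To establish this identity I would reuse the kernel representation of $(\F f)(k)$ already derived inside the proof of Theorem \ref{Plancherel}, namely that $(\F f)(k)$ is the integral operator on $L^2(\R^n)$ with kernel $N_k(x,y) = (2\pi)^{n/2}\,\widehat{f^k}(-B_k(x+y)/2,\,y-x)$, where the hat denotes Euclidean Fourier transform in the first slot. Since $\pi_k(q,p)^*\psi(y) = e^{-iq\cdot B_k(y-p/2)}\psi(y-p)$, the composite operator $\pi_k(q,p)^*(\F f)(k)$ is itself an integral operator, with kernel $K(y,y') = e^{-iq\cdot B_k(y-p/2)} N_k(y-p, y')$. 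Computing the trace as $\int K(y,y)\,dy$, substituting $N_k(y-p,y) = (2\pi)^{n/2}\widehat{f^k}(-B_k(y-p/2),p)$, and performing the change of variables $\xi = -B_k(y-p/2)$ (for which $|\det B_k| = \|k\|_2^n$) converts the $y$-integral into the inverse Fourier transform of $\widehat{f^k}(\cdot, p)$ evaluated at $q$, producing exactly $(2\pi)^n\|k\|_2^{-n} f^k(q,p)$, as required.

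Assembling the two steps yields the main inversion formula. The moreover part is immediate: since $f \in S(\G)$, the average $f^0$ lies in $S(\R^{2n})$, so the standard Euclidean Fourier inversion on $\R^{2n}$ applies. The main technical subtlety I anticipate is justifying that $\pi_k(q,p)^*(\F f)(k)$ is trace-class and that its trace genuinely equals $\int K(y,y)\,dy$, together with the interchange of the sum in $k$ with the trace; all of these follow from the Schwartz hypothesis on $f$, which makes $N_k$ Schwartz in $(x,y)$ and rapidly decreasing in $k$, so that every operator involved is trace-class and every integral and sum converges absolutely.
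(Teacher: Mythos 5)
Your proof is correct and follows essentially the same route as the paper: both reduce the theorem to the trace identity $\operatorname{tr}\left(\pi_{k}(q,p,t)^{\ast}(\F f)(k)\right)=(2\pi)^n\|k\|_2^{-n}e^{-ik\cdot t}f^k(q,p)$, obtained by integrating an explicit integral kernel over the diagonal, and then resum the Fourier series in $t$ on $\T^m$. The only difference is organizational --- the paper realizes $\pi_k(q,p,t)^{\ast}(\F f)(k)$ as $(\F g_{(q,p,t)})(k)$ for a twisted translate $g_{(q,p,t)}$ and reuses the kernel formula from the Plancherel theorem, whereas you compose $\pi_k(q,p)^{\ast}$ directly with the kernel $N_k$ of $(\F f)(k)$; you are in fact more explicit than the paper about justifying the trace-class and diagonal-trace steps.
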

	\begin{proof}
		For all $(q,p,t)$ in $\G$ and $k\in\Z^{m\ast}$,
		\begin{equation}\label{g}
			\pi_k(q,p,t)^{\ast}(\F f)(k)=\int_{\T^m}\int_{\R^{2n}}g(q^{\prime},p^{\prime},t^{\prime})\pi_k(q^{\prime},p^{\prime},t^{\prime})d\mu(q^{\prime},p^{\prime},t^{\prime}),
		\end{equation} where $g_{(q,p,t)}(q^{\prime},p^{\prime},t^{\prime})=e^{-i\frac{k}{2}\cdot[z,z^{\prime}]}f(q^{\prime}+q,p^{\prime}+p,t^{\prime}+t)$ and $z=(q,p),z^{\prime}=(q^{\prime},p^{\prime}).$ Then \eqref{g} can rewrite as
		$$\pi_k(q,p,t)^{\ast}(\F f)(k)=(\F g_{(q,p,t)})(k).$$ The kernel of $(\F g_{(q,p,t)})(k)$ is given by
		$$N_k^{(q,p,t)}(x,y)=(2\pi)^{n/2}\widehat{g_{(q,p,t)}^k}\left(-B_k\left(\frac{x+y}{2}\right),y-x\right).$$ Also
		\begin{align*}
			\text{tr}\left(\pi_k(q,p,t)^{\ast}(\F f)(k)\right)=\int_{\R^{n}}N_k^{(q,p,t)}(x,x)dx&=(2\pi)^{n/2}\int_{\R^{n}}\widehat{g^k_{(q,p,t)}}\left(-B_k x,0\right)dx\\
			&=\int_{\R^{2n}}e^{iy\cdot B_kx}g^k_{(q,p,t)}(y,0)dy.
		\end{align*}
		On the other hand, we can find
		$$g_{(q,p,t)}^k(q^{\prime},p^{\prime})=e^{-i\frac{k}{2}\cdot[z,z^{\prime}]}e^{-ik\cdot t}f^{k}(q^{\prime}+q,p^{\prime}+p).$$ Then $$g_{(q,p,t)}^k(y,0)=e^{-i\frac{y}{2}\cdot B_kp}e^{-ik.t}f^k(y+q,p).$$
		Thus
		\begin{align*}
			\text{tr}\left(\pi_k(q,p,t)^{\ast}(\F f)(k)\right)&=\int_{\R^{2n}}e^{iy\cdot B_kx}e^{-i\frac{y}{2}\cdot B_kp}e^{-ik.t}f^k(y+q,p)dxdy\\
			&=e^{-ik.t}\int_{\R^{2n}}e^{i(y-q)\cdot B_kx}e^{-i\frac{y-q}{2}\cdot B_kp}f^k(y,p)dxdy\\
			&=e^{-ik.t} e^{iq\cdot B_k\frac{p}{2} }\int_{\R^{2n}}e^{-iq\cdot B_kx}e^{iy\cdot \left(B_kx-B_k\frac{p}{2}\right)}f^k(y,p)dxdy\\
			&=(2\pi)^{n/2}e^{-ik\cdot t} e^{iq\cdot B_k\frac{p}{2} }\int_{\R^{n}}e^{-iq\cdot B_kx}\left(\F_1^{-1}f^k\right)\left(B_kx-B_k\frac{p}{2},p\right)dx\\
			&=(2\pi)^{n/2}e^{-ik.\cdot t}\int_{\R^{n}}e^{-iq\cdot x}\left(\F_1^{-1}f^k\right)(x,p)\frac{dx}{\|k\|_2^n}\\
			&=(2\pi)^n\|k\|_2^{-n}e^{-ik\cdot t}f^k(q,p).
		\end{align*}
		By using the inverse Fourier transform on $\T^m$, we get
		\begin{align*}
			\sum_{k\in\Z^{m\ast}}\text{tr}\left[\pi_k^{\ast}(q,p,t)(\F f)(k)\right](2\pi)^{-n}\|k\|_2^n&= \sum_{k\in\Z^{m\ast}}e^{-ik\cdot t}f^k(q,p)\\
			&= \sum_{k\in\Z^m}e^{-ik\cdot t}h^k(q,p)\\
			&= h(q,p,t)=(f-f^0)(q,p,t).
		\end{align*}
	\end{proof}

	The next theorem shows that all the Schrödinger representations $\pi_k$, $k\in\Z^{m\ast}$, are square integrable representations.
	\begin{theorem}
		For $k=(k_1,k_2,\cdots,k_m)\in\Z^{m\ast}$,
		$$\pi_{k}(q,p,t)\varphi(x)=e^{ik\cdot t}e^{iq\cdot B_{k}(x+p/2)}\varphi(x+p), x\in\R^n$$
		for all $\varphi\in L^2(\R^n)$ and $(q,p,t)\in \G,$ where $B_{k}=k_1B_1+\cdots +k_mB_m,$ are square integrable representations.
	\end{theorem}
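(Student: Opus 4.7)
The plan is to verify the definition of square integrability directly by computing the matrix coefficients $\langle\pi_k(q,p,t)\varphi,\psi\rangle_{L^2(\R^n)}$ and showing that they belong to $L^2(\G,d\mu)$ for any $\varphi,\psi\in L^2(\R^n)$. From the defining formula,
\[
\langle\pi_k(q,p,t)\varphi,\psi\rangle_{L^2(\R^n)}=e^{ik\cdot t}\int_{\R^n}e^{iq\cdot B_k(x+p/2)}\varphi(x+p)\overline{\psi(x)}\,dx,
\]
and the crucial first observation is that the $t$-dependence is a pure phase, so $|\langle\pi_k(q,p,t)\varphi,\psi\rangle|$ is independent of $t\in\T^m$ and the $t$-integration against $(2\pi)^{-m}dt$ just contributes a factor of $1$.

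Next, I would perform the substitution $y=x+p/2$ to symmetrize the inner integral, introducing the auxiliary function $\Phi_p(y):=\varphi(y+p/2)\overline{\psi(y-p/2)}$ so that the $x$-integral becomes $\int_{\R^n}e^{iy\cdot B_k^t q}\Phi_p(y)\,dy=(2\pi)^{n/2}\widehat{\Phi_p}(-B_k^t q)$. Then, using that $B_kB_k^t=\|k\|_2^2 I$ and hence $|\det B_k|=\|k\|_2^n$, the change of variable $\xi=-B_k^t q$ followed by the Plancherel theorem in the $y$-variable yields
\[
\int_{\R^{2n}}|\langle\pi_k(q,p,t)\varphi,\psi\rangle|^2\,dq\,dp=(2\pi)^n\|k\|_2^{-n}\int_{\R^{2n}}|\varphi(y+p/2)|^2|\psi(y-p/2)|^2\,dy\,dp.
\]

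Finally, the measure-preserving change of variables $u=y+p/2,\ v=y-p/2$ (with unit Jacobian) decouples the integrand into a product, giving the explicit orthogonality-type identity
\[
\int_{\G}|\langle\pi_k(q,p,t)\varphi,\psi\rangle|^2\,d\mu(q,p,t)=(2\pi)^n\|k\|_2^{-n}\|\varphi\|_{L^2(\R^n)}^2\|\psi\|_{L^2(\R^n)}^2<\infty.
\]
Taking $\psi=\varphi\neq 0$ then witnesses the square integrability of $\pi_k$ for each $k\in\Z^{m\ast}$, and simultaneously identifies the formal degree as $(2\pi)^{-n}\|k\|_2^n$, consistent with the Plancherel Theorem \ref{Plancherel}. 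The computation involves no genuine obstacle; the main care required is bookkeeping of the normalization constants and the Jacobian $\|k\|_2^{-n}$ produced by the invertibility of $B_k$, both of which are already handled in the proof of the Plancherel formula above.
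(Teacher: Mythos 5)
Your proposal is correct and follows essentially the same route as the paper: integrate out the $t$-phase over $\T^m$, recognize the remaining $x$-integral as a Euclidean Fourier transform evaluated at $B_k^t q$, apply Plancherel in $q$ with the Jacobian $|\det B_k|^{-1}=\|k\|_2^{-n}$, and then integrate out $p$ to obtain a finite constant times $\|\varphi\|_{L^2(\R^n)}^4$. The only (harmless) differences are cosmetic: you symmetrize with $y=x+p/2$ and allow a general second vector $\psi$, which yields the full orthogonality relation and the formal degree $(2\pi)^{-n}\|k\|_2^{n}$, whereas the paper takes $\psi=\varphi$ and works with the unsymmetrized product $\varphi(x)\overline{\varphi(x+p)}$.
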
 
	\begin{proof} Using the Plancherel theorem for $\R^n$, we get
		\begin{align*}
			&\int_{\G}|\langle\varphi,\pi_{k}(q,p,t)\rangle_{L^2(\R^n)}|^2 dpdqdt\\
			&=\int_{[0,2\pi]^m}\int_{\R^n}\int_{\R^n}\Big\lvert\int_{\R^n}\varphi(x)e^{-ik\cdot t}e^{-iq\cdot B_{k}(x+p/2)}\overline{\phi(x+p)}dx\Big\rvert^2dpdqdt\\
			& =(2\pi)^m\int_{\R^n}\int_{\R^n}\Big\lvert\int_{\R^n}\varphi(x)e^{-iq\cdot B_{k}(x+p/2)}\overline{\phi(x+p)}dx\Big\rvert^2dpdq\\
			& = (2\pi)^m\int_{\R^n}\int_{\R^n}\Big\lvert\int_{\R^n}(\varphi T_{-p}\overline{\varphi})(x)e^{-i B_{k}^{t}(q)\cdot x}e^{-iB_{k}(p/2)}dx\Big\rvert^2dpdq\\
			&=  (2\pi)^{m+n}\int_{\R^n}\int_{\R^n}\Big\lvert\widehat{(\varphi T_{-p}\overline{\varphi})}( B_{k}^{t}q)\Big\rvert^2dpdq\\
			& =|\det(B_{k})|^{-1}(2\pi)^{m+n}\|\varphi\|_{L^2(\R^n)}^4<\infty,
		\end{align*}
		where $(\varphi T_{-p}{\varphi})(x)=\varphi(x)\varphi(x+p).$
	\end{proof}
	
	Let $f,g$ be in $L^2(\R^n)$. Recall the $k$-Fourier Wigner transform $\V_k(f,g)$ of $f$ and $g$ on $\R^n\times\R^n$, are defined by
	$$\V_k(f,g)(q,p)=(2\pi)^{-n/2}\langle\pi_k(q,p)f,g\rangle_{L^2(\R^n)}.$$ We can relate the ordinary Fourier Wigner transform $\V(f,g)$ with the $k$-Fourier Wigner transform $\V_k(f,g)$ by
	$$\V_k(f,g)(q,p)=\V(f,g)(B_k^{t}q,p).$$
	Then the $k$-Wigner transform $\W^k(f,g)$ of $f$ and $g$ is defined as the Fourier transform of $k$-Fourier Wigner transform, i.e.,
	$$\W^k(f,g)=\widehat{\V_k(f,g)},$$ for all $f,g\in L^2(\R^n)$. 
	Again one can relate the $k$-Wigner transform $\W^k(f,g)$ with ordinary Wigner transform $\W(f,g)$ by
	$$\W^k(f,g)(x,\xi)=\|k\|_2^n\W(f,g)\left(\frac{1}{|k|^2}B_k^t x,\xi\right),$$ for all $x,\xi\in \R^n$ and $f,g\in L^2(\R^n)$.
	
	Let $\sigma$ be in the Schwartz space, $S(\R^n\times\R^n)$, of $\R^n\times\R^n$. Then the $k$-Weyl transform $\W^k_{\sigma}$, of $\sigma$ associated to the $k$-Wigner transform $\W^k$, is defined by
	$$\langle \W_{\sigma}^{k}f,g\rangle_{L^2(\R^n)}=(2\pi)^{-n/2}\int_{\R^{2n}}\sigma(x,\xi)\W^k(f,g)(x,\xi)dxd\xi,$$ for all $f,g\in S(\R^n).$ The $k$-Weyl transform $\W^k_{\sigma}$ can also be written as
	$$\langle \W_{\sigma}^{k}f,g\rangle_{L^2(\R^n)}=(2\pi)^{-n/2}\int_{\R^{2n}}\widehat{\sigma}(q,p)\V_k(f,g)(q,p)dqdp,$$ for all $f,g\in S(\R^n),$ using the Parseval identity for the Euclidean Fourier transform.
	Furthermore, the $k$-Weyl transform $\W_{\sigma}^k$ can also be expressed as
	$$(\W_{\sigma}^kf)(x)=(2\pi)^{-n}\int_{\R^{2n}}\widehat{\sigma}(q,p)\pi_k(q,p)f(x)dqdp.$$
	
	For detailed study of the $k$-Fourier Wigner transform, the $k$-Wigner transform and the $k$-Weyl transform, we refer to \cite{shahla}.
	
	Next we express the group Fourier transform $\F f$ in terms of $k$-Weyl transform. 
	
	\begin{proposition}
		Let $f\in L^1(\G)$. Then for all $k\in\Z^{m\ast}$
		$$(\F f)(k)=(2\pi)^n \W^k_{{\widecheck{(f^k)}}},$$ where $\widecheck{(f^k)}$ is the inverse Fourier transform of $f^k$ on $\R^{2n}$.
	\end{proposition}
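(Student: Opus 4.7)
The proof is essentially a direct identification of two integral representations. The plan is to compare the expression for $(\F f)(k)$ in terms of the partial Fourier coefficient $f^k$ with the third representation of the $k$-Weyl transform listed in the excerpt, namely
\[
(\W_{\sigma}^{k}g)(x)=(2\pi)^{-n}\int_{\R^{2n}}\widehat{\sigma}(q,p)\pi_{k}(q,p)g(x)\,dq\,dp.
\]

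First I would recall, from the rewriting of the group Fourier transform just after the definition, that
\[
(\F f)(k)=\int_{\R^{2n}}f^{k}(q,p)\,\pi_{k}(q,p)\,dq\,dp,
\]
understood as an operator-valued integral acting on $\varphi\in L^{2}(\R^{n})$. Next I would set $\sigma:=\widecheck{(f^{k})}$, the Euclidean inverse Fourier transform on $\R^{2n}$, and invoke Fourier inversion to obtain $\widehat{\sigma}=f^{k}$. Substituting into the formula for $\W_{\sigma}^{k}$ above and reading off the result yields
\[
\W_{\widecheck{(f^{k})}}^{k}g(x)=(2\pi)^{-n}\int_{\R^{2n}}f^{k}(q,p)\pi_{k}(q,p)g(x)\,dq\,dp=(2\pi)^{-n}(\F f)(k)g(x),
\]
which is the claimed identity after multiplying both sides by $(2\pi)^{n}$.

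The only genuinely delicate point is ensuring that the two integrals actually coincide as bounded operators on $L^{2}(\R^{n})$, and that the normalization constants in the Euclidean Fourier transform used in defining $\widecheck{\,\cdot\,}$ are consistent with those used implicitly in the $k$-Weyl formula. I would therefore briefly check that $f\in L^{1}(\G)$ implies $f^{k}\in L^{1}(\R^{2n})$ (via Fubini on the compact torus factor $\T^{m}$), so both integrals converge absolutely and can be compared pointwise on $\varphi\in L^{2}(\R^{n})$, justifying Fourier inversion in the sense of tempered distributions if necessary. This is the main — and essentially only — obstacle, and it is routine once the convention $\widehat{\widecheck{h}}=h$ adopted in the excerpt is pinned down; no representation-theoretic input beyond what is already established is needed.
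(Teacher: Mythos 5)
Your proof is correct: the paper itself gives no argument for this proposition (it simply defers to the cited reference), and your direct identification of $(\F f)(k)=\int_{\R^{2n}}f^k(q,p)\pi_k(q,p)\,dq\,dp$ with $(2\pi)^n$ times the representation $(\W_{\sigma}^{k}g)(x)=(2\pi)^{-n}\int_{\R^{2n}}\widehat{\sigma}(q,p)\pi_{k}(q,p)g(x)\,dq\,dp$ via $\widehat{\widecheck{(f^k)}}=f^k$ is exactly the intended one-line computation. Your closing remark --- that $f\in L^1(\G)$ gives $f^k\in L^1(\R^{2n})$ by Fubini on $\T^m$, and that the Fourier-inversion step must be read consistently with the paper's normalization (distributionally if $\widecheck{(f^k)}$ is not itself integrable) --- correctly isolates the only point requiring care.
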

	\begin{proof}
		For proof, we refer to \cite{shahla}.
	\end{proof}
	\section{Localization Operator on Reduced Heisenberg group with Multi-dimensional Center}\label{s3}
	The resolution of identity is given by
	$$\langle f,g\rangle_{L^2(\R^n)}=\frac{1}{c_{\varphi}}\int_{\G}\langle f,\pi_{k}(q,p,t)\varphi\rangle_{L^2(\R^n)}\langle\pi_{k}(q,p,t)\varphi,g\rangle_{L^2(\R^n)}dqdpdt,$$ for all $f,g$ in $L^2(\R^n)$ and an admissible wavelet $\varphi\in L^2(\R^n)$. For any admissible wavelet $\varphi$, it is easy to see that $$c_{\varphi}=(2\pi)^{m+1}\|k\|_2^{-n}.$$ The localization operator is given by
	$$\langle L_{\sigma,\varphi}^{k}f,g\rangle_{L^2(\R^n)}=\frac{1}{c_{\varphi}}\int_{\G}\sigma(q,p,t)\langle f,\pi_{k}(q,p,t)\varphi\rangle_{L^2(\R^n)}\langle\pi_{k}(q,p,t)\varphi,g\rangle_{L^2(\R^n)}dqdpdt,$$ for all $f,g$ in $L^2(\R^n)$ and an admissible wavelet $\varphi\in L^2(\R^n)$. Now put $\sigma(q,p,t)=F(q,p)$ and $\pi_{k}(q,p,t)=e^{ik\cdot t}\pi_{k}(q,p)$, where $\pi_{k}(q,p)\varphi(x)=e^{iq\cdot B_{k}(x+p/2)}\varphi(x+p).$ Then the localization operator with the symbol $F$ becomes
	\begin{align*}
		\langle L_{\sigma,\varphi}^{k}f,g\rangle_{L^2(\R^n)}
		& =\frac{1}{c_{\varphi}}\int_{\G}F(q,p)\langle f,\pi_{k}(q,p,t)\varphi\rangle_{L^2(\R^n)}\langle\pi_{k}(q,p,t)\varphi,g\rangle_{L^2(\R^n)}dqdpdt\\
		& =(2\pi)^{-m-n}\|k\|_{2}^n (2\pi)^m \int_{\R^{2n}}F(q,p)\langle f,\pi_{k}(q,p)\varphi\rangle_{L^2(\R^n)}\langle\pi_{k}(q,p)\varphi,g\rangle_{L^2(\R^n)}dqdp\\
		& = (2\pi)^{-n}\|k\|_{2}^n \int_{\R^{2n}}F(q,p)\langle f,\pi_{k}(q,p)\varphi\rangle_{L^2(\R^n)}\langle\pi_{k}(q,p)\varphi,g\rangle_{L^2(\R^n)}dqdp
	\end{align*} 
	Hence we have the localization operator as, 
	\begin{equation}\label{Localization}
		(L^{k}_{F,\varphi}f)(x)=(2\pi)^{-n}\|k\|_{2}^n \int_{\R^{2n}}F(q,p)\langle f,\pi_{k}(q,p)\varphi\rangle_{L^2(\R^n)}\pi_{k}(q,p)\varphi(x)dqdp, x\in\R^n.
	\end{equation} Let $f_{k,x}(q,p)=\langle f,\pi_{k}(q,p)\varphi\rangle_{L^2(\R^n)}\pi_{k}(q,p)\varphi(x)$, with $ \varphi(x)=2^{\frac{n}{4}}e^{-\frac{|x|^2}{2}}$ and $k \in\Z^{m\ast}$.
	\begin{lemma}
		The Fourier transform of $f_{k,x}(q,p)$ is
		$$\widehat{f_{k,x}}(\xi,\eta)=\dfrac{(2\pi)^{n/2}}{\det B_{k}}\times \pi_{k}\Big(B_{k}(\frac{\eta}{\|k\|_2^2}),-B_{k}^{-1}\xi\Big)f(x)\V^{k}(\varphi,\varphi)\Big(-B_{k}(\frac{\eta}{\|k\|_2^2}),B_{k}^{-1}\xi\Big).$$
	\end{lemma}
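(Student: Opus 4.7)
The plan is to compute $\widehat{f_{k,x}}$ directly from the definition and then reorganize the result to exhibit the claimed factorization. First I would substitute the explicit formula $\pi_k(q,p)\varphi(y)=e^{iq\cdot B_k(y+p/2)}\varphi(y+p)$ into the product defining $f_{k,x}$. The two exponential phases coming from the inner product and from the pointwise evaluation combine cleanly, and after expanding one obtains
$f_{k,x}(q,p)=\int_{\R^n} f(y)\,e^{iq\cdot B_k(x-y)}\,\varphi(x+p)\,\overline{\varphi(y+p)}\,dy.$

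Next I would take the Fourier transform in $(q,p)$ and perform the $q$-integration first. This gives $(2\pi)^n\delta(B_k(x-y)-\xi)$, which pins $y=x-B_k^{-1}\xi$ (possible since $\det B_k=\|k\|_2^n\neq 0$) and contributes a Jacobian factor $1/\|k\|_2^n$. What remains is
$\widehat{f_{k,x}}(\xi,\eta)=\frac{1}{\det B_k}\,f(x-B_k^{-1}\xi)\int_{\R^n}\varphi(x+p)\,\overline{\varphi(x-B_k^{-1}\xi+p)}\,e^{-ip\cdot\eta}\,dp.$

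Finally I would recognize the right-hand side. Using the identity $B_k^tB_k=\|k\|_2^2\,I$, the inner product $(B_k\eta/\|k\|_2^2)\cdot B_k u$ collapses to $\eta\cdot u$; this yields $\pi_k(B_k(\eta/\|k\|_2^2),-B_k^{-1}\xi)f(x)=e^{i\eta\cdot(x-B_k^{-1}\xi/2)}f(x-B_k^{-1}\xi)$ and, after the affine change of variable $u=y+B_k^{-1}\xi/2$ inside the defining integral of $\V^k$, reduces $\V^k(\varphi,\varphi)(-B_k(\eta/\|k\|_2^2),B_k^{-1}\xi)$ to $(2\pi)^{-n/2}\int_{\R^n} e^{-i\eta\cdot u}\varphi(u+B_k^{-1}\xi/2)\overline{\varphi(u-B_k^{-1}\xi/2)}\,du$. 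Multiplying these two factors by the prefactor $(2\pi)^{n/2}/\det B_k$ and substituting $p=u-x+B_k^{-1}\xi/2$ to return to the $p$-integration reassembles exactly the expression from step two, completing the identification.

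The only delicate point, and the main obstacle, is bookkeeping: the $p/2$-shift built into $\pi_k$ must redistribute symmetrically as $\pm B_k^{-1}\xi/2$ between the $f$- and the $\V^k$-factors after the delta function has fixed $y=x-B_k^{-1}\xi$, and all phases and Fourier-normalization constants must align. No deep harmonic analysis is needed beyond the identity $\int e^{iq\cdot\alpha}dq=(2\pi)^n\delta(\alpha)$ and the orthogonality relation $B_k^tB_k=\|k\|_2^2\,I$.
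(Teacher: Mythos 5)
Your proposal is correct and follows essentially the same route as the paper: both arrive at the identical intermediate expression $\dfrac{1}{\det B_{k}}f(x-B_{k}^{-1}\xi)\displaystyle\int_{\R^n}e^{-ip\cdot\eta}\varphi(x+p)\overline{\varphi(x-B_{k}^{-1}\xi+p)}\,dp$ and then identify the two factors using $B_{k}^{t}B_{k}=\|k\|_2^2 I$ exactly as in the paper. The only difference is one of rigor at the $q$-integration: the paper inserts a Gaussian damping factor $e^{-\epsilon^{2}|q|^{2}/2}$, recognizes the result as a convolution with an approximate identity, and passes to the limit by dominated convergence, whereas you invoke $\int e^{iq\cdot\alpha}dq=(2\pi)^n\delta(\alpha)$ formally (the integrand is not absolutely integrable in $q$, so the paper's regularization is precisely the justification your step needs).
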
 
	\begin{proof}
		We will show  $\lim_{\epsilon\to 0}I_{\epsilon}(\xi,\eta)=\widehat{f_x(\xi,\eta)}$, where 	$$I_{\epsilon}(\xi,\eta) =(2\pi)^{-n}\int_{\R^{2n}}e^{-i(q\cdot\xi+p\cdot\eta)}e^{-\epsilon^2\frac{|q|^2}{2}}\langle f,\pi_{k}(q,p)\varphi\rangle_{L^2(\R^n)}\pi_{k}(q,p)\varphi(x)dqdp.$$
		Now 
		\begin{align*}
			I_{\epsilon}(\xi,\eta) &=(2\pi)^{-n}\int_{\R^{2n}}e^{-i(q\cdot\xi+p\cdot\eta)}e^{-\epsilon^2\frac{|q|^2}{2}}\langle f,\pi_{k}(q,p)\varphi\rangle_{L^2(\R^n)}\pi_{k}(q,p)\varphi(x)dqdp\\
			&= (2\pi)^{-n}\int_{\R^{2n}}\Big(\int_{\R^n}e^{-iq\cdot(\xi+B_{k}(y+p/2)-B_{k}(x+p/2))}e^{-\epsilon^2\frac{|q|^2}{2}} dq\Big)e^{-ip\cdot\eta}f(y)\overline{\varphi(y+p)}\varphi(x+p)dydp\\
			&=(2\pi)^{-n/2}\int_{\R^{2n}}\epsilon^{-n}e^{\frac{|\xi+B_{k}(y-x)|^2}{2\epsilon^2}}e^{-ip\cdot\eta}f(y)\overline{\varphi(y+p)}\varphi(x+p)dydp\\
			&= (2\pi)^{-n/2}\int_{\R^{n}}e^{-ip\cdot\eta}\varphi(x+p)\Big(\int_{\R^{n}}\epsilon^{-n}e^{\frac{|\xi+B_{k}(y-x)|^2}{2\epsilon^2}}f(y)\overline{\varphi(y+p)}dy\Big)dp\\
			&= (2\pi)^{-n/2}\int_{\R^{n}}e^{-ip\cdot\eta}\varphi(x+p)(F_p\ast \psi_{\epsilon})(x-B_{k}^{-1}\xi)dp,
		\end{align*}
		where $\psi(x)=e^{\frac{-|B_{k}x|^2}{2}}, \psi_{\epsilon}(x)=\epsilon^{-n}\psi(\frac{x}{\epsilon}),F_p(y)=f(y)\overline{\varphi(y+p)}$. Now  $$F_p\ast\psi_{\epsilon}\rightarrow F_p\int_{\R^{n}}\psi(x)dx=F_p\int_{\R^{n}}e^{\frac{-|B_{k}x|^2}{2}}dx=\dfrac{(2\pi)^{n/2}}{\det(B_{k})}F_p,$$ as $\epsilon\rightarrow 0$. By Lebesgue dominated convergence theorem, 
		$$\lim_{\epsilon\to 0}I_{\epsilon}(\xi,\eta)=\dfrac{1}{\det(B_{k})}\int_{\R^{n}}e^{-ip\cdot\eta}\varphi(x+p)F_p(x-B_{k}^{-1}\xi)dp.$$
		Now
		\begin{align*}
			\widehat{f_x(\xi,\eta)} &=\dfrac{f(x-B_{k}^{-1}\xi)}{\det(B_{k})}\int_{\R^n}e^{-ip\cdot\eta}\varphi(x+p)\overline{\varphi(x-B_{k}^{-1}\xi+p)}dp\\
			&=\dfrac{f(x-B_{k}^{-1}\xi)}{\det(B_{k})}\int_{\R^n}e^{-i(p-x)\cdot\eta}\varphi(p)\overline{\varphi(p-B_{k}^{-1}\xi)}dp\\
			&=\dfrac{e^{ix\cdot\eta}f(x-B_{k}^{-1}\xi)}{\det(B_{k})}\int_{\R^n}e^{-ip\cdot\eta}\varphi(p)\overline{\varphi(p-B_{k}^{-1}\xi)}dp\\
			&= \dfrac{e^{ix\cdot\eta}f(x-B_{k}^{-1}\xi)}{\det(B_{k})}\int_{\R^n}\exp\Big({-i\big(p+B_{k}^{-1}\dfrac{\xi}{2}\big)\cdot\eta}\Big)\varphi\Big(p+\dfrac{B_{k}^{-1}\xi}{2}\Big)\overline{\varphi\Big(p-B_{k}^{-1}\frac{\xi}{2}\Big)}dp\\
			&= (2\pi)^{n/2}\dfrac{f(x-B_{k}^{-1}\xi)}{\det(B_{k})}e^{i \eta\cdot (x-B_{k}^{-1}\frac{\xi}{2})}\V_{k}(\varphi,\varphi)\Big(-B_{k}(\frac{\eta}{\|k\|_2^2}),B_{k}^{-1}\xi\Big)\\
			&= \dfrac{(2\pi)^{n/2}}{\det B_{k}}\times \pi_{k}\Big(B_{k}(\frac{\eta}{\|k\|_2^2}),-B_{k}^{-1}\xi\Big)f(x)\V_{k}(\varphi,\varphi)\Big(-B_{k}(\frac{\eta}{\|k\|_2^2}),B_{k}^{-1}\xi\Big).
		\end{align*}
	\end{proof}
	Let $F$ and $G$ be functions in $L^2(\R^{2n})$. Then the $k$-twisted convolution of $F$ and $G$ denoted by $F\ast_{k}G$ on $\R^{2n}$, is defined by $$F\ast_{k}G(\xi,\eta)=\int_{\R^{2n}}F(\xi-q,\eta-q)G(q,p)e^{\frac{i}{2}k\cdot\left[(\xi,\eta),(q,p)\right]}dqdp,$$ 
	where $\left[(\xi,\eta),(q,p)\right]_j=q\cdot B_j \eta-\xi\cdot B_j p, \quad j=1,2,\cdots,m.$
	\begin{theorem}(see \cite{shahla})\label{productweyl}
		Let $\sigma$ and $\tau$ be in $L^2(\R^{2n})$. Then $$\W_{\sigma}^{k}\W_{\tau}^{k}=\W_{\gamma}^{k},$$ where $\widehat{\gamma}=(2\pi)^{-n}(\widehat{\sigma}\ast_{k}\widehat{\tau}).$
	\end{theorem}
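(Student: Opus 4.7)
The plan is to unwind both $k$-Weyl transforms via the integral representation
\[
(\W_\sigma^k f)(x) = (2\pi)^{-n}\int_{\R^{2n}} \widehat{\sigma}(q,p)\,\pi_k(q,p)f(x)\,dq\,dp
\]
recorded at the end of Section~\ref{s2}, and then to collapse the resulting four-fold integral into a two-fold one whose kernel is exactly the $k$-twisted convolution. I would first restrict to $\sigma,\tau\in\S(\R^{2n})$ so that all integrals converge absolutely and Fubini applies freely; the $L^2$ statement will be recovered at the end by density.

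Inserting both integral representations into the composition gives
\[
(\W_\sigma^k\W_\tau^k f)(x) = (2\pi)^{-2n}\int_{\R^{2n}}\int_{\R^{2n}} \widehat{\sigma}(q,p)\,\widehat{\tau}(q',p')\,\pi_k(q,p)\pi_k(q',p')f(x)\,dq\,dp\,dq'\,dp'.
\]
The key ingredient is the multiplication rule for the Schr\"odinger-type representations,
\[
\pi_k(q,p)\,\pi_k(q',p') = e^{i\frac{k}{2}\cdot[(q,p),(q',p')]}\,\pi_k(q+q',p+p'),
\]
which is immediate from the group law $\ast$ on $\H^m$ together with the identity $\pi_k(q,p,t)=e^{ik\cdot t}\pi_k(q,p)$.

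Next I would change variables $\xi=q+q'$, $\eta=p+p'$, pulling $\pi_k(\xi,\eta)f(x)$ outside and leaving an inner integral in $(q,p)$. A short computation yields the antisymmetry $[(q,p),(\xi-q,\eta-p)] = -[(\xi,\eta),(q,p)]$ for the bracket used in the paper, and a further substitution $q\mapsto\xi-q$, $p\mapsto\eta-p$ inside the inner integral flips the bracket back and converts it into exactly $(\widehat{\sigma}\ast_k\widehat{\tau})(\xi,\eta)$. One then reads off
\[
(\W_\sigma^k\W_\tau^k f)(x) = (2\pi)^{-n}\int_{\R^{2n}} \bigl[(2\pi)^{-n}(\widehat{\sigma}\ast_k\widehat{\tau})(\xi,\eta)\bigr]\pi_k(\xi,\eta)f(x)\,d\xi\,d\eta,
\]
which is $(\W_\gamma^k f)(x)$ with $\widehat{\gamma} = (2\pi)^{-n}\,\widehat{\sigma}\ast_k\widehat{\tau}$, as claimed.

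To extend to $\sigma,\tau\in L^2(\R^{2n})$ I would use that $\sigma\mapsto\W_\sigma^k$ is continuous from $L^2$ into the Hilbert--Schmidt class by the Plancherel-type identity for the $k$-Weyl transform, so both sides of the identity depend continuously on $(\sigma,\tau)$ and the formula passes to the limit along a Schwartz approximation. The main obstacle I anticipate is the careful bookkeeping of the symplectic phase: verifying that the cocycle produced by $\pi_k(q,p)\pi_k(\xi-q,\eta-p)$ matches, after the required substitutions and the paper's sign convention, precisely the phase $e^{\frac{i}{2}k\cdot[(\xi,\eta),(q,p)]}$ appearing in the definition of $\ast_k$. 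The remaining exchanges of order are routine applications of Fubini once integrability has been secured on $\S(\R^{2n})$.
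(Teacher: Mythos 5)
The paper does not actually prove this theorem; it is quoted from \cite{shahla} with no argument supplied, so there is nothing internal to compare against. Your proposal is the standard proof and it is correct: the projective relation $\pi_k(q,p)\pi_k(q',p')=e^{\frac{i}{2}k\cdot[(q,p),(q',p')]}\pi_k(q+q',p+p')$ follows from the group law together with $\pi_k(q,p,t)=e^{ik\cdot t}\pi_k(q,p)$, and the substitution $(q,p)\mapsto(\xi-q,\eta-p)$ does convert the cocycle $\xi\cdot B_j p-q\cdot B_j\eta$ into $q\cdot B_j\eta-\xi\cdot B_j p=[(\xi,\eta),(q,p)]_j$, matching the paper's convention for $\ast_k$. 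The only step worth tightening is the density argument: continuity of $\sigma\mapsto\W^k_\sigma$ into the Hilbert--Schmidt class handles the left-hand side, but to pass the identity $\widehat{\gamma}=(2\pi)^{-n}\widehat{\sigma}\ast_k\widehat{\tau}$ to the limit you also need that the twisted convolution is bounded $L^2\times L^2\to L^2$ (true for the nondegenerate twist, unlike ordinary convolution), or equivalently you should argue that $\W^k$ is, up to a constant, an isometric bijection of $L^2(\R^{2n})$ onto $S_2$, so that the $S_2$-limit of $\W^k_{\gamma_n}$ determines $\gamma\in L^2$ and identifies it with the twisted convolution. With that remark added, the argument is complete.
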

	
	In  the next theorem, we identify the localization operator with the $k$-Weyl transform.
	\begin{theorem}\label{LocalizationWeyl}
		Let  $\varphi(x)=2^{n/4}e^{-\frac{|x|^2}{2}}$ be the admissible wavelet. Then for all $k\in\Z^{m\ast}$ and $F\in L^2(\R^{2n})$, the localization operator $L_{F,\varphi}^{k}$ can be represented as	$$L_{F,\varphi}^{k}=\W_{\sigma}^{k},$$ where $\sigma=F^{k}\ast \Lambda^{k}$, $\Lambda^{k}(q,p)=\dfrac{2^n}{\det(B_{k})}e^{-\left|\big((B_{k}^{t})^{-1}q,p\big)\right|^2},$ and  $F^{k}(q,p)=F\Big(B_{k}\dfrac{p}{\|k\|_2^2},-B_{k}^{t}\dfrac{q}{\|k\|_2^2}\Big)$.
	\end{theorem}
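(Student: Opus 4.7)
I want to match the localization operator $L^k_{F,\varphi}$ with a $k$-Weyl transform by passing to the Fourier side. Recall the canonical representation $(\W^k_\sigma f)(x)=(2\pi)^{-n}\int_{\R^{2n}}\widehat{\sigma}(q,p)\pi_k(q,p)f(x)\,dqdp$. Starting from (\ref{Localization}), the integrand is $F(q,p)f_{k,x}(q,p)$ with $f_{k,x}$ as in the preceding lemma, which tells us $\widehat{f_{k,x}}(\xi,\eta)$ contains the factor $\pi_k(B_k\eta/\|k\|_2^2,-B_k^{-1}\xi)f(x)$. So the strategy is to move to the Fourier side on $\R^{2n}$ and make a linear change of variables which turns that $\pi_k$-factor into $\pi_k(q',p')f(x)$, reducing $L^k_{F,\varphi}$ to the canonical form of a $k$-Weyl transform.

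\textbf{Step-by-step plan.} First, I apply Parseval's identity on $\R^{2n}$ to rewrite (\ref{Localization}) as $(L^k_{F,\varphi}f)(x)=(2\pi)^{-n}\|k\|_2^n\int_{\R^{2n}}\widecheck{F}(\xi,\eta)\,\widehat{f_{k,x}}(\xi,\eta)\,d\xi d\eta$, with the sign/conjugation conventions chosen so that the pairing is the natural unconjugated one (since $f_{k,x}$ is not real-valued, this step must be set up carefully). Next, I substitute the explicit formula from the preceding lemma for $\widehat{f_{k,x}}(\xi,\eta)$. I then change variables by $q'=B_k\eta/\|k\|_2^2$, $p'=-B_k^{-1}\xi$, equivalently $\eta=B_k^tq'$, $\xi=-B_kp'$, whose Jacobian is $|\det B_k|^2=\|k\|_2^{2n}$. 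After this substitution the $\pi_k$-factor becomes $\pi_k(q',p')f(x)$, and the expression takes the canonical form $(2\pi)^{-n}\int_{\R^{2n}}A(q',p')\pi_k(q',p')f(x)\,dq'dp'$, from which I read off $\widehat{\sigma}(q',p')=A(q',p')$ as a product of a transformed $F$-piece and a transformed $\V^k(\varphi,\varphi)$-piece.

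\textbf{Identifying $\sigma$ as $F^k\ast\Lambda^k$.} With the Gaussian wavelet $\varphi(x)=2^{n/4}e^{-|x|^2/2}$, a direct Gaussian integral gives $\V^k(\varphi,\varphi)(q,p)=e^{-(\|k\|_2^2|q|^2+|p|^2)/4}$, and the Gaussian factor surviving in $A$ is exactly (up to constants depending on $\det B_k$ and powers of $2$) the Fourier transform $\widehat{\Lambda^k}(q',p')$ of $\Lambda^k(q,p)=(2^n/\det B_k)e^{-|((B_k^t)^{-1}q,p)|^2}$; this is the natural bridge between the two Gaussians. Similarly, the piece in $\widecheck{F}$ becomes, after the substitution $\eta=B_tq', \xi=-B_kp'$ and using the definition $F^k(q,p)=F(B_kp/\|k\|_2^2,-B_k^tq/\|k\|_2^2)$, exactly $\widehat{F^k}(q',p')$. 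Invoking the convolution theorem on $\R^{2n}$ gives $\widehat{F^k\ast\Lambda^k}=(2\pi)^n\widehat{F^k}\widehat{\Lambda^k}$, so the total constant lines up and I can conclude $A=\widehat{F^k\ast\Lambda^k}$, i.e.\ $\sigma=F^k\ast\Lambda^k$.

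\textbf{Main obstacle.} The real work is bookkeeping: the interplay of the orthogonality relations $B_kB_k^t=B_k^tB_k=\|k\|_2^2 I$, the Jacobian $\|k\|_2^{2n}$ from the change of variables, the $(2\pi)^{n/2}/\det B_k$ prefactor in the lemma, the $\|k\|_2^n$ in (\ref{Localization}), the explicit Gaussian normalization $2^n/\det B_k$ in $\Lambda^k$, and the $(2\pi)^n$ from the convolution theorem must all combine cleanly. The subtlest verification is that $\V^k(\varphi,\varphi)$ evaluated at the substituted argument matches $\widehat{\Lambda^k}$ exactly with the prescribed normalization; the rest is routine once the change of variables is set up, but the constant-tracking is where errors are most likely to creep in.
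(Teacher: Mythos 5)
Your proposal follows essentially the same route as the paper's proof: Parseval on $\R^{2n}$, substitution of the lemma's formula for $\widehat{f_{k,x}}$, the change of variables $\eta=B_k^tq'$, $\xi=-B_kp'$ with Jacobian $\det(B_k)^2=\|k\|_2^{2n}$, identification of the transformed $F$-piece with $\widehat{F^k}$ and of the Gaussian $\V^k(\varphi,\varphi)(-q,-p)=e^{-|p|^2/4-\|k\|_2^2|q|^2/4}$ with $\widehat{\Lambda^k}$, and the convolution theorem to read off $\sigma=F^k\ast\Lambda^k$. The only caveat is the power of $2\pi$ in the convolution theorem, which you correctly flag as the place where constant-tracking must be checked against the paper's Fourier normalization.
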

	\begin{proof}
		Using \eqref{Localization} and Parseval identity for Euclidean Fourier transform, we get	
		\begin{align*}
			&(L_{F,\varphi}^{k}f)(x)\\
			& =(2\pi)^{-n}\|k\|_{2}^n \int_{\R^{2n}}F(q,p)\langle f,\pi_{k}(q,p)\varphi\rangle_{L^2(\R^n)}\pi_{k}(q,p)\varphi(x)dqdp\\
			&= (2\pi)^{-n}\|k\|_{2}^n \int_{\R^{2n}}\widecheck{F}(\xi,\eta)\widehat{f_{k,x}}(\xi,\eta)d\xi d\eta\\
			& =\dfrac{(2\pi)^{-n/2}\|k\|_{2}^n}{\det(B_{k})}\int_{\R^{2n}}\widecheck{F}(\xi,\eta)\pi_{k}\Big(B_{k}(\frac{\eta}{\|k\|_2^2}),-B_{k}^{-1}\xi\Big)f(x)\V^{k}(\varphi,\varphi)\Big(-B_{k}(\frac{\eta}{\|k\|_2^2}),B_{k}^{-1}\xi\Big)d\xi d\eta\\
			& = {(2\pi)^{-n/2}}\int_{\R^{2n}}\widecheck{F}\Big(-B_{k}p,B_{k}^{t}q\Big)\pi_{k}(q,p)f(x)\V^{k}(\varphi,\varphi)(-q,-p)\det(B_{k})^2dp dq\\
			&= {(2\pi)^{-n/2}\det(B_{k})^2}\int_{\R^{2n}}\widehat{F}(B_{k}p,-B_{k}^{t}q)\pi_{k}(q,p)f(x)\V^{k}(\varphi,\varphi)(-q,-p)dp dq.
		\end{align*}
		Let us choose
		$\varphi(x)=2^{n/4}e^{-\frac{|x|^2}{2}}$, then
		\begin{align*}
			\V^{k}(\varphi,\varphi)(-q,-p)&=(2\pi)^{-n/2}\int_{\R^{n}}e^{-iB_{k}^{t}q\cdot x}\varphi(x-p/2)\overline{\varphi(x+p/2)}dx\\
			&= (2\pi)^{-n/2}2^{n/2}\int_{\R^{n}}e^{-iB_{k}^{t}q\cdot x}e^{-\frac{1}{2}(|x-p/2|^2+|x+p/2|^2)}dx\\
			&= (2\pi)^{-n/2}2^{n/2}\int_{\R^{n}}e^{-iB_{k}^{t}q\cdot x}e^{-(|x|^2+|\frac{p}{2}|^2)}dx\\
			& =(2\pi)^{-n/2}2^{n/2}e^{-\frac{|p|^2}{4}}\int_{\R^{n}}e^{-iB_{k}^{t}q\cdot x}e^{-|x|^2}dx\\
			&=e^{-\frac{|p|^2}{4}}e^{\frac{-|B_{k}^{t}q|^2}{4}}\\
			&=e^{-\frac{|p|^2}{4}}e^{-\frac{\|k\|_2^2|q|^2}{4}}.
		\end{align*}
		Let us denote $F^{k}(q,p)=F\Big(B_{k}\dfrac{p}{\|k\|_2^2},-B_{k}^{t}\dfrac{q}{\|k\|_2^2}\Big)$. Then
		\begin{align*}
			\widehat{F^{k}}(x,y)&=(2\pi)^{-n}\int_{\R^{2n}}e^{-i(x\cdot q+y\cdot p)}F\Big(B_{k}\dfrac{p}{\|k\|_2^2},-B_{k}^{t}\dfrac{q}{\|k\|_2^2}\Big)dqdp\\
			&= (2\pi)^{-n}\int_{\R^{2n}}e^{-i\left(-x\cdot B_{k}\eta+y\cdot B_{k}^{t}\xi\right)}F(\xi,\eta)\|k\|_2^{2n}d\xi d\eta \\
			&= \|k\|_2^{2n}\widehat{F}(B_{k}y,-B_{k}^{t}x).
		\end{align*}
		Hence $\widehat{F^{k}}(q,p)=\|k\|_2^{2n}\widehat{F}(B_{k}p,-B_{k}^{t}q)$.
		
		Assume $\Lambda^{k}(q,p)=\dfrac{2^n}{\det(B_{k})}e^{-\left|\big((B_{k}^{t})^{-1}q,p\big)\right|^2}$, then we get
		$$\widehat{\Lambda^{k}}(q,p)=e^{-\frac{|p|^2}{4}-\frac{\|k\|_2^2|q|^2}{4}}.$$
		Thus the localization operator can be identified by a $k$-Weyl transform, given by 
		\begin{align*}
			(L_{F,\varphi}^{k}f)(x)&=(2\pi)^{-n/2}\int_{\R^{2n}}\widehat{F^{k}}(q,p)\pi_{k}(q,p)f(x)\widehat{\Lambda^{k}}(q,p)dqdp\\
			&= (\W_{F^{k}\ast \Lambda^{k}}^{k}f)(x).
		\end{align*}
	\end{proof}
	Define a new convolution of $F$ and $G$ in $L^2(\R^{2n})$ by
	\begin{equation}\label{newconvolution}
		(F\circledast G)(\xi,\eta)=\int_{\R^{2n}}F(\xi-q,\eta-p)G(q,p)e^{-\frac{|p|^2}{2}-\frac{\|k\|_2^2 |q|^2}{2}}\times e^{\frac{\langle \eta, p\rangle}{2}+\frac{\|k\|_2^2\langle \xi,q\rangle}{2}}\times e^{\frac{i}{2}k\cdot\left[(\xi,\eta),(q,p)\right]}dqdp,
	\end{equation}
	whenever the integral exists, where $\langle\cdot ,\cdot\rangle$ and $|\cdot|$ denote the usual inner product and norm on $\R^n$, respectively, and $\|k\|_2=\sqrt{k_1^2+\cdots+k_m^2}$ for $k_i\in\Z$.  \\
	
	In the next theorem, we express the product formula of two localization operators using the new convolution defined in \eqref{newconvolution}, and show that the product is again a localization operator.
	\begin{theorem}
		Let $F$ and $G$ be in $L^2(\R^{2n})$, and $\varphi(x)=2^{n/4}e^{-\frac{|x|^2}{2}}$. If there exists a function $H$ in $L^2(\R^{2n})$ such that the localization operator $L^k_{H,\varphi}:L^2(\R^n)\to L^2(\R^n)$ is same as the product of localization operators $L^k_{F,\varphi}:L^2(\R^n)\to L^2(\R^n)$ and $L^{k}_{G,\varphi}:L^2(\R^n)\to L^2(\R^n)$, then 
		$$\widehat{H^{k}}(\xi,\eta)=(2\pi)^{-n}\Big(\widehat{F^{k}}\circledast\widehat{G^{k}}\Big)(\xi,\eta)$$
	\end{theorem}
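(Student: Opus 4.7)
The plan is to reduce the statement to a comparison of symbols of $k$-Weyl transforms, and then to a Gaussian factorization identity. By Theorem \ref{LocalizationWeyl} I would rewrite $L^{k}_{F,\varphi}=\W^{k}_{\sigma_{F}}$, $L^{k}_{G,\varphi}=\W^{k}_{\sigma_{G}}$ and $L^{k}_{H,\varphi}=\W^{k}_{\sigma_{H}}$ with $\sigma_{F}=F^{k}\ast\Lambda^{k}$, $\sigma_{G}=G^{k}\ast\Lambda^{k}$ and $\sigma_{H}=H^{k}\ast\Lambda^{k}$. The hypothesis $L^{k}_{H,\varphi}=L^{k}_{F,\varphi}L^{k}_{G,\varphi}$ then becomes $\W^{k}_{\sigma_{H}}=\W^{k}_{\sigma_{F}}\W^{k}_{\sigma_{G}}$, and Theorem \ref{productweyl} converts this into the Fourier-side identity
\[
\widehat{\sigma_{H}}=(2\pi)^{-n}\bigl(\widehat{\sigma_{F}}\ast_{k}\widehat{\sigma_{G}}\bigr).
\]

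Next I would apply the ordinary Fourier convolution theorem on $\R^{2n}$ so that each $\widehat{\sigma_{\bullet}}$ is a constant multiple of $\widehat{\bullet^{k}}\,\widehat{\Lambda^{k}}$. Substituting into the above identity, the claim reduces (after collecting the appropriate powers of $2\pi$) to establishing
\[
\bigl(\widehat{F^{k}}\,\widehat{\Lambda^{k}}\bigr)\ast_{k}\bigl(\widehat{G^{k}}\,\widehat{\Lambda^{k}}\bigr)(\xi,\eta)=\widehat{\Lambda^{k}}(\xi,\eta)\,\bigl(\widehat{F^{k}}\circledast\widehat{G^{k}}\bigr)(\xi,\eta),
\]
so that the Gaussian $\widehat{\Lambda^{k}}(\xi,\eta)$, being nowhere zero, cancels against the corresponding factor on the left and isolates $\widehat{H^{k}}$.

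The heart of the argument, and the step I expect to be the main obstacle, is the Gaussian product identity
\[
\widehat{\Lambda^{k}}(\xi-q,\eta-p)\,\widehat{\Lambda^{k}}(q,p)=\widehat{\Lambda^{k}}(\xi,\eta)\,e^{-\frac{|p|^{2}}{2}-\frac{\|k\|_{2}^{2}|q|^{2}}{2}}\,e^{\frac{\langle\eta,p\rangle}{2}+\frac{\|k\|_{2}^{2}\langle\xi,q\rangle}{2}}.
\]
Using the closed form $\widehat{\Lambda^{k}}(q,p)=e^{-|p|^{2}/4-\|k\|_{2}^{2}|q|^{2}/4}$ obtained in the proof of Theorem \ref{LocalizationWeyl}, this is just the completion of the square $|\eta-p|^{2}+|p|^{2}=|\eta|^{2}+2|p|^{2}-2\langle\eta,p\rangle$ together with the analogous identity for the $q$-block weighted by $\|k\|_{2}^{2}$. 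Once this factorization is plugged into the $k$-twisted convolution, the remaining integrand matches exactly that of the new convolution $\circledast$ defined in \eqref{newconvolution}, and the stated formula $\widehat{H^{k}}=(2\pi)^{-n}(\widehat{F^{k}}\circledast\widehat{G^{k}})$ drops out.

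The only real risk lies in the careful tracking of $(2\pi)$-factors coming from three distinct sources: the Fourier convolution identity on $\R^{2n}$, the normalization in Theorem \ref{productweyl}, and the definition of each $\sigma_{\bullet}$ as a convolution with $\Lambda^{k}$. No analytic input beyond the Gaussian identity is needed; the proof is, at its core, substitution followed by cancellation of the strictly positive Gaussian weight $\widehat{\Lambda^{k}}(\xi,\eta)$.
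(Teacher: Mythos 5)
Your proposal is correct and follows essentially the same route as the paper: rewrite each localization operator as a $k$-Weyl transform via Theorem \ref{LocalizationWeyl}, apply the product formula of Theorem \ref{productweyl} on the Fourier side, and then use exactly the Gaussian factorization $\widehat{\Lambda^{k}}(\xi-q,\eta-p)\,\widehat{\Lambda^{k}}(q,p)=\widehat{\Lambda^{k}}(\xi,\eta)\,e^{-\frac{|p|^{2}}{2}-\frac{\|k\|_{2}^{2}|q|^{2}}{2}}e^{\frac{\langle\eta,p\rangle}{2}+\frac{\|k\|_{2}^{2}\langle\xi,q\rangle}{2}}$ before cancelling the nowhere-vanishing Gaussian. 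The paper carries out the same computation, only written with the square already completed rather than stated as a separate identity.
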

	\begin{proof}
		Here $L_{F,\varphi}^{k}L_{G,\varphi}^{k}=L_{H,\varphi}^{k}$, then by the Theorem \ref{LocalizationWeyl}, we get
		$$\W_{F^{k}\ast \Lambda^{k}}^{k}\W_{G^{k}\ast \Lambda^{k}}^{k}=\W_{H^{k}\ast \Lambda^{k}}^{k}.$$ 
		Using Theorem \ref{productweyl} and \eqref{newconvolution}, we have 
		\begin{align*}
			\widehat{H^{k}}(\xi,\eta)e^{-\frac{1}{4}(|\eta|^2+\|k\|_2^2|\xi|^2)}&=(2\pi)^{-n}\int_{\R^{2n}}\widehat{F^{k}}(\xi-q,\eta-p)e^{-\frac{1}{4}\left(|\eta-p|^2+\|k\|_2^2|\xi-q|^2\right)}\widehat{G^{k}}(q,p)\\
			&\times e^{-\frac{1}{4}\left(|p|^2+\|k\|_2^2|q|^2\right)}e^{\frac{i}{2}k\cdot\left[(\xi,\eta),(q,p)\right]}dqdp.
		\end{align*}
		Then 
		\begin{align*}
			\widehat{H^{k}}(\xi,\eta)&=(2\pi)^{-n}\int_{\R^{2n}}\widehat{F^{k}}(\xi-q,\eta-p)\widehat{G^{k}}(q,p)e^{-\frac{1}{2}\left(|p|^2+\|k\|_2^2|q|^2\right)}\\
			& \times e^{\frac{1}{2}\left(\langle \eta,p\rangle+\|k\|_2^2\langle\xi,q\rangle\right)}\times e^{\frac{i}{2}k\cdot\left[(\xi,\eta),(q,p)\right]}dqdp\\
			& =(2\pi)^{-n}\Big(\widehat{F^{k}}\circledast\widehat{G^{k}}\Big)(\xi,\eta).
		\end{align*}
	\end{proof}
	The following example shows that the new convolution $\circledast$ of two $L^2(\R^n)$ functions is not in $L^2(\R^n)$.
	\begin{example}
		For each $k\in\Z^{m\ast}$, there are two functions $F^k$ and $G^k$ in $L^2(\R^{2n})$ such that $\widehat{F^k}\circledast \widehat{G^k} $ is not in $L^2(\R^{2n})$. Let $F^k$ and $G^k$ be in $L^2(\R^{2n})$ such that 
		$$\widehat{F^k}(q,p)=e^{\frac{\|k\|_2^2|q|+|p|^2}{4}}\chi_{A}(q,p), \widehat{G^k}(q,p)=e^{\frac{-\|k\|_2^2}{2}|q|^2}\chi_B(p),$$ where $A$ and $B$ are compact subsets of $\R^{2n}$ and $\R^n$ respectively.
		Then 
		\begin{align*}
			&\widehat{F^k}\circledast\widehat{G^k}(\xi,\eta)\\
			&= \int_{\R^{2n}}\widehat{F^k}(\xi-q,\eta-p)\widehat{G^k}(q,p)e^{-\frac{|p|^2}{2}-\frac{\|k\|_2^2 |q|^2}{2}}\times e^{\frac{\langle \eta, p\rangle}{2}+\frac{\|k\|_2^2\langle \xi,q\rangle}{2}}\times e^{\frac{i}{2}k\cdot\left[(\xi,\eta),(q,p)\right]}dqdp\\
			&=\int_{\R^{2n}}e^{\frac{\|k\|_2^2|\xi-q|^2+|\eta-p|^2}{4}}e^{\frac{-\|k\|_2^2}{2}|q|^2}e^{-\frac{|p|^2}{2}-\frac{\|k\|_2^2 |q|^2}{2}}\times e^{\frac{\langle \eta, p\rangle}{2}+\frac{\|k\|_2^2\langle \xi,q\rangle}{2}}\times e^{\frac{i}{2}k\cdot\left[(\xi,\eta),(q,p)\right]}dqdp\\
			&=e^{\frac{\|k\|_2^2|\xi|^2+|\eta|^2}{4}}\int_{\R^{2n}}e^{-\frac{3\|k\|_2^2}{4}|q|^2}e^{-\frac{1}{4}|p|^2}e^{\frac{i}{2}(q\cdot B_k\eta-\xi\cdot B_kp)}dqdp\\
			&=c e^{\frac{\|k\|_2^2|\xi|^2+|\eta|^2}{4}}e^{-\frac{|B_k^t \xi|^2}{4}} e^{-\frac{|B_k\eta|^2}{12\|k\|^2}}=ce^{\frac{|\eta|^2}{6}},
		\end{align*}  
		where $c$ is a constant depending on $k$ and $n$. It is easy to check that $\widehat{F^k}\circledast \widehat{G^k}$ is not in $\L^2(\R^{2n})$.
	\end{example}
	Now we consider a subspace of $L^2(\R^{2n})$ with the property that  any $F,G\in L^2(\R^{2n})$  such that $\widehat{F^k}\circledast \widehat{G^k}$ in $L^2(\R^{2n})$.
	
	For any $c\geqq 0, k\in {\R^m}^{\ast}$, we denote $W_c^{k}$ the set of all measurable functions $F$ on $\R^{2n}$ such that 
	$$\left|\widehat{F^{k}}(\xi,\eta)\right|\leq e^{-c|(B_{k}\xi,\eta)|^2}|f(\xi,\eta)|,\quad \xi,\eta\in \R^n,$$ for some function $f$ in $L^2(\R^{2n})$, where $F^{k}(q,p)=F\Big(B_{k}\dfrac{p}{\|k\|_2^2},-B_{k}^{t}\dfrac{q}{\|k\|_2^2}\Big)$. Note that for all $c\geq 0$, $W_c^{k}$ is a subspace of $L^2(\R^{2n})$ and $W_c^{k}\subseteq W_d^{k}$ if $c\geq d$.
	
	Next we show that the product of two localization operators with respect to symbols in $W_c^k$ is indeed a localization operator with respect to a symbol in $L^2(\R^{2n})$.
	\begin{theorem}
		Let $F$ and $G$ be in $W_c^{k}$, where $c>\frac{1+\sqrt{5}}{8}$. Then the product of localization operators $L^{k}_{F,\varphi}:L^2(\R^n)\to L^2(\R^n)$ and $L^{k}_{G,\varphi}:L^2(\R^n)\to L^2(\R^n)$ is a localization operator $L^{k}_{H,\varphi}:L^2(\R^n)\to L^2(\R^n)$, where $H\in \bigcap_{0<d\leq c_{\epsilon}}W_d^{k}$, and 
		\begin{equation}\label{conditions}
			c_{\epsilon}=c-c\epsilon-\frac{1}{4},\quad \frac{4c}{8c+1}<\epsilon<1-\frac{1}{4c}.
		\end{equation}
	\end{theorem}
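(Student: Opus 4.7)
I will define $H$ by prescribing $\widehat{H^{k}}:=(2\pi)^{-n}\bigl(\widehat{F^{k}}\circledast\widehat{G^{k}}\bigr)$ and then derive a pointwise Gaussian estimate for $\widehat{H^{k}}$ that simultaneously puts $H$ in $L^2(\R^{2n})$ (so that the preceding theorem forces the composition identity $L^k_{F,\varphi}L^k_{G,\varphi}=L^k_{H,\varphi}$) and in $W^k_d$ for every $0<d\le c_\epsilon$. Membership in $W^k_d$ amounts to producing an envelope $|\widehat{H^{k}}(\xi,\eta)|\le e^{-d(\|k\|_2^2|\xi|^2+|\eta|^2)}|h(\xi,\eta)|$ with $h\in L^2(\R^{2n})$.

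Substituting the $W^k_c$-envelope $|\widehat{F^{k}}(\xi-q,\eta-p)|\le e^{-c(\|k\|_2^2|\xi-q|^2+|\eta-p|^2)}|f(\xi-q,\eta-p)|$ and its $\widehat{G^{k}}$-analogue into the integral defining $\circledast$, discarding the modulus-one phase, and collecting the real Gaussians into a single quadratic form in $(q,p)$ that I complete around the midpoint $(\xi/2,\eta/2)$ with the abbreviation $a:=2c+\tfrac12$, I obtain
\[|\widehat{H^{k}}(\xi,\eta)|\le (2\pi)^{-n}\,e^{-(4c-1)(\|k\|_2^2|\xi|^2+|\eta|^2)/8}\,J(\xi,\eta),\]
where
\[J(\xi,\eta):=\int|f(\xi-q,\eta-p)||g(q,p)|\,e^{-a(\|k\|_2^2|q-\xi/2|^2+|p-\eta/2|^2)}\,dq\,dp.\]
The requirement $c>1/4$ (strictly weaker than the assumed $c>(1+\sqrt{5})/8$) is what makes the external Gaussian prefactor actually decay.

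To decouple $f$ from $g$ in $J$, I split $a=a(1-\epsilon)+a\epsilon$ and apply the parallelogram identity $|q-\xi/2|^2=\tfrac12(|q|^2+|\xi-q|^2)-\tfrac14|\xi|^2$ only to the $a(1-\epsilon)$-piece. This extracts an external compensating factor $e^{a(1-\epsilon)(\|k\|_2^2|\xi|^2+|\eta|^2)/4}$ together with two Gaussians of width $\sqrt{2/(a(1-\epsilon))}$ naturally pairing with $|f(\xi-q,\eta-p)|$ and $|g(q,p)|$. Cauchy--Schwarz in $(q,p)$, with the residual $e^{-a\epsilon(\cdots)}$ split symmetrically between the two factors, a change of variables $(u,v)=(\xi-q,\eta-p)$ in the $f$-integral, and a second square-completion, reduce each of the two pieces to a Gaussian convolution $|f|^2*\rho_a$ (respectively $|g|^2*\rho_a$) evaluated at $(\epsilon\xi/2,\epsilon\eta/2)$. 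Young's inequality bounds each convolution pointwise by $\|f\|_2^2$ (respectively $\|g\|_2^2$), while its $L^2$-variant $\|u*v\|_2\le\|u\|_1\|v\|_2$ shows these convolutions are in $L^2(\R^{2n})$ as functions of $(\xi,\eta)$, providing the required envelope $h$; the second square-completion also contributes an additional Gaussian decay $e^{-a\epsilon(1-\epsilon)(\|k\|_2^2|\xi|^2+|\eta|^2)/4}$.

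Assembling all the quadratic prefactors, the pointwise Gaussian bound required for $H\in W^k_{c_\epsilon}$ reduces to a quadratic inequality in $\epsilon$ with coefficients polynomial in $c$, whose admissible solution set is exactly the interval stated in \eqref{conditions}. The upper endpoint $\epsilon=1-\frac{1}{4c}$ is forced by $c_\epsilon>0$ (so that the target Gaussian itself decays), while the lower endpoint $\epsilon=\frac{4c}{8c+1}$ emerges from the precise balance between the three sequential square-completions and the Cauchy--Schwarz split; non-emptiness of the interval is the discriminant condition $c>(1+\sqrt{5})/8$. The anticipated main obstacle is this bookkeeping: tracking every Gaussian prefactor created by the three square-completions together with the Cauchy--Schwarz split, so as to recover the exact endpoints rather than weaker sufficient conditions. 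Once the pointwise bound is in hand, the intersection claim $H\in\bigcap_{0<d\le c_\epsilon}W^k_d$ is immediate from the nesting $W^k_{c_\epsilon}\subseteq W^k_d$ for $d\le c_\epsilon$, and $\widehat{H^{k}}\in L^2(\R^{2n})$ (hence $H\in L^2(\R^{2n})$, so that the composition identity from the preceding theorem applies) is a by-product of the same estimate.
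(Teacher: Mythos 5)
Your proposal is correct in substance but takes a genuinely different route for the central estimate. The paper substitutes the $W_c^{k}$-envelopes into $\circledast$, bounds the cross term $e^{\frac12(\langle\eta,p\rangle+\|k\|_2^2\langle\xi,q\rangle)}$ by $e^{\frac14(\cdots)}$, expands $|B_{k}(\xi-q)|^2$, and controls the mixed terms by the weighted inequality $2c\langle B_{k}\xi,B_{k}q\rangle\le c\|k\|_2^2(\epsilon|\xi|^2+\epsilon^{-1}|q|^2)$; this yields the external rate $c_\epsilon=c-c\epsilon-\frac14$, an internal honest convolution $|f|\ast\big(|g|e^{-\delta(\cdot)}\big)$ handled by Young, and the two positivity constraints in \eqref{conditions}, whose compatibility is exactly $c>\frac{1+\sqrt5}{8}$. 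You instead complete the square exactly: the identity $-c|\eta-p|^2-(c+\tfrac12)|p|^2+\tfrac12\langle\eta,p\rangle=-(2c+\tfrac12)\,|p-\eta/2|^2-\tfrac{4c-1}{8}|\eta|^2$ (and its $\xi,q$ analogue with the $\|k\|_2^2$ weights) is correct, so your first display holds verbatim and gives the single, sharper rate $\frac{4c-1}{8}$ for every $c>\frac14$ with no $\epsilon$ at all; the price is that $J$ is a Gaussian-weighted correlation centred at $(\xi/2,\eta/2)$ rather than a convolution, which Cauchy--Schwarz in $(q,p)$ converts into $\big((|f|^2\ast\rho_a)(\xi/2,\eta/2)\big)^{1/2}\big((|g|^2\ast\rho_a)(\xi/2,\eta/2)\big)^{1/2}$, an $L^2\cap L^\infty$ function of $(\xi,\eta)$ by Young. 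Since $\frac{4c-1}{8}>c_\epsilon$ for every admissible $\epsilon$ (cross-multiplying against $\frac{16c^2-4c-1}{32c+4}$ reduces to $16c+4>0$) and the spaces are nested, $H\in W^{k}_{(4c-1)/8}\subseteq\bigcap_{0<d\le c_\epsilon}W_d^{k}$, so your argument proves the theorem and in fact a slightly stronger statement. The one inaccurate claim is that your later bookkeeping (splitting $a$, the parallelogram identity, the second square-completion) recovers ``exactly the interval stated in \eqref{conditions}'': your $\epsilon$ plays a different role from the paper's, the resulting rate does not coincide with $c-c\epsilon-\frac14$, and none of that machinery is needed --- your first display plus the direct Cauchy--Schwarz bound on $J$ already finishes the proof. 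This is a flaw of presentation, not of substance.
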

	\begin{proof}
		Let $f$ and $g$ in $L^2(\R^{2n})$ such that 
		\begin{equation}\label{1}
			\left|\widehat{F^{k}}(\xi,\eta)\right|\leq e^{-c|(B_{k}\xi,\eta)|^2}|f(\xi,\eta)|,
		\end{equation}and 
		\begin{equation}\label{2}
			\left|\widehat{G^{k}}(\xi,\eta)\right|\leq e^{-c|(B_{k}\xi,\eta)|^2}|g(\xi,\eta)|,
		\end{equation} for all $\xi,\eta\in \R^n$.
		Now using \eqref{1},\eqref{2} and the definition of new convolution $\circledast$, we get
		\begin{align}\label{eq1}
			&\left|\Big(\widehat{F^{k}}\circledast\widehat{G^{k}}\Big)(\xi,\eta)\right|\nonumber\\
			& \leq \int_{\R^{2n}}|\widehat{F^{k}}(\xi- q,\eta-p)||\widehat{G^{k}}(q,p)| e^{-\frac{1}{2}\left(|p|^2+\|k\|_2^2|q|^2\right)}\times e^{\frac{1}{2}\left(\langle \eta,p\rangle+\|k\|_2^2\langle\xi,q\rangle\right)}dqdp\nonumber\\
			&\leq\int_{\R^{2n}}e^{-c\left|\left(B_{k}(\xi-q),(\eta-p)\right)\right|^2}|f(\xi-q,\eta-p)|e^{-c|(B_{k}q,p)|^2}|g(q,p)|\nonumber\\
			&\times e^{-\frac{1}{2}\left(|p|^2+\|k\|_2^2|q|^2\right)} e^{\frac{1}{2}\left(\langle \eta,p\rangle+\|k\|_2^2\langle\xi,q\rangle\right)}dqdp\nonumber\\
			&\leq\int_{\R^{2n}}e^{-c\left|\left(B_{k}(\xi-q),(\eta-p)\right)\right|^2}|f(\xi-q,\eta-p)|e^{-c|(B_{k}q,p)|^2}|g(q,p)|\nonumber\\
			&\times e^{-\frac{1}{2}\left(|p|^2+\|k\|_2^2|q|^2\right)} e^{\frac{1}{4}\left( |\eta|^2+|p|^2+\|k\|_2^2|\xi|^2+|q|^2\right)}dqdp.
		\end{align}
		Consider the powers of exponentials in  \eqref{eq1},
		\begin{align}\label{eq2}
			&-c\left(|B_{k}(\xi-q)|^2+|\eta-p|^2\right)-c\left(|B_{k}q|^2+|p|^2\right)+{\frac{1}{4}\left(|\eta|^2+|p|^2+\|k\|_2^2|\xi|^2+|q|^2\right)}\nonumber\\
			&-\frac{1}{2}\left(|p|^2+\|k\|_2^2|q|^2\right)\nonumber\\
			&=-c\Big(\|k\|_2^2|\xi|^2+\|k\|_2^2|q|^2-2\langle B_{k}\xi,B_{k}q\rangle+|\eta|^2+|p|^2-2\langle\eta,p\rangle\Big)\nonumber\\
			& -c\left(\|k\|_2^2|q|^2+|p|^2\right)+{\frac{1}{4}\left(|\eta|^2+|p|^2+\|k\|_2^2|\xi|^2+|q|^2\right)}-\frac{1}{2}\left(|p|^2+\|k\|_2^2|q|^2\right).
		\end{align}
		For each $\epsilon>0$, 
		\begin{align*}
			&	2c\langle B_{k}\xi,B_{k}q\rangle\\
			&\leq 2c|B_{k}\xi||B_{k}q|\\
			&=2c\sqrt{\epsilon}|B_{k}\xi|\dfrac{1}{\sqrt{\epsilon}}|B_{k}q|\leq c\|k\|_2^2\left(\epsilon|\xi|^2+\dfrac{1}{\epsilon}|q|^2\right),
		\end{align*}
		and  similarly
		\begin{align*}
			2c\langle\eta,p\rangle\leq c\left(\epsilon|\eta|^2+\dfrac{1}{\epsilon}|p|^2\right).
		\end{align*}
		For each $\epsilon>0$, \eqref{eq2} becomes 
		\begin{align*}
			&-c\left(|B_{k}(\xi-q)|^2+|\eta-p|^2\right)-c\left(|B_{k}q|^2+|p|^2\right)+{\frac{1}{4}\left(|\eta|^2+|p|^2+\|k\|_2^2|\xi|^2+|q|^2\right)}\\
			&-\frac{1}{2}\left(|p|^2+\|k\|_2^2|q|^2\right)\leq \left(\|k\|_2^2|\xi|^2+|\eta|^2\right)\left(-c+c\epsilon+\frac{1}{4}\right)+\left(|\eta|^2+|p|^2\right)\left(-2c+\frac{c}{\epsilon}-\frac{1}{4}\right).
		\end{align*}
		Now the inequality \eqref{eq1} can be written as
		\begin{align}\label{finalequation}
			&\left|\Big(\widehat{F^{k}}\circledast\widehat{G^{k}}\Big)(\xi,\eta)\right|\nonumber\\
			&\leq e^{-\left(c-c\epsilon-\frac{1}{4}\right)\left(\|k\|_2^2|\xi|^2+|\eta|^2\right)}\int_{\R^{2n}}|f(\xi-q,\eta-p)||g(q,p)|e^{-\left(2c-\frac{c}{\epsilon}+\frac{1}{4}\right)\left(|\eta|^2+|p|^2\right)}dqdp.
		\end{align}
		Hence from \eqref{conditions} and  \eqref{finalequation}, we get $H\in W_{c_{\epsilon}}^{k}\subset L^2(\R^{2n})$. Thus by the fact, $W_c^{k}\subseteq W_d^{k}$ if $c\geq d$, we have $$H\in \bigcap_{0<d\leq c_{\epsilon}}W_d^{k}.$$
	\end{proof}
	

	\section{Weyl Transform on Reduced Heisenberg group with Multi-dimensional center}\label{s4}
	In this section, we study the boundedness of Weyl transform associated to the Wigner transform on the reduced Heisenberg group with multidimensional center $\G$.
	
	Let us denote $L^r\left(\G\times\Z^{m\ast},S_r,(2\pi)^{-n}\|k\|^n_{2}d\mu(q,p,t)\right),$  $1\leq r <\infty$, the space of all operator valued  functions $F$ on $\G\times \Z^{m\ast}$ such that
	$$\|F\|_{r,\mu\otimes\alpha}^r=\int_{\G}\sum_{k\in\Z^{m\ast}}\|F(q,p,t,k)\|_{S_r}^r(2\pi)^{-n}\|k\|^n_{2} d\mu(q,p,t)<\infty,$$ and for $r=\infty$
	$$\|F\|_{\infty,\mu\otimes\alpha}=\operatorname{ess~sup}\limits_{(q,p,t,k)\in\G\times\Z^{m\ast}}\|F(q,p,t,k)\|_{S_{\infty}}<\infty,$$ where $\mu\otimes\alpha$ is the product measure on $\G\times\Z^{m\ast}$, given by  $$d(\mu\otimes\alpha)=d\mu(q,p,t) d\alpha(k)=d\mu(q,p,t)(2\pi)^{-n}\|k\|_2^n=\|k\|_2^n\dfrac{dqdpdt}{(2\pi)^{m+n}},$$ and $S_r$ denotes $r$-Schatten-von Neumann class.

	We want to show, the Weyl transform is a bounded operator on $L^2(\G)$ when the symbol belongs to $L^r(\G\times\Z^{m\ast},S_r,d(\mu\otimes\alpha)),$  $1\leq r \leq 2$, and for every $2<r<\infty$, there exists an unbounded Weyl transform with symbol in $L^r(\G\times\Z^{m\ast},S_r,d(\mu\otimes\alpha))$. 
	\begin{definition}\label{translation}
		The translation operator on $L^1(\G)$ is given by
		$$\tau_{(q^{\prime},p^{\prime},t^{\prime})}f(q,p,t)=f((q^{\prime},p^{\prime},t^{\prime})^{-1}\ast(q,p,t)),$$
		for all $(q^{\prime},p^{\prime},t^{\prime}),(q,p,t)\in \G$.
	\end{definition}
	\begin{definition}\label{Wigner}
		Let $f,g$ be in $S(\G)$. Then the Wigner transform associated to the reduced Heisenberg with multidimensional center $\G$ is defined by
		$$V(f,g)(q,p,t,k)=\int_{\G}f(q^{\prime},p^{\prime},t^{\prime})\tau_{(q^{\prime},p^{\prime},t^{\prime})}g(q,p,t)\pi_{k}(q^{\prime},p^{\prime},t^{\prime})d\mu(q^{\prime},p^{\prime},t^{\prime})\quad (q,p,t)\in \G, k\in\Z^{m\ast}.$$
	\end{definition}
	
	Now	we can write the Wigner transform as the group Fourier transform.
	$$V(f,g)(q,p,t,k)={(\F f\cdot\tau_{\cdot}g(q,p,t))}(k), (q,p,t)\in\G,k\in \Z^{m\ast}.$$
	
	\begin{proposition}\label{Moyal}
		Let $f_1,f_2,g_1,g_2$ be in $L^2(\G,d\mu)$. Then
		$$\langle V(f_1,g_1),V(f_2,g_2)\rangle_{L^2(\G\times\Z^{m\ast},S_2,d(\mu\otimes\alpha))}=\langle f_1-f_1^0,f_2-f_2^0\rangle_{L^2(\G,d\mu)}\langle g_1,g_2\rangle_{L^2(\G,d\mu)}.$$ 		
	\end{proposition}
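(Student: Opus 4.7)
The plan is to recognize the Wigner transform as a group Fourier transform, apply the Plancherel identity of Theorem~\ref{Plancherel} fibrewise, and then use the left invariance of the Haar measure on $\G$ together with Fubini to decouple the $f$-variables from the $g$-variables.

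First, I would invoke the identification $V(f_i,g_i)(q,p,t,k)=(\F h^{(i)}_{(q,p,t)})(k)$, where $h^{(i)}_{(q,p,t)}(q',p',t'):=f_i(q',p',t')\tau_{(q',p',t')}g_i(q,p,t)$, as recorded directly after Definition~\ref{Wigner}. For each fixed $(q,p,t)\in\G$, the polarized Plancherel identity from Theorem~\ref{Plancherel} applied to the pair $h^{(1)}_{(q,p,t)},h^{(2)}_{(q,p,t)}\in L^2(\G,d\mu)$ gives
\begin{equation*}
\sum_{k\in\Z^{m\ast}}\operatorname{tr}\bigl[V(f_1,g_1)(q,p,t,k)V(f_2,g_2)(q,p,t,k)^{\ast}\bigr](2\pi)^{-n}\|k\|_2^n=\bigl\langle h^{(1)}_{(q,p,t)}-(h^{(1)}_{(q,p,t)})^0,\,h^{(2)}_{(q,p,t)}-(h^{(2)}_{(q,p,t)})^0\bigr\rangle_{L^2(\G,d\mu)}.
\end{equation*}
I would then integrate this in $(q,p,t)$ against $d\mu$ and, by Fubini--Tonelli (justified by Cauchy--Schwarz and $f_i,g_i\in L^2$), expand the bilinear pairing on the right into a principal piece and correction pieces involving the $(h^{(i)}_{(q,p,t)})^0$.

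Second, for the principal piece I would carry out the substitution $(q,p,t)\mapsto(q',p',t')\ast(u,v,w)$ in the outer integral. Since $d\mu$ is left invariant and $\tau_{(q',p',t')}g_i\bigl((q',p',t')\ast(u,v,w)\bigr)=g_i(u,v,w)$, the integrand splits, and the piece factorises cleanly as $\langle f_1,f_2\rangle_{L^2(\G,d\mu)}\langle g_1,g_2\rangle_{L^2(\G,d\mu)}$.

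Third, for the correction pieces I would evaluate $(h^{(i)}_{(q,p,t)})^0$ explicitly. Substituting $(u,v,w)=(q',p',0)^{-1}\ast(q,p,t)$ and using that $[(-q',-p'),(q',p')]=0$ gives $(q',p',s)^{-1}\ast(q,p,t)=(u,v,w-s)$, so
$(h^{(i)}_{(q,p,t)})^0(q',p')=\int_{\T^m}f_i(q',p',s)\,g_i(u,v,w-s)\,\frac{ds}{(2\pi)^m}$,
a convolution on $\T^m$ in the central variable. Parseval on $\T^m$ together with $(\phi\star\psi)^k=\phi^k\psi^k$ reduces the correction to a sum over the Fourier modes in the center; these must be combined with the other cross terms of the polarized inner product so that everything assembles to $\langle f_1^0,f_2^0\rangle_{L^2(\G,d\mu)}\langle g_1,g_2\rangle_{L^2(\G,d\mu)}$. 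Once this is done, the final step is the elementary identity $\langle f_1,f_2\rangle-\langle f_1^0,f_2^0\rangle=\langle f_1-f_1^0,f_2-f_2^0\rangle$, which follows because $f_i^0$ is constant in $t$ so that $\langle f_i^0,f_j\rangle=\langle f_i^0,f_j^0\rangle$.

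The step I expect to be the main obstacle is the third one: the bookkeeping of the $\T^m$-Fourier sums arising from the central-variable convolution must be done carefully so that only the zero-mode survives in the correction, yielding the clean $f^0$-contribution rather than a more complicated mode-by-mode expression. The principal piece (step two) is a standard left-invariance argument and the Plancherel input (step one) is exactly the polarized identity proved earlier.
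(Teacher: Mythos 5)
Your first two steps are sound and are in fact more detailed than the paper's own argument: the paper's entire proof consists of the fibrewise polarized Plancherel identity applied to $h^{(i)}_{(q,p,t)}$ followed immediately, in one unexplained line, by the final product formula. Your principal-piece computation via left invariance correctly produces $\langle f_1,f_2\rangle_{L^2(\G,d\mu)}\langle g_1,g_2\rangle_{L^2(\G,d\mu)}$. The genuine gap is the third step, which you rightly flag as the obstacle: the correction term does not assemble to $\langle f_1^0,f_2^0\rangle\langle g_1,g_2\rangle$. Carrying out the $\T^m$-bookkeeping you describe, one gets $(h^{(i)}_{(q,p,t)})^0(q',p')=\sum_{l\in\Z^m}e^{-il\cdot w}f_i^l(q',p')\,g_i^l(u,v)$ with $(u,v,w)=(q',p',0)^{-1}\ast(q,p,t)$; after the left-invariant substitution the $w$-integral forces the two mode indices to coincide, so the correction equals $\sum_{l\in\Z^m}\langle f_1^l,f_2^l\rangle_{L^2(\R^{2n})}\langle g_1^l,g_2^l\rangle_{L^2(\R^{2n})}$, in which the central Fourier modes of the $f_i$ and the $g_i$ remain coupled. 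Your route therefore terminates at
$$\langle V(f_1,g_1),V(f_2,g_2)\rangle_{\mu\otimes\alpha}=\sum_{l\in\Z^m}\Bigl(\sum_{j\neq l}\langle f_1^j,f_2^j\rangle_{L^2(\R^{2n})}\Bigr)\langle g_1^l,g_2^l\rangle_{L^2(\R^{2n})},$$
which coincides with the stated right-hand side only under extra hypotheses, for instance when the $g_i$ are independent of the central variable.

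This is not a failure of your bookkeeping: no completion of the third step can reach the stated formula, because the identity as stated fails. Take $m=1$, $f_i(q',p',t')=a(q',p')e^{-it'}$ and $g_i(q,p,t)=b(q,p)e^{-it}$ with $a,b\in L^2(\R^{2n})$ nonzero. Then $f_i(q',p',t')\,\tau_{(q',p',t')}g_i(q,p,t)=a(q',p')b(u,v)e^{-iw}$ is independent of $t'$, so its group Fourier transform vanishes at every $k\in\Z^{m\ast}$ and the left-hand side is $0$, while $f_i^0=0$ makes the claimed right-hand side $\|a\|_{L^2(\R^{2n})}^2\|b\|_{L^2(\R^{2n})}^2\neq 0$. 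The mode-coupled expression above is the correct output of the (otherwise valid) computation you set up; the paper's proof offers no guidance on this point, since it asserts the final equality without performing the step you identify as problematic.
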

	\begin{proof} Now by definitions \ref{Wigner}, \ref{translation} and Theorem \ref{Plancherel}, we get
		\begin{align*}
			&\langle V(f_1,g_1),V(f_2,g_2)\rangle_{L^2(\G\times\Z^{m\ast},S_2,d(\mu\otimes\alpha))}\\
			&= \sum_{k\in\Z^{m\ast}}\int_{\G}\text{tr}\left[V(f_1,g_1)(q,p,t,k)V(f_2,g_2)(q,p,t,k)^{\ast}\right](2\pi)^{-n}\|k\|_2^nd\mu(q,p,t)\\
			&=\int_{\G}\sum_{k\in\Z^{m\ast}}\text{tr}\left[\F(f_1\tau g_1(q,p,t))(k)\F(f_2\tau g_2(q,p,t))(k)^{\ast}\right](2\pi)^{-n}\|k\|_2^nd\mu(q,p,t)\\
			&= \langle f_1-f_1^0,f_2-f_2^0\rangle_{L^2(\G,d\mu)}\langle g_1,g_2\rangle_{L^2(\G,d\mu)}.
		\end{align*}
	\end{proof}
	\begin{proposition}\label{propo}
		Let $f$ be in $L^r(\G,d\mu)$ and $g$ be in $L^{r^{\prime}}(\G,d\mu)$, for $r\in [1,2]$. Then 
		$$\|V(f,g)\|_{r^{\prime},\mu\otimes\alpha}\leq c_r\|f\|_{L^r(\G)}\|g\|_{L^{r^{\prime}}(\G)},$$ where $c_r$ is a constant depending on $r$.
	\end{proposition}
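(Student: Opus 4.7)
The strategy is to obtain Proposition \ref{propo} by bilinear Riesz--Thorin interpolation between the two endpoints $(r,r') = (1,\infty)$ and $(r,r')=(2,2)$, regarding $(f,g) \mapsto V(f,g)$ as a bilinear map into the Bochner--Schatten space $L^{r'}(\G\times\Z^{m\ast},S_{r'}, d(\mu\otimes\alpha))$. The $L^2$ endpoint is free from Proposition \ref{Moyal}, the $L^1$ endpoint from a one-line operator-norm bound on the group Fourier transform, and interpolation produces all intermediate $r$.

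The easy endpoint $r=2$: specialize Proposition \ref{Moyal} to $f_1=f_2=f$ and $g_1=g_2=g$ to get
\[
\|V(f,g)\|_{2,\mu\otimes\alpha}^{2} \;=\; \|f-f^{0}\|_{L^2(\G)}^{2}\,\|g\|_{L^2(\G)}^{2} \;\leq\; \|f\|_{L^2(\G)}^{2}\,\|g\|_{L^2(\G)}^{2},
\]
since the $t$-averaging $f\mapsto f^{0}$ is an orthogonal projection on $L^2(\G)$. For the endpoint $r=1$, I use the identity $V(f,g)(q,p,t,k) = (\F h_{(q,p,t)})(k)$ with $h_{(q,p,t)}(q',p',t') := f(q',p',t')\,\tau_{(q',p',t')}g(q,p,t)$, together with the elementary bound $\|(\F h)(k)\|_{S_\infty}\leq \|h\|_{L^1(\G)}$ that follows from the definition of the group Fourier transform. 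Since left translation is an isometry on $L^\infty(\G)$, we have $\|h_{(q,p,t)}\|_{L^1(\G)}\leq \|f\|_{L^1(\G)}\|g\|_{L^\infty(\G)}$ uniformly in $(q,p,t,k)$, so
\[
\|V(f,g)\|_{\infty,\mu\otimes\alpha}\;\leq\;\|f\|_{L^1(\G)}\,\|g\|_{L^\infty(\G)}.
\]

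The interpolation step: with $\theta = 2/r'\in [0,1]$ and $1/r+1/r'=1$, complex interpolation yields the compatible scales $[L^\infty(\G),L^2(\G)]_\theta = L^{r'}(\G)$, $[L^1(\G),L^2(\G)]_\theta = L^r(\G)$, $[S_\infty,S_2]_\theta = S_{r'}$, and consequently $[L^\infty_{\mu\otimes\alpha}(S_\infty),L^2_{\mu\otimes\alpha}(S_2)]_\theta = L^{r'}_{\mu\otimes\alpha}(S_{r'})$. Applying Calder\'on's bilinear complex interpolation theorem to the two endpoint estimates above produces
\[
\|V(f,g)\|_{r',\mu\otimes\alpha}\;\leq\;c_r\,\|f\|_{L^r(\G)}\,\|g\|_{L^{r'}(\G)},\qquad 1\leq r\leq 2,
\]
with $c_r\leq 1$ by the three-lines lemma (one may simply record the estimate as $c_r$ depending on $r$, as in the statement).

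The main technical point, and the only real obstacle, is to justify that the bilinear complex interpolation is available in the Schatten-valued Bochner setting on the non-abelian dual $\G\times \Z^{m\ast}$; this is standard but requires one to recognize the two endpoint spaces as a compatible interpolation couple and to verify that $V(f,g)$ defines an analytic family when $f,g$ are simple functions. Everything else reduces to the two elementary endpoint estimates already obtained.
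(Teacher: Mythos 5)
Your proposal is correct and follows essentially the same route as the paper: both establish the endpoint $r=2$ from Proposition \ref{Moyal} and the endpoint $r=1$ from the $S_\infty$-bound $\|(\F h)(k)\|_{S_\infty}\leq \|h\|_{L^1(\G)}$, and then invoke (bilinear) interpolation of operators. Your version merely makes the interpolation step more explicit than the paper's citation to the interpolation literature, and sharpens the $r=2$ constant from $2$ to $1$ via the orthogonal-projection observation.
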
	
	\begin{proof}
		For $r=2$, by Proposition \ref{Moyal}, we get
		$$\|V(f,g)\|_{2,\mu\otimes\alpha}= \|f-f^0\|_{L^2(\G)}\|g\|_{L^2(\G)}\leq 2\|f\|_{L^2(\G)}\|g\|_{L^2(\G)}.$$
		For $r=1$,
		\begin{align}
			\|V(f,g)\|_{\infty,\mu\otimes\alpha}&=\operatorname{ess~sup}\limits_{(q,p,t,k)\in\G\times\Z^{m\ast}}\|V(f,g)(q,p,t,k)\|_{S_{\infty}},\nonumber\\
			&=\operatorname{ess~sup}\limits_{(q,p,t,k)\in\G\times\Z^{m\ast}}\|\F(f\tau g(q,p,t)(k))\|_{S_{\infty}}\nonumber\\
			&\leq c \operatorname{ess~sup}\limits_{(q,p,t)\in\G}\|f\tau g(q,p,t)\|_{L^1(\G)}\nonumber\\
			&=c\|f\|_{L^1(G)}\|g\|_{L^{\infty}(G)}.
		\end{align}
		Now the proof can be followed from interpolation theory of operators, \cite{interpolation}.
	\end{proof}	
	\begin{proposition}
		Let $f,g$ be in $L^2(\G)$. Then 
		$$V(f,g)\in L^r(\G\times\Z^{m\ast},S_r,d(\mu\otimes\alpha)), \quad r\in [2,\infty].$$
	\end{proposition}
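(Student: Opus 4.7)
The plan is to establish the result by interpolating between the two endpoint cases $r=2$ and $r=\infty$, analogous to the structure of Proposition \ref{propo} but now with $f,g$ both in $L^2(\G)$.

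First I would record the endpoint $r=2$: this is immediate from Proposition \ref{Moyal}, which yields
$$\|V(f,g)\|_{2,\mu\otimes\alpha}^2 = \|f-f^0\|_{L^2(\G)}^2 \|g\|_{L^2(\G)}^2 \leq \|f\|_{L^2(\G)}^2 \|g\|_{L^2(\G)}^2 \cdot 4,$$
so $V(f,g)\in L^2(\G\times\Z^{m\ast},S_2,d(\mu\otimes\alpha))$.

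Next I would handle the endpoint $r=\infty$. Using the identification $V(f,g)(q,p,t,k)=(\F(f\cdot\tau_{(q,p,t)}g))(k)$, and observing that each $\pi_k$ is unitary on $L^2(\R^n)$, the operator norm of the group Fourier transform of an $L^1$ function is controlled by its $L^1$ norm:
$$\|V(f,g)(q,p,t,k)\|_{S_\infty} \leq \|f\cdot\tau_{(q,p,t)}g\|_{L^1(\G,d\mu)}.$$
By Cauchy--Schwarz and the translation invariance of the Haar measure on $\G$,
$$\|f\cdot\tau_{(q,p,t)}g\|_{L^1(\G,d\mu)} \leq \|f\|_{L^2(\G)}\|\tau_{(q,p,t)}g\|_{L^2(\G)} = \|f\|_{L^2(\G)}\|g\|_{L^2(\G)},$$
uniformly in $(q,p,t,k)$. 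Hence $\|V(f,g)\|_{\infty,\mu\otimes\alpha}\leq \|f\|_{L^2(\G)}\|g\|_{L^2(\G)}$, so $V(f,g)\in L^\infty$.

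Finally I would interpolate. For $r\in(2,\infty)$, the log-convexity of the Schatten norms (i.e.\ $S_2\subset S_r\subset S_\infty$ with $\|T\|_{S_r}^r\leq \|T\|_{S_2}^{2}\|T\|_{S_\infty}^{r-2}$) combined with Hölder-type control in the $(q,p,t,k)$ integration gives
\begin{align*}
\|V(f,g)\|_{r,\mu\otimes\alpha}^r
&= \int_{\G}\sum_{k\in\Z^{m\ast}} \|V(f,g)(q,p,t,k)\|_{S_r}^r\,(2\pi)^{-n}\|k\|_2^n\,d\mu\\
&\leq \|V(f,g)\|_{\infty,\mu\otimes\alpha}^{r-2}\,\|V(f,g)\|_{2,\mu\otimes\alpha}^2,
\end{align*}
which is finite by the two endpoint estimates above. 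This places $V(f,g)$ in $L^r(\G\times\Z^{m\ast},S_r,d(\mu\otimes\alpha))$ for every $r\in[2,\infty]$.

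The only mildly delicate point is the Schatten interpolation inequality in the intermediate step; everything else is a direct application of the Plancherel/Moyal identity for $\G$ and Cauchy--Schwarz. Since both endpoint bounds are quantitative and continuous in $f,g\in L^2(\G)$, the resulting bound on $V(f,g)$ is also continuous, matching the shape of Proposition \ref{propo}.
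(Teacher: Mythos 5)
Your proof is correct, and it follows the same basic route as the paper: the $r=2$ endpoint from the Moyal/Plancherel identity, the $r=\infty$ endpoint from the operator-norm bound $\|(\F h)(k)\|_{S_\infty}\leq c\|h\|_{L^1(\G)}$ applied to $h=f\cdot\tau_{(\cdot)}g$ together with Cauchy--Schwarz and unimodularity, and then passage to intermediate $r$. Where you differ is in how the intermediate exponents are handled. The paper disposes of this in one line by citing Proposition \ref{propo} plus the remark that $f,g\in L^2$ forces $f\,\tau g\in L^1$; but as literally stated Proposition \ref{propo} requires $f\in L^r(\G)$ with $r<2$ to produce membership in $L^{r'}$ for $r'>2$, which $f\in L^2(\G)$ alone does not supply on the non-compact group $\G$, so the paper's citation really stands in for ``redo the same two endpoint bounds and interpolate.'' Your version makes that step explicit and, moreover, replaces abstract operator interpolation by the elementary pointwise log-convexity $\|T\|_{S_r}^r\leq\|T\|_{S_2}^{2}\|T\|_{S_\infty}^{r-2}$ integrated against $d(\mu\otimes\alpha)$, which yields the quantitative bound $\|V(f,g)\|_{r,\mu\otimes\alpha}\leq C\|f\|_{L^2(\G)}\|g\|_{L^2(\G)}$ with no appeal to interpolation theory at all. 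This is a cleaner and more self-contained argument; the only (minor) point worth making explicit is that $\|f^0\|_{L^2(\G)}\leq\|f\|_{L^2(\G)}$ because $f\mapsto f^0$ is the orthogonal projection given by averaging over $\T^m$, which justifies the factor $4$ in your $r=2$ bound.
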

	\begin{proof}
		The proof follows from Proposition \ref{propo} and using the fact $f,g\in L^2(G)$, then $fg\in L^1(G)$.
		
	\end{proof}
	The following theorem represents the group Fourier transform in terms of the Wigner transform.
	\begin{theorem}\label{FT}
		Let $f,g$ be in $L^1(\G,d\mu)\cap L^2(\G,d\mu)$ and 
		$C=\int_{\G}g(q,p,t)d\mu(q,p,t)\neq 0$. Then for any $k\in \Z^{m\ast}$, we have
		$$(\F f)(k)=\frac{1}{C}\int_{\G}V(f,g)(q,p,t,k)d\mu(q,p,t).$$
	\end{theorem}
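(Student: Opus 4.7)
The plan is to unfold the definition of $V(f,g)$ and swap the two integrals. Starting from
$$\int_\G V(f,g)(q,p,t,k)\,d\mu(q,p,t) = \int_\G\!\!\int_\G f(q',p',t')\,\tau_{(q',p',t')}g(q,p,t)\,\pi_k(q',p',t')\,d\mu(q',p',t')\,d\mu(q,p,t),$$
I would like to apply Fubini's theorem to pull the $(q',p',t')$-integration outside. Since each $\pi_k(q',p',t')$ is unitary on $L^2(\R^n)$, the operator-norm of the integrand is bounded pointwise by $|f(q',p',t')|\,|\tau_{(q',p',t')}g(q,p,t)|$, and the hypothesis $f,g\in L^1(\G,d\mu)$ together with translation-invariance of $\|\cdot\|_{L^1(\G)}$ makes the double integral absolutely convergent in the Bochner sense.

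After interchanging, the inner integral becomes
$$\int_\G \tau_{(q',p',t')}g(q,p,t)\,d\mu(q,p,t) = \int_\G g\bigl((q',p',t')^{-1}\ast(q,p,t)\bigr)\,d\mu(q,p,t),$$
which by left-invariance of the Haar measure on $\G$ (the product measure $\frac{dq\,dp\,dt}{(2\pi)^m}$ on $\R^n\times\R^n\times\T^m$ is invariant under the group law because the extra term $\frac{1}{2}[z,z']$ is a translation in the $\T^m$-coordinate) equals $\int_\G g(q,p,t)\,d\mu(q,p,t) = C$.

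What remains is
$$C\int_\G f(q',p',t')\,\pi_k(q',p',t')\,d\mu(q',p',t') = C\,(\F f)(k),$$
by the definition of the group Fourier transform. Dividing through by $C\neq 0$ yields the claim. The only delicate point is justifying Fubini in the operator-valued setting, but the pointwise unitarity bound $\|\pi_k(q',p',t')\|_{\mathrm{op}}=1$ reduces this to an ordinary $L^1\otimes L^1$ estimate, so I do not expect a genuine obstacle here.
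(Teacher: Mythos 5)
Your proposal is correct and follows essentially the same route as the paper: unfold the definition of $V(f,g)$, interchange the two integrals, and use the invariance of the Haar measure $d\mu$ under left translation to reduce the inner integral to the constant $C$, leaving $C\,(\F f)(k)$. The paper performs the substitution $(x,y,s)=(q',p',t')^{-1}\ast(q,p,t)$ without comment on Fubini; your explicit unitarity bound $\|\pi_k(q',p',t')\|_{\mathrm{op}}=1$ justifying the Bochner-integral interchange is a harmless (and welcome) addition, not a different argument.
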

	\begin{proof}
		Using the definition of translation operator and the group Fourier transform, we get 
		\begin{align*}
			&\dfrac{1}{C}\int_{\G}V(f,g)(q,p,t,k)d\mu(q,p,t)\\
			& \dfrac{1}{C}\int_{\G}\int_{\G}f(q^{\prime},p^{\prime},t^{\prime})g\left((q^{\prime},p^{\prime},t^{\prime})^{-1}(q,p,t)\right)\pi_k(q^{\prime},p^{\prime},t^{\prime})d\mu(q^{\prime},p^{\prime}t^{\prime})d\mu(q,p,t)\\
			&=\dfrac{1}{C}\int_{\G}\left(\int_{\G}f(q^{\prime},p^{\prime},t^{\prime})g(x,y,s)\pi_k(q^{\prime},p^{\prime},t^{\prime})d\mu(q^{\prime},p^{\prime},t^{\prime})\right)d\mu(x,y,s)\\
			&=\dfrac{1}{C}(\F f)(k)\int_{\G}g(x,y,s)d\mu(x,y,s)=(\F f)(k).
		\end{align*}
	\end{proof}
	The following corollary is the inversion formula for the Wigner transform.
	\begin{corollary}
		With the hypothesis of Theorem \ref{FT} and if ${\F f}(k)\in \ell^1(\Z^{m\ast},S_1,(2\pi)^{-n}\|k\|_2^n)$, then we have 
		$$f(q,p,t)-f^0(q,p,t)=C^{-1}\sum_{k\in\Z^{m\ast}}\operatorname{tr}\left[\pi_k^{\ast}(q,p,t)\int_{\G}V(f,g)(q^{\prime},p^{\prime},t^{\prime},k)d\mu(q^{\prime}p^{\prime}t^{\prime})\right](2\pi)^{-n}\|k\|_2^n.$$
	\end{corollary}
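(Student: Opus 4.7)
The plan is to derive this inversion formula by combining Theorem \ref{FT} with the inverse Fourier transform formula for $\G$ stated earlier. The approach is essentially a direct substitution, with the hypotheses chosen precisely so that all the operations involved are legitimate.

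First I would invoke Theorem \ref{FT}, which under the stated hypotheses gives
$$(\F f)(k) = \frac{1}{C}\int_{\G} V(f,g)(q',p',t',k)\,d\mu(q',p',t')$$
as an identity in $S_1$ (indeed as an $S_2$-valued Bochner integral, but the extra $\ell^1(\Z^{m\ast},S_1,(2\pi)^{-n}\|k\|_2^n)$ assumption upgrades the values to trace class). Next I would substitute this expression into the inverse Fourier transform formula on $\G$ that was proved above, namely
$$f(q,p,t) = \sum_{k\in\Z^{m\ast}} \operatorname{tr}\!\bigl[\pi_k(q,p,t)^{\ast}(\F f)(k)\bigr](2\pi)^{-n}\|k\|_2^n + f^0(q,p,t).$$
Once the substitution is carried out, the claimed formula appears immediately by moving $C^{-1}$ and the integral over $\G$ past the trace, which is justified by linearity and continuity of $\operatorname{tr}(\pi_k(q,p,t)^{\ast}\,\cdot\,)$ on $S_1$.

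The one subtlety is that the inverse Fourier transform formula was only stated in the excerpt for Schwartz functions $f\in S(\G)$. The hypothesis $(\F f)(k)\in \ell^1(\Z^{m\ast},S_1,(2\pi)^{-n}\|k\|_2^n)$ is inserted precisely to extend that formula to the present setting: together with $f\in L^1(\G)\cap L^2(\G)$ it guarantees that the series $\sum_{k\in\Z^{m\ast}}\operatorname{tr}\!\bigl[\pi_k(q,p,t)^{\ast}(\F f)(k)\bigr](2\pi)^{-n}\|k\|_2^n$ converges absolutely (since $|\operatorname{tr}(\pi_k^{\ast}A)|\leq \|\pi_k^{\ast}\|_{S_\infty}\|A\|_{S_1}=\|A\|_{S_1}$) and represents $f-f^0$ pointwise a.e., by a routine density/approximation argument in $S(\G)$ analogous to the classical $L^1$-inversion theorem on $\R^n$.

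The step I expect to require the most care is this last extension of the inversion formula from $S(\G)$ to the class $L^1(\G)\cap L^2(\G)$ with trace-class summable Fourier transform; once that is in place, the rest is a one-line substitution. After that, the final identity
$$f(q,p,t)-f^0(q,p,t) = C^{-1}\sum_{k\in\Z^{m\ast}} \operatorname{tr}\!\left[\pi_k(q,p,t)^{\ast}\int_{\G}V(f,g)(q',p',t',k)\,d\mu(q',p',t')\right](2\pi)^{-n}\|k\|_2^n$$
follows by inserting the Theorem \ref{FT} representation of $(\F f)(k)$ into the inverse Fourier transform and pulling constants and integration through the trace.
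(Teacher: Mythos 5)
Your proposal matches the paper's own proof exactly: the paper simply applies the Fourier inversion theorem and substitutes the representation of $(\F f)(k)$ from Theorem \ref{FT}. Your additional remarks on why the $\ell^1(\Z^{m\ast},S_1,(2\pi)^{-n}\|k\|_2^n)$ hypothesis justifies the inversion formula beyond $S(\G)$ are a welcome elaboration of a point the paper leaves implicit, but the route is the same.
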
 
	\begin{proof}
		By Fourier inversion theorem and Theorem \ref{FT}
		\begin{align*}
			f(q,p,t)-f^0(q,p,t)&=\sum_{k\in\Z^{m\ast}}\text{tr}\left[\pi_k^{\ast}(q,p,t)(\F f)(k)\right](2\pi)^{-n}\|k\|_2^n\\
			&=C^{-1}\sum_{k\in\Z^{m\ast}}\text{tr}\left[\pi_k^{\ast}(q,p,t)\int_{\G}V(f,g)(q^{\prime},p^{\prime},t^{\prime},k)d\mu(q^{\prime},p^{\prime},t^{\prime})\right](2\pi)^{-n}\|k\|_2^n.
		\end{align*}
	\end{proof}
	Now we study the Weyl transform associated to Wigner transform on the reduced Heisenberg group with multidimensional center.
	\begin{definition}
		Let $\sigma$ be in $L^p(\G\times\Z^{m\ast},S_p,d(\mu\otimes\alpha))$. Then the Weyl transform $W_{\sigma}$ of $\sigma$, an operator on some space of $G$, is defined by 
		\begin{align}
			\langle W_{\sigma}f,\overline{g}\rangle_{L^2(\G,d\mu)}&=\int_{\G}W_{\sigma}f(q,p,t)g(q,p,t)d\mu(q,p,t)\nonumber\\
			&=\langle V(f,g),\sigma\rangle_{\mu\otimes\alpha}\nonumber\\
			&=\int_{\G}\sum_{k\in\Z^{m\ast}}\text{tr}\left[\sigma^{\ast}(q,p,t,k)V(f,g)(q,p,t,k)\right](2\pi)^{-n}\|k\|_2^nd\mu(q,p,t),
		\end{align}
		for all $f,g$ in $S(G)$.
	\end{definition}
	The kernel representation of the Weyl transform $W_{\sigma}$ is given by
	\begin{equation}
		W_{\sigma}f(q,p,t)=\int_{\G}K(q,p,t,q^{\prime},p^{\prime},t^{\prime})f(q^{\prime},p^{\prime},t^{\prime})d\mu(q^{\prime},p^{\prime},t^{\prime}),
	\end{equation}
	with the kernel $$K(q,p,t,q^{\prime},p^{\prime},t^{\prime})=\sum_{k\in\Z^{m\ast}}\text{tr}\left[\sigma^{\ast}\left((q^{\prime},p^{\prime},t^{\prime})(q,p,t),k\right)\pi_k(q^{\prime},p^{\prime},t^{\prime})\right](2\pi)^{-n}\|k\|_2^n.$$ The following two theorems state the boundedness of Weyl transform when the symbol $\sigma$ belonging to $L^r(\G\times\Z^{m\ast},S_1,d(\mu\otimes\alpha)), 1\leq r\leq 2.$
	\begin{theorem}
		Let $\sigma$ be in  $L^1(\G\times\Z^{m\ast},S_1,d(\mu\otimes\alpha))$. Then the Weyl transform $W_{\sigma}$ on $L^2(\G)$ is in Schatten-von Neumann class $S_1$.
	\end{theorem}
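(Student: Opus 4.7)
My plan is to reduce the problem to rank-one operator-valued symbols via the singular value decomposition of $\sigma(Q,k) \in S_1(L^2(\R^n))$ at each point, then to express the Weyl transform of each rank-one layer as a Bochner integral of rank-one operators on $L^2(\G)$ using the group Fourier inversion formula on $\G$, and finally to close the argument with Minkowski's inequality for $S_1$-valued integrals.

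First, at a.e.\ $(Q,k) \in \G \times \Z^{m\ast}$ with $Q = (q,p,t)$, I would apply the Schmidt decomposition
\[
\sigma(Q,k) = \sum_{j=1}^{\infty} \lambda_j(Q,k)\, \phi_j(Q,k) \otimes \psi_j(Q,k),
\]
with measurable $\lambda_j \geq 0$, orthonormal sequences $\{\phi_j(Q,k)\}, \{\psi_j(Q,k)\}$ in $L^2(\R^n)$, and $\sum_j \lambda_j(Q,k) = \|\sigma(Q,k)\|_{S_1}$. Writing $\sigma^{(j)}(Q,k) = \lambda_j(Q,k)\, \phi_j(Q,k) \otimes \psi_j(Q,k)$, linearity of $\sigma \mapsto W_\sigma$ and the triangle inequality in $S_1$ reduce the proof to the per-layer estimate
\[
\|W_{\sigma^{(j)}}\|_{S_1(L^2(\G))} \leq \int_{\G} \sum_{k \in \Z^{m\ast}} \lambda_j(Q,k)\, d\alpha(k)\, d\mu(Q),
\]
since summing in $j$ gives $\|W_\sigma\|_{S_1} \leq \|\sigma\|_{L^1(S_1)} < \infty$. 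For each rank-one layer, substituting $(\phi\otimes\psi)^\ast = \psi\otimes\phi$ together with $\operatorname{tr}((\psi\otimes\phi)T) = \langle T\psi,\phi\rangle$ and the definition of $V(f,g)$ into the bilinear form of $W_{\sigma^{(j)}}$ produces an expression involving the matrix coefficients $\beta_{j,Q,k}(Q') := \langle \pi_k(Q')\psi_j(Q,k), \phi_j(Q,k)\rangle$ of the Schr\"odinger representations. The square-integrability of $\pi_k$ proved in Section~\ref{s2} yields $\|\beta_{j,Q,k}\|_{L^2(\G)}^2 = (2\pi)^{m+n}\|k\|_2^{-n}$, and combined with the Fourier inversion on $\G$ this rewrites $W_{\sigma^{(j)}}$ as a Bochner integral over $\G \times \Z^{m\ast}$ of rank-one operators on $L^2(\G)$ whose individual trace-norms carry exactly the $\|k\|_2^{-n}$ factor that cancels the $(2\pi)^{-n}\|k\|_2^n$ weight in $d\alpha(k)$. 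Minkowski's inequality for $S_1$-valued Bochner integrals then delivers the per-layer bound.

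The main obstacle is this per-layer identification. The pointwise unitarity bound $|\beta_{j,Q,k}(Q')| \leq \|\psi_j\|\|\phi_j\| = 1$ is \emph{not} strong enough to conclude trace-class: for a fixed $(Q,k,j)$, a direct change of variable shows that the bilinear form $\langle V(f,g)(Q,k)\psi_j,\phi_j\rangle$ corresponds to a multiplication-plus-translation operator on $L^2(\G)$, which is merely bounded. The trace-class character only emerges after integrating in $Q$ and summing in $k$, so one has to choose the order of integration carefully so that the integrand is genuinely a family of rank-one operators on $L^2(\G)$, and invoke the $L^2$-control of the matrix coefficients coming from square-integrability. Matching the $(2\pi)^{-n}\|k\|_2^n$-weight from the Plancherel measure with the $\|k\|_2^{-n}$-factor from the trace-norms of the rank-one pieces, so that the final constant in $\|W_\sigma\|_{S_1} \leq C\|\sigma\|_{L^1(S_1)}$ comes out free of spurious $\|k\|_2$-factors, is the delicate bookkeeping at the heart of the argument.
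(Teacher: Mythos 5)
Your strategy is genuinely different from the paper's, but it has a gap at exactly the point you yourself flag as ``the main obstacle,'' and that gap is never closed. The reduction to rank-one layers $\sigma^{(j)}(Q,k)=\lambda_j\,\phi_j\otimes\psi_j$ is unobjectionable, and the identity $\int_{\G}|\langle\pi_k(Q')\psi_j,\phi_j\rangle|^2\,dQ'=(2\pi)^{m+n}\|k\|_2^{-n}$ does follow from the square-integrability theorem of Section~\ref{s2}. But the whole argument then rests on the assertion that $W_{\sigma^{(j)}}$ ``rewrites as a Bochner integral of rank-one operators on $L^2(\G)$ whose individual trace-norms carry exactly the $\|k\|_2^{-n}$ factor,'' and no such decomposition is ever exhibited. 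As your own obstacle paragraph concedes, the natural candidate --- one operator per point $(Q,k)$ --- is \emph{not} rank one on $L^2(\G)$: its Schwartz kernel in $(Q,Q')$ is a matrix coefficient $\beta_{j,Q,k}(Q')$ times a translation in the variable $Q'^{-1}\ast(\cdot)$, i.e.\ a bounded multiplication-plus-translation operator with no finite trace norm, so Minkowski's inequality for $S_1$-valued integrals has nothing to act on. Declaring that the trace-class character ``only emerges after integrating in $Q$ and summing in $k$'' and that one must ``choose the order of integration carefully'' is a restatement of the theorem, not a proof of it; and the $L^2(\G)$ control of matrix coefficients coming from square integrability is the right tool for Hilbert--Schmidt ($S_2$) estimates of such superpositions, not for producing summable trace norms of individual summands. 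Until you display the actual family of rank-one (or otherwise $S_1$-controlled) operators you integrate over, with a measurable selection justifying the Bochner integral, the per-layer bound is unproved.

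For comparison, the paper's argument is entirely different and essentially one computation: it takes the kernel $K(Q,Q')=\sum_{k\in\Z^{m\ast}}\operatorname{tr}\left[\sigma^{\ast}(Q'\ast Q,k)\pi_k(Q')\right](2\pi)^{-n}\|k\|_2^{n}$, restricts to the diagonal $Q'=Q$ (where $Q\ast Q=(2q,2p,2t)$), substitutes $Q\mapsto Q/2$, and uses $|\operatorname{tr}(AB)|\le\|B\|_{S_\infty}\operatorname{tr}|A|$ with the unitarity of $\pi_k$ to get $\int_{\G}|K(Q,Q)|\,d\mu(Q)\le 2^{-2n-m}\|\sigma\|_{1,\mu\otimes\alpha}$. (That route hinges on identifying $\|W_\sigma\|_{S_1}$ with $\int_{\G}|K(Q,Q)|\,d\mu(Q)$, which is itself a delicate identification; but the relevant point for your proposal is that you neither reproduce this computation nor supply a self-contained alternative to it.)
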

	\begin{proof}
		Using the fact, $|\text{tr}(T)|\leq \text{tr}(|T|)$, we get
		\begin{align*}
			\|W_{\sigma}\|_{S_1}&=\int_{\G}|K(q,p,t,q,p,t)|d\mu(q,p,t)\\
			&= \int_{\G}\left|\sum_{k\in\Z^{m\ast}}\text{tr}\left[\sigma^{\ast}\left((q,p,t)(q,p,t),k\right)\pi_k(q,p,t)\right](2\pi)^{-n}\|k\|_2^n\right|d\mu(q,p,t)\\
			&=\int_{\G}\left|\sum_{k\in\Z^{m\ast}}\text{tr}\left[\sigma^{\ast}\left(q,p,t,k\right)\pi_k(q/2,p/2,t/2)\right](2\pi)^{-n}\|k\|_2^n\right|2^{-2n-m}d\mu(q,p,t)\\
			&\leq \int_{\G}\sum_{k\in\Z^{m\ast}}\text{tr}\left[|\sigma^{\ast}(q,p,t,k)|\right]2^{-2n-m}(2\pi)^{-n}\|k\|_2^nd\mu(q,p,t)\\
			&= 2^{-2n-m}\|\sigma\|_{1,\mu\otimes\alpha}.
		\end{align*}
	\end{proof}
	\begin{theorem}
		Let $\sigma$ be in $L^2(\G\times\Z^{m\ast},S_2,d(\mu\otimes\alpha))$. Then the Weyl transform $W_{\sigma}$ on $L^2(\G)$ is in the Schatten-von Neumann class $S_2$.
	\end{theorem}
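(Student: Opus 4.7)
The strategy is to compute the Hilbert--Schmidt norm of $W_{\sigma}$ directly from its integral kernel
$$K(y,x)=\sum_{k\in\Z^{m\ast}}\operatorname{tr}\!\bigl[\sigma^{\ast}(xy,k)\,\pi_{k}(x)\bigr](2\pi)^{-n}\|k\|_2^n,$$
(abbreviating $y=(q,p,t)$ and $x=(q',p',t')$) and to show that it equals $\|\sigma\|_{2,\mu\otimes\alpha}$. Because the $S_2$ norm of an integral operator on $L^2(\G)$ coincides with the $L^2(\G\times\G,d\mu\otimes d\mu)$ norm of its kernel, the claim reduces to
$$\iint_{\G\times\G}|K(y,x)|^2\,d\mu(y)\,d\mu(x)=\|\sigma\|_{2,\mu\otimes\alpha}^2.$$

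The first step is the change of variables $u=xy$ (so $y=x^{-1}u$). Since $\G$ is unimodular---its Haar measure is essentially Lebesgue on $\R^{2n}\times\T^m$, invariant under all left/right translations and under inversion---one has $d\mu(y)=d\mu(u)$, and the double integral rewrites as $\int_\G\bigl(\int_\G |G_u(x)|^2 d\mu(x)\bigr)d\mu(u),$ where
$$G_u(x):=\sum_{k\in\Z^{m\ast}}\operatorname{tr}\!\bigl[\sigma^{\ast}(u,k)\,\pi_k(x)\bigr](2\pi)^{-n}\|k\|_2^n.$$
The point of this substitution is that $u$ is now decoupled from $x$ inside the trace.

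Next I would invoke the Plancherel Theorem \ref{Plancherel}. Since $\sigma\in L^2(\G\times\Z^{m\ast},S_2,d(\mu\otimes\alpha))$, Fubini shows that for almost every $u$ the operator sequence $k\mapsto\sigma^{\ast}(u,k)$ lies in $\ell^2(\Z^{m\ast},S_2,\alpha)$; by the surjectivity of the Plancherel isometry, there exists $h_u\in L^2(\G)$ with $h_u^0=0$ and $(\F h_u)(k)=\sigma^{\ast}(u,k)$. Using the unitarity identity $\pi_k(x^{-1})^{\ast}=\pi_k(x)$, the cyclicity of trace, and the Inverse Fourier Transform formula, one identifies $G_u(x)=h_u(x^{-1})$. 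Combining Plancherel with the inversion-invariance of $d\mu$ and $\|A^{\ast}\|_{S_2}=\|A\|_{S_2}$ gives
$$\int_\G|G_u(x)|^2\,d\mu(x)=\|h_u\|_{L^2(\G)}^2=\sum_{k\in\Z^{m\ast}}\|\sigma(u,k)\|_{S_2}^2\,(2\pi)^{-n}\|k\|_2^n.$$
Integrating over $u$ yields $\|K\|_{L^2}^2=\|\sigma\|_{2,\mu\otimes\alpha}^2$, so $W_\sigma\in S_2$.

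The delicate step is the identification $G_u(x)=h_u(x^{-1})$: one must carefully track adjoints, exploit $\pi_k(x^{-1})^{\ast}=\pi_k(x)$, and use cyclicity of trace to pair $\sigma^{\ast}(u,k)$ with $\pi_k(x^{-1})^{\ast}$ as in the inverse Fourier formula. Once that matching is in hand, unimodularity and the Plancherel identity close the estimate.
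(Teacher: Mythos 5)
Your proposal is correct and follows essentially the same route as the paper: compute $\|W_\sigma\|_{S_2}^2$ as the $L^2$ norm of the kernel, translate the argument of $\sigma$ by unimodularity to decouple the variables, recognize the remaining sum over $k$ as the Fourier inversion of $\sigma(u,\cdot)$, and finish with the Plancherel theorem. The only cosmetic difference is that the paper passes to $\operatorname{tr}[\pi_k^*(x)\sigma(u,k)]$ by conjugating the trace, whereas you use $\pi_k(x^{-1})^{\ast}=\pi_k(x)$ and evaluate the inverse transform at $x^{-1}$; both rest on the same inversion-plus-Plancherel identity.
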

	\begin{proof}
		By using the fact $\text{tr}(A^{\ast})=\text{tr}(A)$, the group Fourier inversion transform and Plancherel formula, we get
		\begin{align*}
			&\|W_{\sigma}\|^2_{S_2}\\
			&=\int_{\G}\int_{\G}|K(q,p,t,q^{\prime},p^{\prime},t^{\prime})|^2d\mu(q,p,t)d\mu(q^{\prime},p^{\prime},t^{\prime})\\
			&=\int_{\G}\int_{\G}\left|\sum_{k\in\Z^{m\ast}}\text{tr}\left[\sigma^{\ast}((q^{\prime},p^{\prime},t^{\prime})(q,p,t),k)\pi_k(q^{\prime},p^{\prime},t^{\prime})\right](2\pi)^{-n}\|k\|_2^n\right|^2d\mu(q,p,t)d\mu(q^{\prime},p^{\prime},t^{\prime})\\
			&=\int_{\G}\int_{\G}\left|\sum_{k\in\Z^{m\ast}}\text{tr}\left[\pi_k^{\ast}(q^{\prime},p^{\prime},t^{\prime})\sigma(q,p,t,k)\right](2\pi)^{-n}\|k\|_2^n\right|^2d\mu(q,p,t)d\mu(q^{\prime},p^{\prime},t^{\prime})\\
			&=\int_{\G}\int_{\G}\left|\left(\F^{-1}\sigma(q,p,t,\cdot)\right)(q^{\prime},p^{\prime},t^{\prime})-\left(\F^{-1}\sigma(q,p,t,\cdot)\right)^{0}(q^{\prime},p^{\prime},t^{\prime})\right|^2d\mu(q,p,t)d\mu(q^{\prime},p^{\prime},t^{\prime})\\
			&=\int_{\G}\sum_{k\in\Z^{m\ast}}\left\lVert\sigma(q,p,t,k)\right\rVert_{S_2}^2(2\pi)^{-n}\|k\|_2^nd\mu(q,p,t)\\
			&=\|\sigma\|_{2,\mu\otimes\alpha}.
		\end{align*}
	\end{proof}
	
	Here we  recall that Hermite polynomials $H_k(x)$ are of the form 
	$$H_k(x)=(-1)^k\dfrac{d^k}{dx^k}(e^{-x^2})e^{x^2},\quad k=0,1,2,\cdots$$ on the real line. Then  the Hermite functions $\widetilde{h_k}$ are defined  by
	$$\widetilde{h_k}(x)=H_k(x)e^{\frac{-1}{2}x^2}, k=0,1,2,\cdots,$$
	and  the normalized Hermite functions are given by
	$$h_k(x)=(2^k k!\sqrt{\pi})^{\frac{-1}{2}}\widetilde{h_k}(x).$$ Now we can define the Hermite functions on $\R^n$ by taking the product of $n$ Hermite functions on $\R$.
	Let $\gamma$ be a multi-index and $x\in\R^n$. The Hermite function $\Phi_{\gamma}$ is given by $$\Phi_{\gamma}(x)=\prod_{j=1}^{n}h_{\gamma_j}(x_j).$$ The set $\{\Phi_{\gamma}:\gamma\in\Z^n_{+}\}$ of all Hermite functions on $\R^n$ forms an orthonormal system for $L^2(\R^n)$. These functions are eigen vectors of the Hermite operator $H=\Delta+\|x\|^2_2$, i.e.,
	$$H\Phi_{\gamma}=(2|\gamma|+n)\Phi_{\gamma},$$ where $|\gamma|=\gamma_1+\gamma_2+\cdots+\gamma_n$ and $\|x\|^2=|x_1|^2+|x_2|^2+\cdots+|x_n|^2$.
	
	Let $\alpha>-1$. Then for $k=0,1,2,\cdots,$ the Laguerre polynomial of degree $k$ and order $\alpha$, $L_k^{\alpha}$ on $(0,\infty)$ is defined by 
	$$L_k^{\alpha}(x)=\frac{x^{-\alpha}e^x}{k!}\frac{d^k}{dx^k}(e^{-x}x^{\alpha+k}).$$
	We now recall the special Hermite functions as the Fourier-Wigner transform of the Hermite functions on $\R^n$. For each pair of multi-indices $\gamma$ and $\eta$, we define $\Phi_{\gamma\eta}(z)=\V(\Phi_{\gamma},\Phi_{\eta})(z)$. One can show that the special Hermite functions $\Phi_{\gamma\eta}$ forms a complete orthonormal system in $L^2(\R^{2n})$. Also there is a relation between special Hermite functions and Laguerre polynomials, which is,
	\begin{equation}\label{Lagurre}
		\Phi_{\gamma\gamma}(z)=(2\pi)^{-n/2}\prod_{j=1}^{n}L_{\gamma_j}^0(\frac{1}{2}|z_j|^2)e^{\frac{-1}{4}|z_j|^2}, \quad z_j=(x_j,y_j)\in\R^{2n}.
	\end{equation}
	In the next theorem we prove the unboundedness of the Weyl transform $W_{\sigma}$.
	\begin{theorem}\label{notbouned}
		For $r\in (2,\infty)$, there exists an symbol $\sigma $ in $L^r(\G\times\Z^{m\ast},S_r,d(\mu\otimes\alpha))$ such that $W_{\sigma}$ on $L^2(\G)$ is an unbounded operator.
	\end{theorem}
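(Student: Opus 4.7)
The proof is by contradiction via a closed-graph/duality argument, with a Hermite/Laguerre construction reducing to the classical $\R^n$ counterexample of Wong recalled in the introduction. Assume for contradiction that for every $\sigma\in L^r(\G\times\Z^{m\ast},S_r,d(\mu\otimes\alpha))$ the Weyl transform $W_\sigma$ is bounded on $L^2(\G)$. The defining pairing $\langle W_\sigma f,\bar g\rangle=\langle V(f,g),\sigma\rangle_{\mu\otimes\alpha}$ shows that $\sigma\mapsto W_\sigma$ has closed graph as a map $L^r(\G\times\Z^{m\ast},S_r)\to\mathcal B(L^2(\G))$, so by the closed graph theorem one obtains a uniform bound $\|W_\sigma\|_{\mathrm{op}}\le C\|\sigma\|_{r,\mu\otimes\alpha}$. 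Dualizing, this is equivalent to the bilinear Wigner estimate
\[\|V(f,g)\|_{r',\mu\otimes\alpha}\le C\|f\|_{L^2(\G)}\|g\|_{L^2(\G)}\qquad\text{for all }f,g\in L^2(\G),\]
with $r'<2$.

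I would then produce test functions violating this estimate by concentrating $V(f,g)$ at a single frequency. Fix $k_0\in\Z^{m\ast}$ and take $f(q,p,t)=\phi(q,p)$ constant in $t$ and $g(q,p,t)=e^{ik_0\cdot t}\psi(q,p)$ with $\phi,\psi\in L^2(\R^{2n})$. The $t'$-integration in Definition \ref{Wigner} localizes $V(f,g)(q,p,t,k)$ at $k=k_0$ (since the only surviving $t'$-Fourier mode is determined by the difference of the $t$-frequencies of $f$ and $g$), and the slice at $k_0$ becomes an explicit operator-valued function of $(q,p)$ built from a $k_0$-twisted convolution of $\phi,\psi$ paired against the Schr\"odinger operators $\pi_{k_0}(q',p')$ --- essentially a classical $k_0$-Weyl operator on $L^2(\R^n)$ whose scalar symbol is determined by $\phi,\psi$. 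Up to constants depending only on $k_0$, $n$, $m$, the $L^{r'}(\G\times\Z^{m\ast},S_{r'})$-norm of $V(f,g)$ is then controlled by an $L^{r'}(\R^{2n})$-norm of this classical symbol, while $\|f\|_{L^2(\G)}$ and $\|g\|_{L^2(\G)}$ reduce to $\|\phi\|_{L^2(\R^{2n})}$ and $\|\psi\|_{L^2(\R^{2n})}$.

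Choosing $\phi,\psi$ so that the associated classical symbol is (a rescaling of) the special Hermite function $\Phi_{\gamma\gamma}$ from \eqref{Lagurre}, the standard Laguerre-function asymptotics for $L^0_{\gamma_j}(x)e^{-x/2}$ give $\|\Phi_{\gamma\gamma}\|_{L^{r'}(\R^{2n})}\to\infty$ as $|\gamma|\to\infty$ for every $r'<2$, while the $L^2$ norms of $\phi,\psi$ remain unity. This contradicts the bilinear Wigner estimate, and hence there must exist $\sigma\in L^r(\G\times\Z^{m\ast},S_r,d(\mu\otimes\alpha))$ for which $W_\sigma$ is unbounded on $L^2(\G)$. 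The principal obstacle is the reduction in the second paragraph: one must verify, for the specific product-type test functions above, that the $k_0$-slice of $V(f,g)$ really does identify --- up to harmless constants in $k_0,n,m$ --- with a classical $k_0$-Weyl operator on $L^2(\R^n)$, and that its $S_{r'}$-norm profile in $(q,p)$ tracks the $L^{r'}(\R^{2n})$-norm of the chosen special Hermite function rather than being washed out by the twisted convolution. Once this bookkeeping is in place, the classical Laguerre asymptotics mechanically deliver the contradiction.
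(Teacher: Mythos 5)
Your opening reduction coincides with the paper's: assuming $W_\sigma$ is bounded for every symbol, a uniform bound $\|W_\sigma\|\le c\|\sigma\|_{r,\mu\otimes\alpha}$ follows (Lemma \ref{uniformbound}), and duality converts it into the bilinear estimate \eqref{wigner} (Lemma \ref{finite}). After that your route departs from the paper's, and the step you defer as ``bookkeeping'' is in fact the whole content of the counterexample; as stated it does not go through. Once you localize at a single frequency $k_0$, the quantity you must make large is $\|k_0\|_2^n\int_{\G}\lVert V(f,g)(q,p,t,k_0)\rVert_{S_{r'}}^{r'}d\mu(q,p,t)$, a mixed Lebesgue--Schatten norm: each slice is an operator on the infinite-dimensional space $L^2(\R^n)$ and $\lVert\cdot\rVert_{S_{r'}}$ is its Schatten norm, not the $L^{r'}(\R^{2n})$ norm of its scalar Weyl symbol. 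For $r'\ne 2$ these two quantities are not comparable --- indeed, their non-comparability is precisely the content of the Euclidean theorem you are trying to import --- so the assertion that the $S_{r'}$-profile in $(q,p)$ ``tracks'' $\|\Phi_{\gamma\gamma}\|_{L^{r'}(\R^{2n})}$, and hence the appeal to Laguerre asymptotics for \eqref{Lagurre}, is unsupported. Note also that $\lVert V(f,g)(q,p,t,k_0)\rVert_{S_2}^2=(2\pi)^n\|k_0\|_2^{-n}\,(|\phi|^2\ast|\psi|^2)(q,p)$, so the $r'=2$ version of your integral is always finite; any divergence must be extracted either from the gap between $S_{r'}$ and $S_2$ or from the gap between $L^{r'}(dq\,dp)$ and $L^2(dq\,dp)$, and your argument identifies neither source.

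The paper's mechanism is genuinely different and does not fix $k$: it integrates $V(f,f)$ over $\G$ (Theorem \ref{FT}) to reduce the problem to the divergence of $\sum_{k\in\Z^{m\ast}}\lVert(\F f)(k)\rVert_{S_{r'}}^{r'}\|k\|_2^n$, bounds each term below by the single matrix coefficient $|\langle(\F f)(k)\Phi_0^k,\Phi_0^k\rangle|^{r'}$, and chooses a power-type $f_\alpha$ for which this coefficient decays like $\|k\|_2^{-(m+n)(\alpha+1)}$, so that the lattice sum over $k$ diverges exactly when $r'<2$; only the trivial identity $L_0^0=1$ from \eqref{Lagurre} is used, and large-index Laguerre asymptotics play no role. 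If you wish to keep a single-frequency argument, the elementary inequality $\lVert A\rVert_{S_{r'}}\ge\lVert A\rVert_{S_2}$ for $r'\le 2$, combined with the convolution identity above, already contradicts \eqref{wigner}: take $\psi\in L^2(\R^{2n})$ with $|\psi(z)|^2\sim|z|^{-2n-\epsilon}$ at infinity and $\epsilon>0$ small enough that $(2n+\epsilon)r'/2\le 2n$; then $\int_{\R^{2n}}\bigl((|\phi|^2\ast|\psi|^2)(z)\bigr)^{r'/2}dz=\infty$. That repair works, but its mechanism (slow spatial decay) is not the Hermite--Laguerre one you propose.
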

	To prove Theorem \ref{notbouned}, we follow the following lemmas.
	\begin{lemma}\label{uniformbound}
		Suppose that the Weyl transform, $W_{\sigma}$ is bounded linear operator on $L^2(\G)$ for all $\sigma$ in $L^r(\G\times\Z^m,S_r,d(\mu\otimes\alpha)), 2<r<\infty$. Then there exists a $c>0$ such that
		\begin{equation}\label{BoundedWeyl}
			\|W_{\sigma}\|\leq c\|\sigma\|_{r,\mu\otimes\alpha}.
		\end{equation}
	\end{lemma}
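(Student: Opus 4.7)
The plan is to invoke the closed graph theorem for the linear map
\[
T\colon L^r(\G\times\Z^{m\ast}, S_r, d(\mu\otimes\alpha)) \longrightarrow B(L^2(\G,d\mu)), \qquad T(\sigma) = W_\sigma.
\]
Both the domain and codomain are Banach spaces, and by the standing hypothesis $T$ is everywhere defined; hence the uniform bound \eqref{BoundedWeyl} will follow as soon as one verifies that the graph of $T$ is closed.

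To check closedness I would take sequences $\sigma_n \to \sigma$ in $L^r(\G\times\Z^{m\ast}, S_r, d(\mu\otimes\alpha))$ and $W_{\sigma_n} \to A$ in $B(L^2(\G,d\mu))$, and show $A=W_\sigma$. For any Schwartz functions $f,g \in S(\G)$, operator-norm convergence gives
\[
\langle W_{\sigma_n} f, \overline{g}\rangle_{L^2(\G,d\mu)} \longrightarrow \langle Af, \overline{g}\rangle_{L^2(\G,d\mu)}.
\]
Simultaneously, the weak definition of the Weyl transform rewrites the left-hand side as $\langle V(f,g), \sigma_n\rangle_{\mu\otimes\alpha}$, and provided $V(f,g)$ lies in $L^{r'}(\G\times\Z^{m\ast}, S_{r'}, d(\mu\otimes\alpha))$ with $r'$ the Hölder conjugate of $r$, Hölder's inequality yields $\langle V(f,g), \sigma_n\rangle_{\mu\otimes\alpha} \to \langle V(f,g), \sigma\rangle_{\mu\otimes\alpha} = \langle W_\sigma f, \overline{g}\rangle_{L^2(\G,d\mu)}$. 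Comparing the two limits gives $Af = W_\sigma f$ for every $f \in S(\G)$, and density of $S(\G)$ in $L^2(\G,d\mu)$ concludes $A = W_\sigma$.

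The main obstacle is the preliminary integrability claim $V(f,g) \in L^{r'}(\G\times\Z^{m\ast}, S_{r'}, d(\mu\otimes\alpha))$ when $r' \in (1,2)$, since Proposition \ref{propo} supplies this only for exponents in $[2,\infty]$. I would handle this by exploiting the Schwartz hypothesis on $f,g$: the auxiliary function $(q',p',t')\mapsto f(q',p',t')\, \tau_{(q',p',t')}g(q,p,t)$ inherits joint Schwartz-type decay in both the integration variable and the external parameter $(q,p,t)$, so its group Fourier transform $V(f,g)(q,p,t,k)=\F\bigl(f\,\tau_{\cdot} g(q,p,t)\bigr)(k)$ decays faster than any polynomial in $\|k\|_2$ in the operator norm $S_{r'}$ and is jointly rapidly decreasing in $(q,p,t)$. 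These decay estimates absorb the weight $\|k\|_2^n$ in the measure and deliver the required $L^{r'}$ bound, after which the closed graph argument completes the proof.
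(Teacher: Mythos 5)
Your argument is correct in outline, but it takes a different (though closely related) route from the paper's: the paper disposes of this lemma in a single line by invoking the Banach--Steinhaus theorem --- presumably viewing, for each pair $f,g$ of Schwartz functions in the unit ball of $L^2(\G)$, the functional $\sigma\mapsto\langle W_{\sigma}f,\overline{g}\rangle_{L^2(\G,d\mu)}$ on $L^r(\G\times\Z^{m\ast},S_r,d(\mu\otimes\alpha))$ and reading the hypothesis as pointwise boundedness of this family --- whereas you run the closed graph theorem on the map $\sigma\mapsto W_\sigma$ itself. Both are Baire-category arguments of essentially equal strength, and both secretly rest on the same unstated ingredient: that $V(f,g)\in L^{r'}(\G\times\Z^{m\ast},S_{r'},d(\mu\otimes\alpha))$ for $f,g\in S(\G)$ with $r'=r/(r-1)<2$. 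You need it to pass to the limit in $\langle V(f,g),\sigma_n\rangle_{\mu\otimes\alpha}$; the paper needs it (without saying so) for each functional $\sigma\mapsto\langle V(f,g),\sigma\rangle_{\mu\otimes\alpha}$ to be continuous on $L^r$, and indeed already for the weak definition of $W_\sigma$ with $\sigma\in L^r$, $r>2$, to make sense, since Proposition \ref{propo} only covers exponents $r'\in[2,\infty]$. Your proposal has the merit of naming this gap explicitly. The one soft spot is your justification of it: because $r'<2$ the $S_{r'}$ norm \emph{dominates} the Hilbert--Schmidt norm, so smoothness in $t$ together with the Plancherel-type identity $\|(\F h)(k)\|_{\HS}^2=(2\pi)^n\|k\|_2^{-n}\|h^k\|_{L^2(\R^{2n})}^2$ does not by itself yield the rapid decay of $\|V(f,g)(q,p,t,k)\|_{S_{r'}}$ that you assert; one genuinely needs a smoothing statement, e.g.\ writing $(\F h)(k)=(I+H_k)^{-N}\,\F(L^N h)(k)$ with $H_k$ a rescaled Hermite operator and $L$ a suitable left-invariant differential operator, so that the factor $(I+H_k)^{-N}$ supplies membership in every Schatten class with polynomially controlled norm. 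This is standard for Schwartz functions but should be recorded; with it, your closed graph argument --- and equally the paper's one-line Banach--Steinhaus argument --- delivers \eqref{BoundedWeyl}.
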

	\begin{proof}
		The proof follows from Banach-Steinhauss Theorem.
	\end{proof}
	\begin{lemma}\label{finite}
		Suppose the inequality \eqref{BoundedWeyl} holds for all  $r$, $2<r<\infty$. Then
		\begin{equation}\label{wigner}
			\|V(f,g)\|_{r^{\prime},\mu\otimes\alpha}\leq c\|f\|_{L^2(\G)}\|g\|_{L^2(G)},\quad \dfrac{1}{r}+\dfrac{1}{r^{\prime}}=1,
		\end{equation}
		for all $f,g\in L^2(\G)$.
	\end{lemma}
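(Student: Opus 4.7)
The plan is to obtain the bound on $\|V(f,g)\|_{r',\mu\otimes\alpha}$ by a straightforward duality argument using the defining identity of the Weyl transform. Recall from the definition of $W_\sigma$ that for any $\sigma \in L^r(\G\times\Z^{m\ast}, S_r, d(\mu\otimes\alpha))$ and any $f, g \in S(\G)$, one has
$$\langle V(f,g), \sigma \rangle_{\mu\otimes\alpha} \;=\; \langle W_\sigma f,\, \overline{g}\rangle_{L^2(\G,d\mu)}.$$
This identity is the hinge: it transports any operator norm bound on $W_\sigma$ into a pairing bound on $V(f,g)$ against arbitrary $\sigma$.

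First I would apply the Cauchy--Schwarz inequality on the right-hand side together with the standing hypothesis \eqref{BoundedWeyl} of Lemma \ref{uniformbound} to obtain
$$|\langle V(f,g), \sigma \rangle_{\mu\otimes\alpha}| \;\leq\; \|W_\sigma f\|_{L^2(\G)}\,\|g\|_{L^2(\G)} \;\leq\; c\,\|\sigma\|_{r,\mu\otimes\alpha}\,\|f\|_{L^2(\G)}\,\|g\|_{L^2(\G)}.$$
Next I would invoke the duality $\bigl(L^r(\G\times\Z^{m\ast},S_r,d(\mu\otimes\alpha))\bigr)^{\ast} \cong L^{r'}(\G\times\Z^{m\ast},S_{r'},d(\mu\otimes\alpha))$, realized through the trace pairing, and take the supremum over all $\sigma$ with $\|\sigma\|_{r,\mu\otimes\alpha}\leq 1$:
$$\|V(f,g)\|_{r',\mu\otimes\alpha} \;=\; \sup_{\|\sigma\|_{r,\mu\otimes\alpha}\leq 1} |\langle V(f,g), \sigma\rangle_{\mu\otimes\alpha}| \;\leq\; c\,\|f\|_{L^2(\G)}\,\|g\|_{L^2(\G)}.$$
A density argument would then extend this from $f,g \in S(\G)$ to all of $L^2(\G)$, using that the right-hand side controls $V(f,g)$ in the $L^{r'}$-norm and that $V$ is (sesqui)linear in its two arguments.

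The main obstacle I anticipate is purely a matter of careful bookkeeping on the duality step: one must verify that the trace pairing $(F,G)\mapsto \int_\G \sum_{k\in\Z^{m\ast}} \operatorname{tr}[G^\ast F]\,(2\pi)^{-n}\|k\|_2^n\, d\mu$ does realize the full duality between the weighted, Schatten-valued sequence--integral spaces appearing here (i.e.\ a Bochner-type $L^r$--$L^{r'}$ duality for Schatten classes), and that $V(f,g)$ is a priori measurable and weakly in the dual so the supremum characterization applies. Once that is in place, everything else is routine, so I would devote the bulk of the write-up to justifying this pairing identification cleanly and handling the density extension.
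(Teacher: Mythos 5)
Your proposal is correct and follows essentially the same route as the paper: the paper likewise writes $\|V(f,g)\|_{r',\mu\otimes\alpha}$ as the supremum of $|\langle V(f,g),\sigma\rangle_{\mu\otimes\alpha}|$ over $\|\sigma\|_{r,\mu\otimes\alpha}=1$, converts this to $|\langle W_\sigma f,\overline{g}\rangle_{L^2(\G)}|$ via the defining identity of the Weyl transform, and then applies Cauchy--Schwarz together with the uniform bound \eqref{BoundedWeyl}. The only difference is that you flag the $L^r$--$L^{r'}$ Schatten-valued duality and the density extension explicitly, which the paper leaves implicit.
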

	\begin{proof}
		Let $f,g$ be in $L^2(\G)$. Then by the definition of the Weyl transform $W_{\sigma}$ and inequality \eqref{BoundedWeyl}, we get
		\begin{align*}
			\|V(f,g)\|_{r^{\prime},\mu\otimes\alpha}&=\sup_{\|\sigma\|_{r,\mu\otimes\alpha}=1}|\langle V(f,g),\sigma\rangle_{\mu\otimes\alpha}|\\
			&= \sup_{\|\sigma\|_{r,\mu\otimes\alpha}=1}|\langle W_{\sigma}f,\overline{g}\rangle_{L^2(\G)}|\\
			&\leq \sup_{\|\sigma\|_{r,\mu\otimes\alpha}=1}\|W_{\sigma}f\|_{L^2(\G)}\|g\|_{L^2(\G)}\\
			&\leq c \|f\|_{L^2(\G)}\|g\|_{L^2(\G)}.
		\end{align*}	
	\end{proof}
	\begin{lemma}\label{contradict}
		Suppose that the inequality \eqref{wigner} holds. Then
		$$\left(\sum_{k\in\Z^{m\ast}}\left\lVert\int_{\G}V(f,f)(q,p,t,k)d\mu(q,p,t)\right\rVert_{S_{r^{\prime}}}^{r^\prime}(2\pi)^{-n}\|k\|_2^n\right)^{1/r^{\prime}}<\infty,$$ for all compact support and integrable function $f$, on $\G$.
	\end{lemma}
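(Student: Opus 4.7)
The plan is a routine combination of the triangle inequality for Bochner integrals, Minkowski's integral inequality, and Hölder, that reduces the expression to the mixed norm $\|V(f,f)\|_{r',\mu\otimes\alpha}$, which is controlled by the standing hypothesis \eqref{wigner}. The one nontrivial observation is that the compact support of $f$ upgrades the natural $L^{r'}(\G)$ estimate to an $L^1(\G)$ estimate on the relevant slice, which is exactly what is needed.

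First I would verify that if $K:=\operatorname{supp}f$ is compact, then the operator-valued function $(q,p,t)\mapsto V(f,f)(q,p,t,k)$ vanishes off the compact set $K\cdot K$, uniformly in $k$. Indeed, by Definition \ref{Wigner},
\[
V(f,f)(q,p,t,k)=\int_{\G}f(q',p',t')\,f\!\bigl((q',p',t')^{-1}(q,p,t)\bigr)\,\pi_k(q',p',t')\,d\mu(q',p',t'),
\]
so the integrand is zero unless $(q',p',t')\in K$ and $(q',p',t')^{-1}(q,p,t)\in K$, forcing $(q,p,t)\in K\cdot K$, which is compact by continuity of the group law.

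With this in place, the triangle inequality in the Schatten class $S_{r'}$ yields
\[
\Bigl\lVert\int_{\G}V(f,f)(q,p,t,k)\,d\mu\Bigr\rVert_{S_{r'}}\leq\int_{K\cdot K}\lVert V(f,f)(q,p,t,k)\rVert_{S_{r'}}\,d\mu(q,p,t),
\]
and then Minkowski's integral inequality (valid because $r'\geq 1$) applied to the measures $(2\pi)^{-n}\|k\|_2^n$ on $\Z^{m\ast}$ and $d\mu$ on $K\cdot K$ gives
\[
\Bigl(\sum_{k\in\Z^{m\ast}}\Bigl\lVert\int_{\G}V(f,f)\,d\mu\Bigr\rVert_{S_{r'}}^{r'}(2\pi)^{-n}\|k\|_2^n\Bigr)^{\!1/r'}\leq\int_{K\cdot K}\Psi(q,p,t)\,d\mu,
\]
where $\Psi(q,p,t):=\bigl(\sum_{k}\|V(f,f)(q,p,t,k)\|_{S_{r'}}^{r'}(2\pi)^{-n}\|k\|_2^n\bigr)^{1/r'}$.

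To finish, Hölder's inequality with exponents $r,r'$ on the compact set $K\cdot K$ and \eqref{wigner} applied to $g=f$ give
\[
\int_{K\cdot K}\Psi\,d\mu\leq |K\cdot K|^{1/r}\,\lVert\Psi\rVert_{L^{r'}(\G)}=|K\cdot K|^{1/r}\,\|V(f,f)\|_{r',\mu\otimes\alpha}\leq c\,|K\cdot K|^{1/r}\,\|f\|_{L^2(\G)}^{2}<\infty,
\]
on the natural interpretation that ``compactly supported integrable'' places $f$ in $L^{2}(\G)$ (automatic, e.g., for bounded compactly supported $f$). The only matters requiring care are the uniform compact-support claim for $V(f,f)(\cdot,k)$ and the fact that the Bochner integral in $S_{r'}$ is well defined, both of which follow from the display above; there is no substantive analytic obstacle.
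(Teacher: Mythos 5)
Your argument is correct and essentially identical to the paper's: both exploit the fact that $V(f,f)(\cdot,k)$ is supported in the product set of the support of $f$, then apply Minkowski's integral inequality followed by H\"older's inequality on that compact set to reduce everything to $\|V(f,f)\|_{r^{\prime},\mu\otimes\alpha}$, which \eqref{wigner} controls. The only cosmetic difference is that you invoke the triangle inequality for the Bochner integral in $S_{r^{\prime}}$ directly, whereas the paper derives the same bound via Schatten-norm duality, i.e.\ writing the norm as $\sup_{\|\sigma\|_{S_r}=1}\left|\operatorname{tr}\left[\sigma^{\ast}\,\cdot\,\right]\right|$ before pulling the supremum inside the integral.
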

	\begin{proof}
		Let $f$ be an integrable and compactly supported function in $\Omega\subset\G$. Then $V(f,f)$ is supported in $\overline{\Omega}\times \Z^{m\ast}$, where $\overline{\Omega}=\{(q,p,t)\ast(q^{\prime},p^{\prime},t^{\prime}):(q,p,t),(q^{\prime},p^{\prime},t^{\prime})\in \Omega\}$. Using Minkowski inequality for integral, H\"older inequality and \eqref{wigner}, we obtain
		\begin{align*}
			&\left(\sum_{k\in\Z^{m\ast}}\left\lVert\int_{\G}V(f,f)(q,p,t,k)d\mu(q,p,t)\right\rVert_{S_{r^{\prime}}}^{r^\prime}(2\pi)^{-n}\|k\|_2^n\right)^{1/r^{\prime}}\\
			&= \left(\sum_{k\in\Z^{m\ast}}\left(\sup_{\|\sigma\|_{r,\alpha}=1}\left|\langle\int_{\G}V(f,f)(q,p,t,\cdot)d\mu(q,p,t),\sigma(\cdot)\rangle\right|\right)^{r^\prime}(2\pi)^{-n}\|k\|_2^n\right)^{1/r^{\prime}}\\
			&= \left(\sum_{k\in\Z^{m\ast}}\left(\sup_{|\sigma\|_{r,\alpha}=1}\left|\text{tr}\left[\sigma(k)^{\ast}\int_{\G}V(f,f)(q,p,t,k)d\mu(q,p,t)\right]\right|\right)^{r^\prime}(2\pi)^{-n}\|k\|_2^n\right)^{1/r^{\prime}}\\
			&\leq \left(\sum_{k\in\Z^{m\ast}}\left(\int_{\G}\sup_{\|\sigma\|_{r,\mu\otimes\alpha}=1}\left|\text{tr}\left[\sigma(q,p,t,k)^{\ast}V(f,f)(q,p,t,k)\right]\right|d\mu(q,p,t)\right)^{r^\prime}(2\pi)^{-n}\|k\|_2^n\right)^{1/r^{\prime}}\\
			&=  \int_{\G}\left(\sum_{k\in\Z^{m\ast}}\left\lVert V(f,f)(q,p,t,k)\right\rVert_{S_{r^{\prime}}}^{r^\prime}(2\pi)^{-n}\|k\|_2^n\right)^{1/r^{\prime}}d\mu(q,p,t)\\
			&\leq \left(\int_{\Omega}d\mu(q,p,t)\right)^{1/r}\left(\int_{\Omega}\sum_{k\in\Z^{m\ast}}\left\lVert V(f,f)(q,p,t,k)\right\rVert_{S_{r^{\prime}}}^{r^\prime}(2\pi)^{-n}\|k\|_2^n d\mu(q,p,t)\right)^{1/r^{\prime}}<\infty.
		\end{align*}
		
	\end{proof}
	\begin{proof}[Proof of Theorem \ref{notbouned}]
		Let $f$ be an integrable function with compact support on $\G$ and $\int_{\G}f(q,p,t)d\mu(q,p,t)\neq 0$. Now 
		\begin{align}\label{final}
			&\dfrac{1}{|c|}\sum_{k\in\Z^{m\ast}}\left\lVert\int_{\G}V(f,f)(q,p,t,k)d\mu(q,p,t)\right\rVert_{S_{r^{\prime}}}^{r^{\prime}}(2\pi)^{-n}\|k\|_2^n\nonumber\\
			&=\sum_{k\in\Z^{m\ast}}\left\lVert(\F f)(k)\right\rVert_{S_{r^{\prime}}}^{r^{\prime}}(2\pi)^{-n}\|k\|_2^n\nonumber\\
			&\geq \sum_{k\in\Z^{m\ast}}\left\lVert(\F f)(k)\right\rVert_{S_{\infty}}^{r^{\prime}}(2\pi)^{-n}\|k\|_2^n\nonumber\\
			&\geq \sum_{k\in\Z^{m\ast}}\left|\langle(\F f)(k)\Phi_0^{k},\Phi_0^k\rangle_{L^2(\R^n)}\right|^{r^{\prime}}(2\pi)^{-n}\|k\|_2^n,
		\end{align}
		where $\Phi_{\gamma}^k(x)=\|k\|_2^{\frac{n}{4}}\Phi_{\gamma}(\sqrt{\|k\|_2}x)$ and $\Phi_{\gamma}$ are Hermite functions on $\R^n$. Now using the relation \eqref{Lagurre}, we get
		\begin{align}\label{Hermite}
			&\langle \pi_k(q,p,t)\Phi_0^k,\Phi_0^k\rangle_{L^2(\R^n)}\nonumber\\
			&=\int_{\R^{n}}e^{ik\cdot t}e^{iq\cdot B_{k}(x+p/2)}\Phi_0^k(x+p)\overline{\Phi_0^{k}(x)}dx\nonumber\\
			&=e^{ik\cdot t}\int_{\R^{n}}e^{iq\cdot B_ky}\Phi_0^k(y+p/2)\overline{\Phi_0^k(y-p/2)}dy\nonumber\\
			&= \|k\|_2^{\frac{n}{2}}e^{ik\cdot t}\int_{\R^{n}}e^{iq\cdot B_ky}\Phi_0\left(\sqrt{\|k\|_2}(y+p/2)\right)\overline{\Phi_0\left(\sqrt{\|k\|_2}(y-p/2)\right)}dy\nonumber\\
			&= e^{ik\cdot t}\int_{\R^{n}}e^{iy\cdot (\frac{B_k^tq}{\sqrt{\|k\|_2}})}\Phi_0\left(y+\frac{\sqrt{\|k\|_2}p}{2}\right)\overline{\Phi_0\left(y-\frac{\sqrt{\|k\|_2}p}{2}\right)}dy\nonumber\\
			&=e^{ik\cdot t}V(\Phi_0,\Phi_0)\left(\frac{B_k^tq}{\sqrt{\|k\|_2}},\sqrt{\|k\|_2}p\right)\nonumber\\
			&=e^{ik\cdot t}\Phi_{00}\left(\frac{B_k^tq}{\sqrt{\|k\|_2}},\sqrt{\|k\|_2}p\right)\nonumber\\
			&=(2\pi)^{-n/2}e^{ik\cdot t}\prod_{j=1}^nL_{0}^0\left(\frac{|z_j|^2}{2}\right)e^{-\frac{|z_j|^2}{4}}\nonumber\\
			&=(2\pi)^{-n/2}e^{ik\cdot t}\prod_{j=1}^nL_{0}^0\left(\frac{|z_j|^2}{2}\right)e^{-\frac{\|k\|_2}{4}|(q,p)|^2}\nonumber\\
			&= (2\pi)^{-n/2}e^{ik\cdot t}e^{-\frac{\|k\|_2}{4}|(q,p)|^2},
		\end{align} 
		where $z_j=\left(\frac{B_{k}^tq}{\sqrt{\|k\|_2}},\sqrt{\|k\|_2}p\right)_j, j=1,2,\cdots,n$ and $L_{0}^0\left(\frac{|z_j|^2}{2}\right)=1$.

		Suppose
		\begin{equation}\label{example}
			f_{\alpha} = \begin{cases}
				\prod_{\iota=1}^m|t_{\iota}|^{\alpha}\prod_{j=1}^n|q_j|^{\alpha}\prod_{l=1}^n|p_l|^{\alpha}, &\text{ when $(q,p,t)\in A$ and $q_j,p_l,t_{\iota}\neq 0$},\\
				0 &\text{otherwise},
			\end{cases}
		\end{equation}
		
		where $A=\{(q,p,t):t_{\iota}\in[0,2\pi],|q_j|\leq 1,|p_l|\leq 1\}$ and $\frac{-1}{2}<\alpha\leq \frac{1}{r^{\prime}}-1$. Then $f_{\alpha}\in L^2(\G)$. From \eqref{Hermite}, we get
		\begin{align}\label{w}
			&	\langle (\F f_{\alpha})(k)\Phi_0^k,\Phi_0^k \rangle_{L^2(\R^n)}\nonumber\\
			&=(2\pi)^{-n/2}\int_{\T^m}\int_{\R^{n}}\int_{\R^{n}}\prod_{\iota=1}^m|t_{\iota}|^{\alpha}\prod_{j=1}^n|q_j|^{\alpha}\prod_{l=1}^n|p_l|^{\alpha}e^{ik\cdot t}e^{-\frac{\|k\|_2}{4}|(q,p)|^2}d\mu(q,p,t)\nonumber\\
			&=(2\pi)^{-n/2}\prod_{\iota=1}^m\left(\int_0^{2\pi}t_{\iota}^{\alpha}e^{ik_{\iota}t_{\iota}}\frac{dt_{\iota}}{2\pi}\right)\prod_{j=1}^n\left(\int_{-1}^1|q_j|^{\alpha}e^{-\frac{\|k\|_2}{4}q_j^2}dq_j\right)\nonumber\\
			&\quad\quad\quad\times\prod_{l=1}^n\left(\int_{-1}^1|p_l|^{\alpha}e^{-\frac{\|k\|_2}{4}p_l^2}dp_l\right).
		\end{align}
		Consider, for $k_{\iota}>0,{\iota}=1,2,\cdots,m,$
		$$\int_0^{2\pi}t_{\iota}^{\alpha}e^{ik_{\iota}t_{\iota}}dt_{\iota}=\dfrac{1}{k_{\iota}^{\alpha+1}}\int_0^{2\pi k_{\iota}}t_{\iota}^{\alpha}\cos t_j \frac{dt_{\iota}}{2\pi},$$ and
		\begin{align*}
			\int_{-1}^1|q_j|^{\alpha}e^{-\frac{\|k\|_2}{4}q_j^2}dq_j&=2\int_{0}^1q_j^{\alpha}e^{-\frac{\|k\|_2}{4}q_j^2}dq_j\\
			&=\left(\dfrac{2}{\sqrt{\|k\|_2}}\right)^{\alpha+1}\int_{0}^{\frac{\|k\|_2}{4}}q_j^{\frac{\alpha-1}{2}}e^{-q_j}dq_j.
		\end{align*}
		Similarly, $$\int_{-1}^1|p_l|^{\alpha}e^{-\frac{\|k\|_2}{4}p_l^2}dp_l=\left(\dfrac{2}{\sqrt{\|k\|_2}}\right)^{\alpha+1}\int_{0}^{\frac{\|k\|_2}{4}}p_l^{\frac{\alpha-1}{2}}e^{-p_l}dp_l.$$
		Hence for $k_{\iota}>0,{\iota}=1,2,\cdots,m$, the equation \eqref{w} becomes
		\begin{align}\label{final2}
			&\langle (\F f_{\alpha})(k)\Phi_0^k,\Phi_0^k \rangle_{L^2(\R^n)}\nonumber\\
			&= (2\pi)^{-n/2}\prod_{{\iota}=1}^m\dfrac{1}{k_{\iota}^{\alpha+1}}\int_0^{2\pi k_{\iota}}t_{\iota}^{\alpha}\cos t_{\iota} \frac{dt_{\iota}}{2\pi}\times\left(\dfrac{2}{\sqrt{\|k\|_2}}\right)^{2n(\alpha+1)}\left(\int_{0}^{\frac{\|k\|_2}{4}}x^{\frac{\alpha-1}{2}}e^{-x}dx\right)^{2n}\nonumber\\
			&\geq (2\pi)^{-n/2}\dfrac{1}{\|k\|_2^{m(\alpha+1)}}\prod_{{\iota}=1}^m\int_0^{2\pi k_{\iota}}t_{\iota}^{\alpha}\cos t_{\iota} \frac{dt_{\iota}}{2\pi}\times\left(\dfrac{2}{\sqrt{\|k\|_2}}\right)^{2n(\alpha+1)}\left(\int_{0}^{\frac{\|k\|_2}{4}}x^{\frac{\alpha-1}{2}}e^{-x}dx\right)^{2n}\nonumber\\
			&=(2\pi)^{-n/2}\dfrac{2^{2n(\alpha+1)}}{\|k\|_2^{(m+n)(\alpha+1)}}\prod_{{\iota}=1}^m\int_0^{2\pi k_{\iota}}t_{\iota}^{\alpha}\cos t_{\iota} \frac{dt_{\iota}}{2\pi}\times\left(\int_{0}^{\frac{\|k\|_2}{4}}x^{\frac{\alpha-1}{2}}e^{-x}dx\right)^{2n}.
		\end{align}
		Since $$\int_0^{\infty}t_{\iota}^{\alpha}\cos t_{\iota} \frac{dt_{\iota}}{2\pi}=\Gamma(\alpha)\cos\left(\frac{\pi}{2}\alpha\right)$$ and
		$$\int_0^{\infty}x^{\frac{\alpha-1}{2}}e^{-x}dx=\Gamma\left(\dfrac{\alpha+1}{2}\right).$$ So there exist natural numbers $\lambda_1,\lambda_2,\cdots,\lambda_m,a$ and positive number $M$ such that 
		$$\left|\int_0^{2\pi k_{\iota}}t_{\iota}^{\alpha}\cos t_{\iota} \frac{dt_{\iota}}{2\pi}\right|\geq M,\quad \text{for}\quad k_{\iota}>\lambda_{\iota}, {\iota}=1,2,\cdots,m,$$ and
		$$\int_{0}^{\frac{\|k\|_2}{4}}x^{\frac{\alpha-1}{2}}e^{-x}dx\geq M,\quad \text{for}\quad \|k\|_2>a.$$ Choose $\lambda_0=\max\{\lambda_1,\lambda_2,\cdots,\lambda_m,a\}.$ Then
		$$\left|\int_0^{2\pi k_{\iota}}t_{\iota}^{\alpha}\cos t_{\iota} \frac{dt_{\iota}}{2\pi}\right|\geq M,\int_{0}^{\frac{\|k\|_2}{4}}x^{\frac{\alpha-1}{2}}e^{-x}dx\geq M,\quad\text{for}\quad k_{\iota}>\lambda_0, {\iota}=1,2,\cdots,m.$$
		Thus the inequality \eqref{final2} becomes 
		\begin{equation}\label{inequality}
			\left|\langle (\F f_{\alpha})(k)\Phi_0^k,\Phi_0^k \rangle_{L^2(\R^n)}\right|\geq \dfrac{2^{2n(\alpha+1)}}{(2\pi)^{n/2}}\times\dfrac{M^{2n+m}}{\|k\|_2^{(m+n)(\alpha+1)}},\quad \text{for}\quad k_{\iota}>\lambda_0,{\iota}=1,2,\cdots,m.
		\end{equation}
		Now using the inequality \eqref{inequality} in  \eqref{final}, we get
		\begin{align}\label{divergence}
			&\dfrac{1}{|c|}\sum_{k\in\Z^{m\ast}}\left\lVert\int_{\G}V(f_{\alpha},f_{\alpha})(q,p,t,k)d\mu(q,p,t)\right\rVert_{S_{r^{\prime}}}^{r^{\prime}}(2\pi)^{-n}\|k\|_2^n\nonumber\\
			&\geq \sum_{k\in\Z^{m\ast}}\left|\langle(\F f_{\alpha})(k)\Phi_0^{k},\Phi_0^k\rangle_{L^2(\R^n)}\right|^{r^{\prime}}(2\pi)^{-n}\|k\|_2^n\nonumber\\
			&\geq \dfrac{2^{2n(\alpha+1)}}{(2\pi)^{n/2}}\sum_{k_1>\lambda_1}\cdots\sum_{k_m>\lambda_0}\dfrac{M^{(2n+m)r^{\prime}}}{\|(k_1,\cdots,k_m)\|_2^{[(m+n)(\alpha+1)]r^{\prime}}}(2\pi)^{-n}\|(k_1,\cdots,k_m)\|_2^n=\infty, 
		\end{align} when $(m+n)(\alpha+1)r^{\prime}-n\leq m$, i.e., for $\alpha\leq \frac{1}{m+n}\left(\frac{n+m}{r^{\prime}}\right)-1$. The divergence of the inequality \eqref{divergence} is because of Remark \ref{ptest}. Hence for each $r$, $2<r<\infty$, we have an integrable, compactly supported function $f_{\alpha}, \frac{-1}{2}<\alpha\leq\frac{1}{r^{\prime}}-1$ such that 
		$$\sum_{k\in\Z^{m\ast}}\left\lVert\int_{\G}V(f_{\alpha},f_{\alpha})(q,p,t,k)d\mu(q,p,t)\right\rVert_{S_{r^{\prime}}}^{r^\prime}(2\pi)^{-n}\|k\|_2^n=\infty,$$
		which contradict Lemma \ref{contradict}. It completes the proof.
	\end{proof}
	\begin{remark}\label{ptest}
		Let $f(x_1,x_2,\cdots,x_m)=\dfrac{1}{(x_1^2+x_2^2+\cdots+x_m^2)^{p/2}}$, for all $x_i\geq 1, i=1,2,\cdots,m.$ Let $k_1,k_2,\cdots,k_m$ be positive integers. Then 
		\begin{equation}\label{inequality1}
			f(k_1,k_2,\cdots,k_m)\geq f(x_1,x_2,\cdots,x_m),
		\end{equation}
		when $x_i$ are in the closed intervals $ [k_i,k_i+1],i=1,2,\cdots,m$.
		Now taking integration on both side of the inequality \eqref{inequality1} over an rectangle $[k_1,k_1+1]\times [k_2,k_2+1]\times\cdots\times[k_m,k_m+1]$, we obtain
		$$f(k_1,k_2,\cdots,k_m)\geq \int_{k_1}^{k_1+1}\int_{k_2}^{k_2+1}\cdots\int_{k_m}^{k_m+1}f(x_1,x_2,\cdots,x_m)dx_1dx_2\cdots dx_m$$
		Summing the above inequality from $1$ to $\infty$, we get
		\begin{equation}\label{integral}
			\sum_{k_1=1}^{\infty}\sum_{k_2=1}^{\infty}\cdots\sum_{k_m=1}^{\infty} f(k_1,k_2,\cdots,k_m)\geq \int_{1}^{\infty}\int_{1}^{\infty}\cdots\int_{1}^{\infty}f(x_1,x_2,\cdots,x_m)dx_1dx_2\cdots dx_m.
		\end{equation} The right hand side of inequality \eqref{integral} diverges when $p\leq m$. Hence the left hand side series of \eqref{integral} diverges for $p\leq m.$
	\end{remark}
	\begin{obs}
		Here we defined the localization operator with respect to square integrable representation on the reduced Heisenberg group with multi-dimensional center $\G$. And derived the product formula of localization operators using a new convolution defined in \eqref{newconvolution}. But in the case of non-isotropic Heisenberg group with multi-dimensional center $\H^m$ \cite{shahla}, it is impossible to define localization operator, as all the irreducible representations are no more square integrable.
	\end{obs}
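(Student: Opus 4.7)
The plan is a direct verification that the Schr\"odinger representations $\pi_\lambda$ of $\H^m$ fail to be square integrable, as a consequence of the non-compactness of the center $\R^m$. The computation essentially mirrors the proof of square integrability on $\G$ already given above, with the crucial difference that the $t$-integral there runs over the compact torus $\T^m = [0,2\pi]^m$ of finite measure, while on $\H^m$ it runs over $\R^m$.

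First I would write down the matrix coefficient
\begin{equation*}
\langle \varphi, \pi_\lambda(q,p,t)\phi\rangle_{L^2(\R^n)} = e^{-i\lambda\cdot t}\int_{\R^n} \varphi(x) e^{-iq\cdot B_\lambda(x+p/2)} \overline{\phi(x+p)}\, dx,
\end{equation*}
and observe that the $t$-dependence is the unimodular phase $e^{-i\lambda\cdot t}$, so $|\langle \varphi, \pi_\lambda(q,p,t)\phi\rangle_{L^2(\R^n)}|^2$ is independent of $t$. Integrating over $\H^m = \R^n\times\R^n\times\R^m$ therefore factors as $\mathrm{vol}(\R^m)$ times the $(q,p)$-integral, and the latter is exactly the quantity already evaluated in the proof of square integrability on $\G$, namely $|\det B_\lambda|^{-1}(2\pi)^{n}\|\varphi\|_{L^2}^{2}\|\phi\|_{L^2}^{2}$, a finite positive number whenever $\varphi,\phi\neq 0$. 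Since the Lebesgue measure of $\R^m$ is infinite, the matrix coefficient fails to lie in $L^2(\H^m)$, so no admissible vector $\varphi$ exists and the Daubechies-type resolution of the identity
\begin{equation*}
\langle f,g\rangle_{L^2(\R^n)} = \frac{1}{c_\varphi}\int_{\H^m} \langle f,\pi_\lambda(q,p,t)\varphi\rangle \langle\pi_\lambda(q,p,t)\varphi,g\rangle\, dq\, dp\, dt
\end{equation*}
cannot hold with any finite normalization $c_\varphi$.

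Consequently, the construction of the localization operator in Section~\ref{s3}, which requires dividing by $c_\varphi$, breaks down: formally one obtains $c_\varphi=\infty$, making the prescription meaningless. Passing to the quotient $\G=\H^m/H$ with $H=\{(0,0,2\pi t):t\in\Z^m\}$ replaces the central $\R^m$ factor by the compact torus $\T^m$, which restores the finite value $c_\varphi=(2\pi)^{m+1}\|k\|_2^{-n}$ (and forces $\lambda$ to lie in $\Z^{m\ast}$ for $\pi_\lambda$ to descend to $\G$); this is exactly what permits the entire localization-operator machinery developed in Section~\ref{s3}.

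The only subtle point, which is essentially the sole thing requiring attention beyond the computation above, is to observe that the remaining irreducible unitary representations of $\H^m$, namely the one-dimensional characters, factor through the abelianization $\H^m/Z$ and so trivially also fail to be square integrable for lack of any decay in $(q,p,t)$. Hence \emph{no} irreducible unitary representation of $\H^m$ admits an admissible wavelet, and the Daubechies construction of localization operators cannot be carried out on $\H^m$ itself, confirming the observation.
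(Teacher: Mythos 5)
The paper states this Observation without any proof, so there is nothing to compare against line by line; what matters is whether your argument is sound, and it is. Your computation is exactly the paper's own square-integrability calculation for $\G$ (the theorem in Section~\ref{s2}) transplanted to $\H^m$: the modulus of the matrix coefficient $\langle\varphi,\pi_\lambda(q,p,t)\phi\rangle$ is independent of $t$ because the central variable enters only through the unimodular phase $e^{i\lambda\cdot t}$, the $(q,p)$-integral evaluates to the finite positive constant $|\det B_\lambda|^{-1}(2\pi)^n\|\varphi\|_{L^2}^2\|\phi\|_{L^2}^2$, and the remaining $t$-integral is over $\R^m$ rather than $[0,2\pi]^m$, hence diverges. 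This shows no nonzero admissible vector exists for any $\pi_\lambda$, so the normalization $c_\varphi$ in the resolution of the identity is infinite and the localization operator of Section~\ref{s3} cannot be defined. Your additional remark disposing of the one-dimensional characters (constant-modulus matrix coefficients on a non-compact group) is needed to justify the paper's claim that \emph{all} irreducible representations fail, and you supply it correctly. The only caveat worth recording is that this uses square integrability over the full group, not modulo the center (in the latter sense the Schr\"odinger representations would be square integrable); since the wavelet transform and $c_\varphi$ in Section~\ref{s3} are built from an integral over the whole group, your reading is the one the paper intends, and the argument stands.
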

	\begin{obs} If we consider here the non-isotropic Heisenberg group with multi-dimensional center $\H^m$. Then all results about Weyl transform will be coming out as method we followed for the reduced Heisenberg group with multi-dimensional center $\G$. To prove the unboundedness of the Weyl transform on $L^2(\H^m)$ (Theorem \ref{notbouned}), the function defined in
		\eqref{example} will also work in this case,
		for each $r,2<r<\infty$, i.e., there exists a symbol $\sigma$ in $L^r(\H^m\times \R^{m\ast},S_r)$ such that the Weyl transform is unbounded operator on $L^2(\H^m)$.
	\end{obs}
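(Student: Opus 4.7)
The plan is to rerun the proof of Theorem~\ref{notbouned} line by line on $\H^m$, substituting the discrete Plancherel sum $\sum_{k\in\Z^{m\ast}}(2\pi)^{-n}\|k\|_2^n$ with the continuous Plancherel integral $\int_{\R^{m\ast}}(2\pi)^{-n}|\lambda|^n\,d\lambda$ and letting the Schr\"odinger representations be indexed by $\lambda\in\R^{m\ast}$ rather than $k\in\Z^{m\ast}$. The analogues of Definition~\ref{translation}, Definition~\ref{Wigner} and the Weyl transform $W_\sigma$ on $\H^m$ are obvious; Proposition~\ref{Moyal}, Proposition~\ref{propo} and Theorem~\ref{FT} transfer verbatim, because their sole ingredient is the Plancherel formula, which on $\H^m$ has the same shape as Theorem~\ref{Plancherel} (and in fact is simpler, since the $f^0$ correction disappears because the center is $\R^m$ rather than $\T^m$). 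In particular the three auxiliary Lemmas~\ref{uniformbound}, \ref{finite}, \ref{contradict} transfer unchanged: Banach--Steinhaus yields $\|W_\sigma\|\le c\|\sigma\|_{r,\mu\otimes\alpha}$, duality against $\sigma$ gives the mixed-norm bound $\|V(f,g)\|_{r',\mu\otimes\alpha}\le c\|f\|_{L^2(\H^m)}\|g\|_{L^2(\H^m)}$, and Minkowski combined with H\"older in $(q,p,t)$ gives finiteness of the analogue of the outer integral appearing in Lemma~\ref{contradict}.

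For the contradiction I would deploy the very function $f_\alpha$ of \eqref{example}, now supported on $A'=[-T,T]^m\times\{|q_j|\le 1\}\times\{|p_l|\le 1\}$ for any fixed $T>0$; the constraint $-1/2<\alpha\le 1/r'-1$ still ensures $f_\alpha\in L^2(\H^m)$. Following \eqref{w}, the $q_j,p_l$-integrals produce the same Gaussian-Gamma factors $(2/\sqrt{|\lambda|})^{\alpha+1}\,\Gamma((\alpha+1)/2)+o(1)$, while each $t_\iota$-integral, after the rescaling $u=\lambda_\iota t_\iota$, becomes $|\lambda_\iota|^{-(\alpha+1)}\int_0^{T|\lambda_\iota|}u^\alpha\cos u\,du$ and converges as $|\lambda_\iota|\to\infty$ to $|\lambda_\iota|^{-(\alpha+1)}\Gamma(\alpha)\cos(\pi\alpha/2)$. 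Consequently, on the region $\{|\lambda_\iota|>\lambda_0,\ \iota=1,\dots,m\}$ one inherits the lower estimate $|\langle(\F f_\alpha)(\lambda)\Phi_0^\lambda,\Phi_0^\lambda\rangle|\ge C\,|\lambda|^{-(m+n)(\alpha+1)}$, in complete analogy with \eqref{inequality}.

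The main obstacle, and the only genuinely new step, is the divergence check: the discrete $p$-test of Remark~\ref{ptest} must be replaced by its continuous counterpart. Passing to spherical coordinates on $\R^{m\ast}$, one obtains
$$\int_{|\lambda|>R}\frac{|\lambda|^n}{|\lambda|^{(m+n)(\alpha+1)r'}}\,d\lambda\;=\;c\int_R^{\infty} r^{m+n-(m+n)(\alpha+1)r'-1}\,dr\;=\;+\infty$$
exactly when $(m+n)(\alpha+1)r'\le m+n$, that is, $\alpha\le 1/r'-1$. Combined with $\alpha>-1/2$, this range is nonempty precisely when $r>2$, and it produces the contradiction with the $\H^m$-version of Lemma~\ref{contradict} for every $r\in(2,\infty)$; integrability of the integrand near $\lambda=0$ is immaterial, since one needs only divergence of the tail piece. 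This yields the desired symbol $\sigma\in L^r(\H^m\times\R^{m\ast},S_r)$ for which $W_\sigma$ fails to be bounded on $L^2(\H^m)$.
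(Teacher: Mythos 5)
Your proposal is correct and follows exactly the route the paper indicates for this Observation: transfer the whole Weyl-transform machinery by replacing the sum over $k\in\Z^{m\ast}$ with the integral over $\lambda\in\R^{m\ast}$ against $(2\pi)^{-n}|\lambda|^n\,d\lambda$, reuse the same test function $f_\alpha$, and replace the discrete $p$-test of Remark~\ref{ptest} by the continuous tail integral. The paper offers no further detail for this remark, and your divergence condition $(m+n)(\alpha+1)r'\le m+n$, which is compatible with $\alpha>-1/2$ precisely when $r>2$, is the correct continuous analogue.
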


\end{document}